\begin{document}

\title{Accelerated Primal-Dual Mirror Dynamics for Centrailized and Distributed Constrained Convex Optimization Problems}

\author{\name You Zhao \email zhaoyou1991sdtz@163.com \\
       \addr College of Computer Science\\
       Chongqing University\\
        Chongqing 400044, PR. China
       \AND
       \name Xiaofeng Liao$^*$ \email xfliao@cqu.edu.cn \\
       \addr College of Computer Science\\
       Chongqing University\\
        Chongqing 400044, PR. China
        \AND
       \name Xing He \email hexingdoc@swu.edu.cn \\
       \addr School of Electronics and Information Engineering\\
       Southwest University\\
        Chongqing 400715, PR. China
          \AND
       \name Chaojie Li \email chaojie.li@unsw.edu.au\\
       \addr School of Electrical Engineering and Telecommunications\\
       niversity of New South Wales\\
        Kensington, NSW 2052, Australia
        }
\editor{My editor}

\maketitle

\begin{abstract}
	This paper investigates two accelerated primal-dual mirror dynamical approaches for smooth and nonsmooth convex optimization problems with affine and closed, convex set constraints. In the smooth case, an accelerated primal-dual mirror dynamical approach (APDMD) based on accelerated mirror descent and primal-dual framework is proposed and accelerated convergence properties of primal-dual gap, feasibility measure and the objective function value along with trajectories of APDMD are derived by the Lyapunov analysis method. Then, we extend APDMD into two distributed dynamical approaches to deal with two types of distributed smooth optimization problems, i.e., distributed constrained consensus problem (DCCP) and distributed extended monotropic optimization (DEMO) with accelerated convergence guarantees. Moreover, in the nonsmooth case, we propose a smoothing accelerated primal-dual mirror dynamical approach (SAPDMD) with the help of smoothing approximation technique and the above APDMD. We further also prove that primal-dual gap, objective function value and feasibility measure of SAPDMD have the same accelerated convergence properties as APDMD by choosing the appropriate smooth approximation parameters. Later, we propose two smoothing accelerated distributed dynamical approaches to deal with nonsmooth DEMO and DCCP to obtain accelerated and efficient solutions. Finally, numerical experiments are given to demonstrate the effectiveness of the proposed accelerated mirror dynamical approaches.
\end{abstract}

\begin{keywords}
   machine learning, accelerated mirror dynamical approaches, smooth and nonsmooth optimization, constrained optimization, smoothing approximation, distributed approaches
\end{keywords}

\section{Introduction}

\subsection{Problem statement}\label{sec:Problem}
\label{sec:Problem}
In this paper, we consider the following convex constrained optimization problem: 
\begin{equation}\label{P1}
	\underset{x\in \mathbb{R}^n}{\min}\,\,f\left( x \right),\,\, \mathrm{s}.\mathrm{t}. \,\, Ax=b, x\in \mathcal{X},
\end{equation}
where 
\begin{equation}\label{Assume}
	\begin{cases}
		\mathcal{X} \,\,\mathrm{is} \,\,\mathrm{a}\,\,\mathrm{closed} \,\,\mathrm{and} \,\,\mathrm{convex} \,\,\mathrm{set},
		\\
		f:\mathcal{X} \rightarrow \mathbb{R}\,\,\mathrm{may} \,\,\mathrm{be}\,\,\mathrm{a}\,\,\mathrm{smooth} \,\,\mathrm{or} \,\,\mathrm{nonsmooth} \,\,\mathrm{convex} \,\,\mathrm{function},\\
		A: \mathcal{X} \rightarrow \mathbb{R}^m \,\,\mathrm{is} \,\,\mathrm{a}\,\, \mathrm{continuous} \,\,\mathrm{linear} \,\,\mathrm{operator}\,\, \mathrm{and}\,\, b\in \mathbb{R}^m,\\
		\mathrm{The} \,\,\mathrm{optimal}\,\,\mathrm{solution} \,\,\mathrm{set} \,\,\mathrm{of} \,\,\mathrm{problem}\,\,\mathrm{\eqref{P1}}  \,\,\mathrm{is} \,\,\mathrm{non}\mathrm{empty}.                     \\
	\end{cases} 
\end{equation}
This problem covers many optimization problems in various applied fields such as machine learning, sparse signal reconstruction, image deblurring, resource allocation, saddle point, Markov decision processes, regularized empirical risk minimization, and supervised machine learning. (see, e.g. \cite{Boyd2011DistributedOA, lin2020accelerated, Darbon2021AcceleratedNP, OConnor2014PrimalDualDB, Nandwani2019APD, Zhang2017, Tiapkin2022PrimalDualSM}).


\subsection{Historical presentation} The inertial (scend-order or accelerated) dynamical approaches are increasingly popular for solving the unconstrained optimization problem
\begin{equation} \label{uncon_optimization}
	\underset{x\in \mathbb{R}^n}{\min}  f\left( x \right).
\end{equation}

To deal with problem \eqref{uncon_optimization}, \cite{Polyak1964SomeMO} first proposed the heavy ball with friction dynamical approach
\begin{equation} \label{heavy_ball}
	(\mathrm{HBF}) \,\,\,\,\ddot{x}\left( t \right) +\eta \dot{x}\left( t \right) +\nabla f\left( x \right) =0,
\end{equation}
where $\eta>0$ is a damping parameter. Later, \cite{Alvarez2000OnTM} studied the asymptotic behavior of \eqref{heavy_ball} with a time independent parameter $\eta$ when $f\left( x \right)$ is convex.  \cite{Jendoubi2015OnDS} studied some convergence properties of the HBF \eqref{heavy_ball} with a constant $\eta$ when $f\left( x \right)$ is a nonconvex function. \cite{Aujol2022ConvergenceRO, Aujol2020ConvergenceRO} investigated the convergence rate of HBF dynamical system and its corresponding discrete algorithm with a fixed parameter $\eta$ under the condition that $f\left( x \right)$ satisfies quasi-strongly convex and Lojasiewicz properties, respectively. \cite{Su2016ADE} first revealed that the HBF \eqref{heavy_ball} with $\eta =\frac{\alpha}{t},\alpha \geq 3$ can be regarded as the continuous-time limit of the Nesterov's accelerated gradient algorithm and it has an accelerated convergence rate, i.e, $f\left( x\left( t \right) \right) -\min f\left( x \right) =O\left( \frac{1}{t^2} \right)$. \cite{Elloumi2017AsymptoticFA} further improved the convergence rate of it to $f\left( x\left( t \right) \right) -\min f\left( x \right) =o\left( \frac{1}{t^2} \right)$. When $\eta =\frac{\alpha}{t}, 0<\alpha \leq3$, the convergence rate  $f\left( x\left( t \right) \right) -\min f\left( x \right) =O\left( t^{-\frac{2\alpha}{3}} \right) $ was estimated by \cite{Attouch2019RateOC} and \cite{Apidopoulos2018TheDI} for smooth and nonsmooth convex functions $f\left( x \right)$. \cite{Cabot2012AsymptoticsFS} studied the asymptotic behavior of the HBF \eqref{heavy_ball} when $\eta =\frac{\alpha}{t^{\gamma}},0<\gamma <1$. Moreover, \cite{Wibisono2016AVP} studied a series of accelerated (inertial) dynamical approaches to the problem \eqref{uncon_optimization} by the Bregman Lagrangian based on the calculus of variations. \cite{Kovachki2021ContinuousTA} studied the behavior of momentum methods for solving problem \eqref{uncon_optimization} from a continuous-time perspective.
\cite{wilson2021} proposed several Lyapunov functions for analyzing the accelerated (momentum) algorithms to solve the problem \eqref{uncon_optimization}. By the manifold with curvature bounded from below, \cite{Alimisis2020ACP} proposed a Riemannian variant of accelerated gradient dynamical approach.

Recently, to solve the problem \eqref{uncon_optimization} with a closed and convex set constraint $\mathcal{X}$, i.e., 
\begin{equation} \label{set_optimization}
	\underset{x\in \mathbb{R}^n}{\min} f\left( x \right),\,\, \mathrm{s}.\mathrm{t}. \,\, x\in \mathcal{X} \subset \mathbb{R} ^n,
\end{equation}
many inertial dynamical approaches have been studied. Based on the projection operators, \cite{he2016inertial} proposed an inertial dynamical approach to solve the problem \eqref{set_optimization} with non-convex objective functions. Later, based on the work in \cite{he2016inertial} and smoothing approximation technique, two smoothing inertial hydrodynamic  approaches were proposed to solve constrained nonconvx $L_{p-q}$ minimization problem to reconstruct the sparse signal in \cite{Zhao2018SmoothingIP}. Combining the dynamical approach of mirror descent with the continuous version of Nesterov's acceleration algorithm, \cite{Krichene2015AcceleratedMD} proposed an accelerated mirror dynamical approach for problem \eqref{set_optimization} as follows: 
\begin{equation}\label{mirror}
	\begin{cases}
		\dot{X}=\frac{\gamma}{t}\left( \nabla \psi ^*\left( Z \right) -X \right),\\
		\dot{Z}=-\frac{t}{\gamma}\nabla f\left( X \right), \,\,      \\
		X\left( 0 \right) =x_0, Z\left( 0 \right) =z_0, \mathrm{with} \nabla \psi ^*\left( z_0 \right) =x_0,\\
	\end{cases}	
\end{equation}
where $\gamma \geq 2$ and $\nabla \psi ^*$ is the gradient of conjugate of $\psi$ (see \eqref{mirr} ). 

In addition, in order to solve the problem \eqref{uncon_optimization} with an affine constraint, i.e, 
\begin{equation} \label{aff_optimization}
	\underset{x\in \mathbb{R} ^n}{\min}\,\,  f\left( x \right),\,\, \mathrm{s}.\mathrm{t}. \,\, Ax=b,
\end{equation}
many inertial dynamical approaches under primal-dual framework were extensively investigated. \cite{Zeng2019DynamicalPA} first proposed a second-order dynamical primal-dual approach for solving the problem \eqref{aff_optimization} and proved that the convergence rate of the gap between the Lagrangian function and its optimal value is $O\left( \frac{1}{t^2} \right) $. The corresponding inertial dynamical approach is then extended to solve the distributed optimization problems. Later, \cite{Bo2021ImprovedCR} improved the convergence rates of the works in \cite{Zeng2019DynamicalPA} and provided the weakly convergence analysis to a primal-dual optimal solution of the problem \eqref{aff_optimization}. \cite{He2021ConvergenceRO} studied an inertial primal-dual dynamical approach without/with perturbations for separable convex optimization problems with an affine constraint. \cite{Attouch2021FastCO} proposed a temporally rescaled inertial augmented Lagrangian system (TRIALS) with three time-varying parameters (i.e., viscous damping, extrapolation and temporal scaling) to address separable smooth/nonsmooth convex optimization problems with an affine constraint, and presented the fast convergence properties of TRIALS. In addition, \cite{Luo2021AcceleratedPM} further proposed a ``second-order"+``first-order" primal-dual dynamical approaches for solving problem \eqref{aff_optimization} with accelerated convergence guarantees.

However, many practical problems in machine learning, sparse signal reconstruction, image deblurring, resource allocation and so on , not only have affine constraints, but also have set constraints, as in problem \eqref{P1}. The problem \eqref{P1} can be regarded as a generalization for the problems \eqref{uncon_optimization}, \eqref{set_optimization}, \eqref{aff_optimization}, \eqref{P2} and \eqref{P3}. In recent years, some first order dynamical approaches  based on primal-dual framework and projection were proposed by \cite{Liu2017ACN,  Qu2019OnTE, Yi2016InitializationfreeDA, zeng2018distributed} to solve the problem \eqref{P1} only with convergence analysis or ergodic $O\left( \frac{1}{t} \right) $ convergence rate. To the best of our knowledge, accelerated (inertial) dynamical approaches based on primal-dual framework with non-ergodic $O\left( \frac{1}{t^2} \right) $ convergence rates are rarely involved. It is worth noting that the inertial dynamical approaches mentioned above were used for solving the problems \eqref{uncon_optimization}, \eqref{set_optimization} and \eqref{aff_optimization} and they cannot be used directly for the problem \eqref{P1}. 

This paper aims to investigate accelerated primal-dual dynamical approaches based on mirror descent and smoothing approximation methods for solving problem \eqref{P1} with an accelerated non-ergodic convergence rate $O\left( \frac{1}{t^2} \right)$. Our contributions are summarized as follows:
\begin{itemize}
	\item
	For the problem \eqref{P1} in the smooth case, an accelerated primal-dual mirror dynamical approach (APDMD) is proposed for the first time, it is used to solve problem \eqref{P1} with accelerated convergence rates of Lagrangian and objective functions? gaps. We provide an interpretation of the APDMD from different perspectives (i.e., neurodynamic approach, Hamilton's system, game theory and control theory). Moreover, based on the properties of conjugate function and Cauchy-Lipschitz-Picard theorem, the feasibility (i.e, ensuring that the trajectories of solutions always satisfy the set constraint), existence and uniqueness of the global solution of APDMD are obtained. Last, applying the APDMD to DCCP and EDMO leads to two distributed APDMDs (ADPDMD and ADMD) with accelerated convergence guarantees.

	\item  For the problem \eqref{P1} in the nonsmooth case, a smoothing accelerated primal-dual mirror dynamical approach (SAPDMD) for the problem \eqref{P1} is also proposed based on the smoothing approximation technique and APDMD, and it has accelerated convergence rates of Lagrangian and objective function gaps. We provide a comparative explanation between SAPDMD and state-of-the-art dynamical approaches besed on differential inclusion, Moreau-Yosida regularization and directional derivative methods. Moreover, we further analyze the convergence properties of SAPSMD and provide smooth parameter selection condition. Last, applying the SAPDMD to DCCP and EDMO leads to two distributed SAPDMDs (SADPDMD and SADMD) with accelerated convergence gurantees.
	
	\item 	
	Last but not least, the asymptotic analysis and the obtained results in this paper can be straight forwardly transferred to inertial dynamical approaches for the problems \eqref{uncon_optimization}, \eqref{set_optimization} and \eqref{aff_optimization}. 
\end{itemize}

The paper is organized as follows. Section~2 introduces some preliminaries of convex analysis, saddle point theorem, dual distance, Bregman divergence, projection operators, graph theory and smoothing approximation. Then, we propose the APDMD to solve problem \eqref{P1} in the smooth case, and discuss some convergence properties of APDMD in Section~3. In Section~4, we propose a SAPDMD (smoothing version of APDMD) for solving the problem \eqref{P1} in the nonsmooth case, and provide a detailed discussion of the convergence properties of SAPDMD and then extend the SAPDMD to address the DCCP and EDMO in the nonsmooth case. Section~5 provides several experiments to demonstrate the theoretical results. Finally, we conclude this paper in Section~6.

\section{Preliminaries}\label{sec:Mathpr} 
This section gives some essential mathematical preliminaries.

\subsection{Convex analysis} 
A function $f:\mathcal{X} \rightarrow \mathbb{R}$ is convex, if it satisfies $ \theta f\left( x \right) +\left( 1-\theta \right) f\left( z \right) \ge f\left( \theta x+\left( 1-\theta \right) z \right),\forall x,z\in \mathcal{X}, x\ne z$, and $\theta \in \left[0,1 \right] $.
The subdifferential $g_{f}\left( x \right)$ of $f\left( x \right)$ with respect to $x \in \mathcal{X}$ is defined by $g_f\left( x \right) =\left\{ p\in \mathbb{R} ^n|f\left( z \right) -f\left( x \right) \ge p^T \right. \left. \left( z-x \right) ,\forall x,z\in \mathcal{X} \right\} $, and the element $\partial f\left( x \right) $ of $g_{f}\left( x \right)$ is called subgradient of $f\left( x \right)$. In addition, if $f$ is smooth, the subgradient $\partial f\left( x \right) $ reduces to gradient $\nabla f\left( x \right) $. 

\subsection{Saddle point Theorem}\label{saddle}
The (augmented) Lagrangian $\mathcal{L}_{\beta}$ $:$ $\mathcal{X} \times \mathbb{R} ^m \rightarrow \mathbb{R}$ with respect to the problem \eqref{P1} with $\beta \geq0$ is defined by 
\begin{equation}\label{AugLag}
	\mathcal{L} _{\beta}\left( x,\lambda \right) =f\left( x \right) +\lambda ^T\left( Ax-b \right) +\frac{\beta}{2}\left\| Ax-b \right\| ^2, 
\end{equation}
Then, $\left( x^*,\lambda ^* \right) $ is an optimal solution of the problem \eqref{P1} if and only if it is a saddle point of $\mathcal{L} _{\beta}$, i.e.,
\begin{equation}\label{sd}
	\mathcal{L}_{\beta}\left( x^*,\lambda \right) \leq \mathcal{L} _{\beta}\left( x^*,\lambda ^* \right) \leq \mathcal{L} _{\beta}\left( x,\lambda ^* \right) , \forall \left( x,\lambda \right) \in \mathcal{X} \times \mathbb{R}^m.
\end{equation}

\subsection{Dual distance and Bregman divergence}\label{mirr} 
Define  $\psi: \mathcal{X} \rightarrow \mathbb{R}$ and let $\mathcal{X} $ be a closed and convex set , then, by the \emph{Legendre-Fenchel transform}, the conjugate function of $\psi$ is 
\begin{equation*}
	\begin{split}
		\psi ^*\left( u \right) =\underset{x\in \mathcal{X}}{\mathrm{sup}}\left\{ u^Tx-\psi \left( x \right) \right\}.
	\end{split}
\end{equation*}
If $\psi \left( x \right) $ is a proper, lower semi-continuous and convex function, then for all  $u\in \mathcal{X} ^*$, we have
\begin{equation*}
	\begin{split}
		\psi ^{\textsc{**}}\left( u \right) =\underset{u\in \mathcal{X} ^*}{\mathrm{sup}}\left\{ x^Tu-\psi ^*\left( u \right) \right\} =\psi \left( x \right),
	\end{split}
\end{equation*}
from the Fenchel's duality theorem.

Assume that $\psi$ and $\psi ^*$ are proper and convex functions, then they're subdifferentiable, i.e., $\partial \psi \left( x \right)$, $\partial \psi ^* \left( u\right)$ exist in the relative interior of their domains $\mathcal{X}$, $\mathcal{X} ^*$, respectively \cite{Rockafellar1970ConvexA}. In addition, using the condition that $\psi ^{**}\left( u \right) =\psi \left( x \right) $, one has 
\begin{equation*}
	\begin{split}
		\partial \psi ^* \left( u \right) +\partial \psi \left( x \right) =u^Tx\Leftrightarrow u\in \partial \psi \left( x \right) \Leftrightarrow x\in \partial \psi ^*\left( u \right),		
	\end{split}
\end{equation*}
which implies $\partial \psi ^*\left( u \right) =arg\underset{x\in \mathcal{X}}{\max}\left\{ u^Tx-\psi \left( x \right) \right\}$. Since $\mathrm{dom}\psi =\mathcal{X} $,  then $\partial \psi ^*\left( u \right) \subset \mathcal{X} $ , i.e., the set-valued mapping $\partial\psi ^*$ can map $\mathcal{X} ^*$ into $\mathcal{X}$. In order to make $\partial \psi ^*\left( u \right) \subset \mathcal{X} $ be unique, i.e., it maps to a single-value, in other words, the $\psi ^*\left( u \right) $ is differentiable for any $u\in \mathcal{X} ^*$, the following definitions are needed (see \cite{krichene2016lyapunov}).
\begin{definition}
	A convex function $\psi $ is cofinite if its epigraph does not consist of any non-vertical half-line.
\end{definition}
\begin{definition}
	A convex function $\psi $ is essentially strictly convex if it is strictly convex and subdifferentiable on all convex subsets.
\end{definition}
\begin{lemma}
	If  $\psi$ and $\psi ^*$ are proper, convex, and closed, so they are inverses of each other, then $\psi ^*$ is finite and differentiable on $\mathcal{X} ^*$ if and only if $\psi $ is essentially strictly convex and cofinite.
\end{lemma}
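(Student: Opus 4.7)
The plan is to reduce the lemma to two classical Legendre-type dualities from Rockafellar's convex analysis, applied in sequence. Since the hypothesis already gives that $\psi$ and $\psi^*$ are proper, convex, closed, and mutual conjugates, I can freely use the biconjugate identity $\psi^{**}=\psi$ and the subdifferential inversion $u\in\partial\psi(x)\Leftrightarrow x\in\partial\psi^*(u)$ recorded earlier in the excerpt.

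First I would handle the ``finite'' half of the conclusion. The statement that $\psi^*$ is finite on $\mathcal{X}^*$ is the statement that $\mathrm{dom}\,\psi^* = \mathcal{X}^*$. Using the standard recession-function duality, the effective domain of $\psi^*$ equals the whole dual space if and only if $\psi$ has no non-trivial recession directions in the affine sense, which is exactly the cofiniteness condition that $\mathrm{epi}\,\psi$ contains no non-vertical half-line. This establishes the equivalence ``$\psi^*$ finite on $\mathcal{X}^*$'' $\iff$ ``$\psi$ cofinite'' independently of any differentiability consideration.

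Next I would handle the ``differentiable'' half. By the formula already derived in the excerpt, $\partial\psi^*(u)=\arg\max_{x\in\mathcal{X}}\{u^T x-\psi(x)\}$, and differentiability of the convex function $\psi^*$ at $u$ is equivalent to $\partial\psi^*(u)$ being a singleton. Hence differentiability of $\psi^*$ everywhere on $\mathcal{X}^*$ is equivalent to uniqueness of the conjugate maximizer for every $u$. I would argue that this uniqueness is precisely essential strict convexity of $\psi$: strict convexity on every convex subset of $\mathrm{dom}\,\partial\psi$ forbids the flat pieces in $\psi$ that would produce multiple maximizers, and conversely any essentially strictly convex $\psi$ yields a unique maximizer in the conjugate problem. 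Combining the two equivalences gives the lemma; a compact way to cite both in one stroke is Rockafellar's Theorem 26.3, which packages exactly this Legendre-type correspondence between $\psi$ being essentially strictly convex and cofinite on one side, and $\psi^*$ being essentially smooth and finite on the other.

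The main obstacle is upgrading from pointwise single-valuedness of $\partial\psi^*$ to genuine differentiability on \emph{all} of $\mathcal{X}^*$. A priori, single-valuedness of a subdifferential only yields G\^ateaux differentiability on the interior of the effective domain, so I have to combine (a) cofiniteness of $\psi$, which forces $\mathrm{dom}\,\psi^*=\mathcal{X}^*$ and reduces the interior question to the topology of $\mathcal{X}^*$ itself, with (b) the automatic continuity of $\nabla\psi^*$ on the interior of its domain for closed convex functions, to conclude differentiability everywhere on $\mathcal{X}^*$ as required for the mirror-descent construction in \eqref{mirror}.
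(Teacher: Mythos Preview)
The paper does not supply its own proof of this lemma: it is stated in the preliminaries as a background fact (with an implicit pointer to \cite{krichene2016lyapunov} and, ultimately, Rockafellar) and is used only to justify that $\nabla\psi^*$ is a well-defined single-valued mirror map. So there is no in-paper argument to compare against.

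Your proposal is the standard route and is correct. Splitting the biconditional into (i) ``$\mathrm{dom}\,\psi^*=\mathcal{X}^*$ $\Leftrightarrow$ $\psi$ cofinite'' via recession/asymptotic duality and (ii) ``$\psi^*$ differentiable $\Leftrightarrow$ $\psi$ essentially strictly convex'' via the conjugate-maximizer uniqueness is exactly how one unpacks the Legendre-type correspondence; invoking Rockafellar's Theorem~26.3 (together with the cofiniteness/domain result, e.g.\ Corollary~13.3.1) is the appropriate citation. Your caveat about upgrading pointwise single-valuedness of $\partial\psi^*$ to genuine differentiability is well taken and you resolve it correctly: once cofiniteness of $\psi$ forces $\mathrm{dom}\,\psi^*$ to be all of $\mathcal{X}^*$, the standard fact that a proper closed convex function with single-valued subdifferential on the interior of its domain is continuously differentiable there (Rockafellar, Theorem~25.1) finishes the job.
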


\begin{lemma} \textbf{Bregman divergence}: The distance between $h$ at $y \in \mathcal{X} $ and its first-order Taylor series approximation at $z \in \mathcal{X}$ is given by : $$D_h\left( y,z \right) =h\left( y \right) -h\left( z \right) -\nabla h\left( z \right) ^T\left( y-z \right) , \forall y, z\in \mathcal{X},$$
	which is nonnegative if $h$ is convex and is an approximation to the Hessian metric, i.e., $D_h\left( y,z \right) \approx \frac{1}{2}\left( y-z \right) ^T\nabla ^2h\left( x \right) \left( y-x \right), \forall y, z \in \mathcal{X} $ when $y$ is close to $z$. 
\end{lemma}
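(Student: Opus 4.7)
The plan is to treat the displayed identity $D_h(y,z)=h(y)-h(z)-\nabla h(z)^T(y-z)$ as the definition of the Bregman divergence and verify the two substantive claims attached to it: nonnegativity when $h$ is convex, and the second-order local approximation by the Hessian quadratic form. Both reduce to standard one-line arguments once the right classical fact is invoked.

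For the nonnegativity assertion, I would invoke the first-order characterization of differentiable convex functions, namely $h(y)\ge h(z)+\nabla h(z)^T(y-z)$ for all $y,z\in\mathcal X$, which is equivalent to convexity of $h$ on the convex set $\mathcal X$. Rearranging this inequality yields $D_h(y,z)\ge 0$ directly. If one additionally knows that $h$ is strictly convex, the inequality is strict whenever $y\ne z$, which is worth flagging because the rest of the paper uses $D_h$ as a quasi-distance inside Lyapunov constructions.

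For the Hessian-approximation claim, I would expand $h$ to second order about $z$ using Taylor's theorem, assuming $h\in C^2$ in a neighborhood of $z$ (the only regularity actually used):
$h(y)=h(z)+\nabla h(z)^T(y-z)+\tfrac{1}{2}(y-z)^T\nabla^2 h(z)(y-z)+o(\|y-z\|^2)$.
Substituting this into the definition of $D_h(y,z)$ and cancelling the constant and linear pieces gives $D_h(y,z)=\tfrac{1}{2}(y-z)^T\nabla^2 h(z)(y-z)+o(\|y-z\|^2)$, which is the asserted approximation. The $\nabla^2 h(x)$ appearing in the lemma statement is evidently a typo for $\nabla^2 h(z)$; replacing it by $\nabla^2 h(y)$ changes the quadratic form only by $o(\|y-z\|^2)$ by continuity of the Hessian, so the statement is insensitive to that choice.

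There is no genuinely hard step; the only obstacle is regularity bookkeeping. The defining formula requires $\nabla h(z)$ to exist, and the Taylor-expansion step requires $h$ to be $C^2$ near $z$. These hypotheses are consistent with the preceding lemma's setup of an essentially strictly convex, cofinite $\psi$ whose conjugate $\psi^*$ is differentiable on $\mathcal X^*$, so no additional assumptions need to be imposed beyond what the paper already carries.
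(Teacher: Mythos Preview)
Your proposal is correct and complete. The paper itself states this lemma as a preliminary definition without proof, so there is no ``paper's own proof'' to compare against; your two one-line arguments (first-order convexity inequality for nonnegativity, second-order Taylor expansion for the Hessian approximation) are exactly the standard justifications one would supply, and your observation that $\nabla^2 h(x)$ should read $\nabla^2 h(z)$ is also on point.
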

\subsection{Projection operators} \label{proj} Define the projection operator of a closed and convex set $\mathcal{X}$ at $x$  be $P_{\mathcal{X}}\left( u \right) =\arg\underset{z\in \mathcal{X}}{\min}\left\| u-z \right\| $.
  A basic property of the projection operator $P_{\mathcal{X}}$ is 
  \begin{align*}
	\left( P_{\mathcal{X}}\left( x \right) -z \right) ^T\left( P_{\mathcal{X}}\left( x \right) -x \right) \leq 0,\forall x\in \mathbb{R}^n, z\in \mathcal{X}.
	 \end{align*}
\begin{lemma} 
	
	1): If $\mathcal{X}$ is a \textbf{box} set, i.e. $\mathcal{X}=\left\{ u\in \mathbb{R}^n\mid\underline{u}_i\le u_i\le \bar{u}_i,\,i=1,...,n \right\}$, then, for any $i$, we have 
	$$P_{\mathcal{X}}\left(u_i\right)=\begin{cases}
			\underline{u}_i,&		u_i<\underline{u}_i,\\
			u_i,&		\underline{u}_i\le u_i\le \bar{u}_i,\\
			\bar{u}_i,&		u_i>\bar{u}_i.\\
		\end{cases}$$
		
		2): If $\mathcal{X}$ is a \textbf{Sphere} set, i.e., $\mathcal{X}=\left\{ u\in \mathbb{R}^n\mid \left\| u-z \right\| \le r, z\in \mathbb{R}^n, r>0 \right\} $, then, the projection operator is 
		$P_{\mathcal{X}}\left( u \right) =\begin{cases}
			u,&		\mathrm{if}\,\,\,\left\| u-z \right\| \le r,\\
			z+\frac{r\left( u-z \right)}{\left\| u-z \right\|},&		\mathrm{if}\,\,\,\left\| u-z \right\| >r.\\
		\end{cases}$
		
		3): If $\mathcal{X}$ is an \textbf{affine} set, i.e., $\mathcal{X} =\left\{ u\in \mathbb{R}^n\mid Au=b, A\in \mathbb{R}^{m\times n} \right\} $, then, the projection operator is $P_{\mathcal{X}}\left( u \right) =u+A^{\dagger}\left( b-Au \right)$, where $A^{\dagger}$ 
		is Moore-Penrose pseudoinverse of $A$ if $\mathrm{Rank}\left( A \right) <m$, and  $A^{\dagger}=A^T\left( AA^T \right) ^{-1}$ when $\mathrm{Rank}\left( A \right) =m$. 
		
		4): If  $\mathcal{X}$ is a \textbf{half-space} set, i.e., $\mathcal{X}=\left\{ \begin{array}{c}	u\in \mathbb{R}^n\\ \end{array} \right. \mid \left. a^Tu\leq b,a\ne 0 \right\} $, then, the projection operator is $P_{\mathcal{X}}\left( u \right) =\begin{cases}
			u+\frac{b-a^Tu}{\left\| a \right\| ^2}a,&		\mathrm{if}\,\,a^Tu>b,\\
			u,&		\mathrm{if}\,\,a^Tu\leq b.\\
		\end{cases}$
		
		For more information regarding the projection operators, (please refer to \cite{Bauschke2011ConvexAA, Parikh2014ProximalA}).
	\end{lemma}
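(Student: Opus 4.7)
The plan is to verify each of the four cases using the same underlying principle: $P_{\mathcal{X}}(u)$ is by definition the unique minimizer of $\tfrac{1}{2}\|u-z\|^2$ over $z\in\mathcal{X}$, and can be characterized either by the variational inequality $(P_{\mathcal{X}}(u)-z)^T(P_{\mathcal{X}}(u)-u)\le 0$ stated just before the lemma, or by the KKT conditions associated with the corresponding constrained quadratic program. For each set I would propose a candidate formula and verify one of these two equivalent optimality conditions.

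For the box case, I would exploit separability: both the objective $\tfrac{1}{2}\sum_i (u_i-z_i)^2$ and the constraint $\underline{u}_i\le z_i\le \bar u_i$ decouple across coordinates, so the projection reduces to $n$ one-dimensional minimizations. For each coordinate the minimizer of $(u_i-z_i)^2$ on $[\underline{u}_i,\bar u_i]$ is clearly the clipped value given in the lemma, which I would justify by a two-line case analysis on the sign of the derivative at the unconstrained optimum. For the sphere case, I would split on whether $u$ lies inside the ball: if $\|u-z\|\le r$ then $u\in\mathcal{X}$ minimizes trivially; otherwise, using the form of $\mathcal{X}$ after translating by $z$, the projection must lie on the sphere $\{w:\|w-z\|=r\}$ and the candidate $z+\tfrac{r(u-z)}{\|u-z\|}$ can be plugged into the variational inequality to conclude, or equivalently its optimality can be read off from the Lagrangian $\tfrac{1}{2}\|u-w\|^2+\mu(\|w-z\|^2-r^2)$.

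For the affine and half-space cases I would use Lagrangian duality. For the affine set $\{w:Aw=b\}$, the KKT conditions for minimizing $\tfrac{1}{2}\|u-w\|^2$ subject to $Aw=b$ give $w=u-A^T\mu$ with $Aw=b$, so $\mu$ must satisfy $AA^T\mu=Au-b$. When $\mathrm{Rank}(A)=m$, $AA^T$ is invertible and substitution yields $w=u-A^T(AA^T)^{-1}(Au-b)=u+A^{\dagger}(b-Au)$. For the half-space, I would split on whether the constraint $a^Tu\le b$ is already satisfied; if it is, $u$ is feasible and optimal, otherwise the constraint is active and the KKT conditions give $u-w=\lambda a$ with $a^Tw=b$, from which $\lambda=(a^Tu-b)/\|a\|^2$ and the displayed formula follows immediately.

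The only step that requires genuine care is the affine case when $A$ is rank-deficient. There $AA^T$ is singular, so the Lagrange multiplier $\mu$ is not unique; the claim is that the projection is still given by the same expression with $A^{\dagger}$ now denoting the Moore--Penrose pseudoinverse. I would handle this by invoking the standard identities $A A^{\dagger} A=A$ and $(I-A^{\dagger}A)$ being the orthogonal projector onto $\ker A$, and verifying directly that $w=u+A^{\dagger}(b-Au)$ satisfies $Aw=b$ (using feasibility of the problem, i.e.\ $b\in\mathrm{Range}(A)$) and that $u-w=-A^{\dagger}(b-Au)\in\mathrm{Range}(A^T)$, which is precisely the orthogonality condition characterizing the projection onto the affine subspace. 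This pseudoinverse step is the only non-routine obstacle; the remaining verifications are straightforward one-line checks.
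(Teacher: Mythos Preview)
Your proof outline is correct and self-contained. The paper, however, does not prove this lemma at all: it simply states the four formulas as standard facts and refers the reader to \cite{Bauschke2011ConvexAA, Parikh2014ProximalA} for details. So there is no ``paper's own proof'' to compare against; your KKT/variational-inequality verification is a genuine proof where the paper offers only a citation, and in particular your careful handling of the rank-deficient affine case via the Moore--Penrose identities $AA^{\dagger}A=A$ and $\mathrm{Range}(A^{\dagger})=\mathrm{Range}(A^T)$ goes beyond anything the paper supplies.
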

	\subsection{Graph Theory} 
	An undirected communication topology graph is a triplet $\mathcal{G}=\left(\mathcal{V},\mathcal{E},\mathcal{A} \right)$ with node set $\mathcal{V}=\left\{ \nu_1, \nu_2,..., \nu_n\right\}$, edge set $\mathcal{E}\subseteq \mathcal{V}\times \mathcal{V}$ and connection matrix $\mathcal{A}=\left\{\mathrm{a}_{ij} \right\} _{n\times n}$ with nonnegative elements $\mathrm{a}_{ij}=\mathrm{a}_{ji}>0$ if $\left( i,j \right) \in \mathcal{E}$, and $\mathrm{a}_{ij}=\mathrm{a}_{ji}=0$ otherwise. The coupling of agents in an undirected graph is unordered, which means that there exists information exchange for both agent $i$ and agent $j$. A path in an undirected graph between agent $i$ and agent $j$ is a sequence of edges of the form $\left( i,i_1 \right) $, $\left(i_1,i_2 \right) $, $\dots$, $\left( i_s,j\right) $, where $i$, $i_1$, $\cdots $, $i_s$, $j$ denote different agents. Let $\mathcal{N}_i=\left\{ j|\left( i,j \right) \in \mathcal{E} \right\}$ be an agent $i$'s neighbor set. The undirected graph $\mathcal{G}$ is connected if there exists a path between any pair of distinct nodes $v_i$ and $v_j$ $(i,j=1,2,...,n)$ (see \cite{Godsil2001AlgebraicGT}).
	
	\subsection{Smoothing approximation} \label{SA} The main characteristic of the smoothing method is to approximate the nonsmooth function with a parameterized smoothing function. In this paper, we adopt a smoothing function, which is defined as follows:
	
	\begin{definition} 
		Let $ \hat{f}: \mathcal{X} \times \left [0, \bar{\mu}\right] \rightarrow \mathbb{R}$ with $\bar{\mu}>0$ be a smoothing function of the convex function $f:\mathcal{X}\rightarrow \mathbb{R}$, if $\hat{f}\left( \cdot ,\mu \right) $ is continuous differentiable for any fixed $\bar{\mu}\ge\mu>0$, then it enjoys the following properties ( see \cite{Bian2020ASP,Chen2012SmoothingMF} \label{Defi_sm}  )
		
		(i) (approximation property) $\underset{x\rightarrow w,\mu \rightarrow 0}{\lim}\hat{f}\left( x ,\mu \right) =f\left( w \right), \forall \,\,x\in \mathcal{X}$;
		
		(ii) (convexity) For any fixed $\mu>0$, $\hat{f}\left( x,\mu \right) $ is a convex function of $x$ in $ \mathcal{X} $;
		
		(iii) (gradient consistency) $\big\{\underset{x\rightarrow w,\mu \rightarrow 0}{\lim}\nabla _x\hat{f}\big(x,\mu \big) \big\} \subseteq \partial f\big(w \big), \forall \,\,x\in \mathcal{X} $;
		
		(iv) (gradient boundedness and Lipschitz continuous of $\mu$)
		There exists a positive constant $\kappa _{\hat{f}}>0$ such that
		\begin{equation*}
			\begin{split}
				\left| \nabla _{\mu}\hat{f}\left(x,\mu \right) \right|\leq\kappa _{\hat{f}}\ \ \ \forall \,\,\mu \in \left[0,\bar{\mu}\right], \forall \,\,x\in \mathcal{X},
			\end{split}
		\end{equation*}
		and for any $x\in \mathcal{X}$,$u_1,\mu _2\in \left[ 0,\bar{\mu} \right]$, it follows	
		$$\left| \hat{f}\left( x,\mu _1 \right) -\hat{f}\left( x,\mu _2 \right) \right|\leq \kappa _{\hat{f}}\left| \mu _1-\mu _2 \right|;$$
		
		(v) (Lipschitz continuity with respect to $x$) there is a constant $\ell$ such that for any fixed $ \mu \in \left[0,\bar{\mu}\right)$, $\nabla _x\hat{f}\left( x,\cdot \right) $ is Lipschitz continuous with respect to $x$ on $\mathcal{X}$ with a Lipschitz constant $\frac{\ell}{\mu}$.
	\end{definition}
	
	In addition, the \eqref{Defi_sm} \textit{(iv)} implies that $\left| \hat{f}\left( x,\mu \right) -f\left( x \right) \right|\le \kappa _{\hat{f}}\mu,\,\, \forall\,\, 0<\mu \le \bar{\mu}, \,\,x\in \mathcal{X}$.
	
	The smoothing function satisfying the above conditions in \eqref{Defi_sm} enjoys the following properties (see \cite{Bian2014NeuralNF}):
	
	\begin{lemma} 	
		1): If $\hat{f}_1,\hat{f}_2,...,\hat{f}_n$ are smoothing functions of $f_1,f_2,...,f_n$, then $\sum_{i=1}^n{c_i\hat{f}_i}$ is a smoothing function of $\sum_{i=1}^n{c_if_i}$ with $\kappa _{\sum_{i=1}^m{c_i\hat{f}_i}}=\sum_{i=1}^m{c_i\kappa _{\hat{f}_i}}$ when $c_i\ge 0$ and $f_i$ is regular for any $i=1,2,...,n$.	
		
		2): If $\varPhi: \mathcal{X}\rightarrow \mathbb{R}$ is locally Lipschitz, $\varPsi: \mathbb{R}\rightarrow \mathbb{R}$ is continuously differentiable and globally Lipschitz with a Lipschitz constant $l_{\varPsi}$. Let $\hat{\varPhi}$ be a smoothing function of $\varPhi$, then $\varPsi \left( \hat{\varPhi} \right) $ is a smoothing function of $\varPsi \left( \varPhi \right) $ with $\kappa _{\varPsi \left( \hat{\varPhi} \right)}=l_{\varPsi}\kappa _{\hat{\varPhi}}$.
		
		3):  Let $\varPhi: \mathbb{R}^m\rightarrow \mathbb{R}$ be regular and $\varPsi: \mathbb{R}^n\rightarrow \mathbb{R}^m$ be continuously differentiable. If $\hat{\varPhi}$ is a smoothing function of $\varPhi$, then $\hat{\varPhi}\left( \varPsi \right) $ is a smoothing function of $\varPhi\left( \varPsi \right) $ with 	$\kappa _{\hat{\varPhi}\left( \varPsi \right)}=\kappa _{\hat{\varPhi}}$.
	\end{lemma}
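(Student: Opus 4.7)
The plan is to verify, for each of the three constructions, that every one of the five defining properties (i)--(v) of a smoothing function in Definition~\ref{Defi_sm} is inherited. Since each property is structural, the proof will reduce to elementary calculus (limits, chain rule, triangle inequality) together with basic subdifferential calculus for the gradient-consistency clause (iii). I would organize the argument as three parallel blocks, each checking (i)--(v) in turn.

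For part (1), the linearity of limits immediately gives (i); nonnegativity of the $c_i$ preserves convexity in (ii); differentiating termwise and invoking the sum rule for subdifferentials of regular functions (which holds with equality under regularity, hence the regularity hypothesis on each $f_i$) yields (iii); the triangle inequality together with $|\nabla_\mu(\sum c_i\hat f_i)|\le\sum c_i|\nabla_\mu \hat f_i|\le\sum c_i\kappa_{\hat f_i}$ gives (iv) and the constant $\kappa_{\sum c_i\hat f_i}=\sum c_i\kappa_{\hat f_i}$; finally, a sum of Lipschitz gradients is Lipschitz with the summed constants, giving (v). For part (2), since $\Psi$ is continuously differentiable and globally Lipschitz, the chain rule gives $\nabla_x[\Psi(\hat\varPhi)]=\Psi'(\hat\varPhi)\nabla_x\hat\varPhi$ and $\nabla_\mu[\Psi(\hat\varPhi)]=\Psi'(\hat\varPhi)\nabla_\mu\hat\varPhi$; property (i) follows from continuity of $\Psi$, property (iv) from $|\Psi'|\le l_\varPsi$, which also yields the stated constant $\kappa_{\Psi(\hat\varPhi)}=l_\varPsi\kappa_{\hat\varPhi}$; property (v) follows from the product rule estimate combined with the Lipschitz bounds on $\Psi$ and $\Psi'$; and property (iii) follows from the chain rule for Clarke subdifferentials of Lipschitz compositions. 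For part (3), I would apply the chain rule in the opposite order: $\nabla_x[\hat\varPhi(\varPsi(x))]=(\nabla\varPsi(x))^\top\nabla\hat\varPhi(\varPsi(x),\mu)$ and the $\mu$-derivative passes directly through $\varPsi$, so $\kappa_{\hat\varPhi(\varPsi)}=\kappa_{\hat\varPhi}$; properties (i), (ii) and (iv) are then immediate from the corresponding properties of $\hat\varPhi$ together with continuity of $\varPsi$, while (iii) follows from the chain rule for generalized gradients (exploiting the regularity of $\varPhi$ so that $\partial(\varPhi\circ\varPsi)(x)=(\nabla\varPsi(x))^\top\partial\varPhi(\varPsi(x))$).

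The main obstacle is clause (iii), gradient consistency, in parts (2) and (3): one must show that any cluster point of $\nabla_x[\Psi(\hat\varPhi)(x,\mu)]$ (respectively $\nabla_x[\hat\varPhi(\varPsi(x),\mu)]$) as $x\to w$, $\mu\to 0^+$ lies in $\partial[\Psi\circ\varPhi](w)$ (respectively $\partial[\varPhi\circ\varPsi](w)$). This requires passing to the limit through the chain rule while keeping control of the smoothing gradient $\nabla_x\hat\varPhi$; the regularity hypotheses on $\varPhi$ (and on $f_i$ in part (1)) are exactly what is needed to upgrade the generic inclusion in the subdifferential chain rule to the form we want, and this is the only step that is not a purely mechanical calculation.
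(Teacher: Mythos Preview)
The paper does not prove this lemma at all: it is stated with a parenthetical citation to \cite{Bian2014NeuralNF} and no argument is given. Your plan of checking Definition~\ref{Defi_sm}(i)--(v) item by item for each of the three constructions is exactly the standard route, and your handling of (i), (iii), (iv), (v) is sound; in particular you are right that (iii) is the only step requiring genuine care, and that the regularity hypotheses are there precisely so that the subdifferential sum rule (part~1) and chain rule (parts~2,~3) hold with equality rather than mere inclusion.

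There is, however, a gap concerning property~(ii) (convexity in $x$ for each fixed $\mu$). You skip it entirely in part~(2), and in part~(3) you assert it is ``immediate'', which it is not: $\varPsi(\hat\varPhi(\cdot,\mu))$ need not be convex unless $\varPsi$ is also convex and nondecreasing, and $\hat\varPhi(\varPsi(\cdot),\mu)$ need not be convex unless $\varPsi$ is affine. The explanation is that in the source \cite{Bian2014NeuralNF} the working definition of a smoothing function does not include the convexity clause that this paper adds as item~(ii); the lemma is imported verbatim from a setting where (ii) is absent, so as literally stated under this paper's Definition~\ref{Defi_sm} parts~(2) and~(3) require those extra monotonicity/affinity hypotheses. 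You should flag this rather than claim (ii) for free. A smaller point in the same spirit: your argument for (v) in part~(2) invokes a Lipschitz bound on $\varPsi'$, but only global Lipschitz continuity of $\varPsi$ (hence boundedness of $\varPsi'$) is assumed, so the constant you obtain will not in general have the clean $\ell/\mu$ form without further hypotheses.
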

	
	\begin{example} \label{example1} The existing results in \cite{Chen2012SmoothingMF} provide some theoretical basis to construct smoothing functions that satisfy the conditions in  \eqref{Defi_sm}. A smoothing function for the $g\left( s \right) =\max \left\{ 0,s \right\}$ is given by 
		\begin{align}\label{sm1}
			\hat{g}\left( s,\mu \right) =\begin{cases}
				\max \left\{ 0,s \right\} , \mathrm{if} \left| s \right|>\mu,\\
				\frac{\left( s+\mu \right) ^2}{4\mu},\,\,\,\,\,\,\,\,\,\,\,\mathrm{if} \left| s \right|\leq \mu,\\
			\end{cases}
		\end{align}
		with a $\kappa _{\hat{g}}=\frac{1}{4}$. Note that $\hat{g}\left( s,\mu \right)$ is convex and nondecreasing of $s$ with any fixed $0<\mu \le \bar{\mu}$, is also nondecreasing with respect to $\mu$ for any fixed $s$, and $\underset{\mu \rightarrow 0}{\lim}\hat{g}\left( s,\mu \right) =\max \left\{ 0,s \right\}$ (see Figure \eqref{fig:sm} (left)).
		
		Many nonsmooth convex functions in applications can be reformulated by the $\max \left\{ s,0 \right\}$, such as
		\begin{equation*}
			\begin{split}
				\max \left\{ s,x \right\} =s+\max \left\{ s-x,0 \right\},
				\\
				\min \left\{ s,x \right\} =s-\max \left\{ s-x,0 \right\},
				\\
				\mathrm{mid}\left\{ s,x,w \right\} =\min \left\{ \max \left\{ x,s \right\} ,w \right\},
				\\
				\theta\left( s \right)=\left| s \right|=\max \left\{ s,0 \right\} +\max \left\{ -s,0 \right\},
			\end{split}
		\end{equation*}
		where the smoothing approximation function of $\theta \left( s \right) =\left| s \right|$ is 
		\begin{align}\label{sm2}
			\hat{\theta}\left( s,\mu \right) =\begin{cases}
				\left| s \right|, \,\,\,\,\,\,\,\,\,\,\,\mathrm{if} \left| s \right|>\frac{\mu}{2},\\
				\frac{s^2}{\mu}+\frac{\mu}{4},\mathrm{if} \left| s \right|\leq \frac{\mu}{2},\\
			\end{cases}
		\end{align}
		where $\underset{\mu \rightarrow 0}{\lim}\hat{\theta}\left( s,\mu \right) =\lvert s \rvert$, and $\kappa _{\hat{\theta}}=\frac{1}{4}$ (It is used in \eqref{non-experiment}).
	\end{example}
	 As can be seen from Figure \eqref{fig:sm} (right) that $\hat{\theta}\left( s,\mu \right)$ is also convex and nondecreasing for any fixed $0<\mu \le \bar{\mu}$, and nondecreasing with respect of $s$ for any fixed $s$.
	\begin{figure*}[htbp]
		\centering
		\includegraphics[width=7cm,height=5cm]{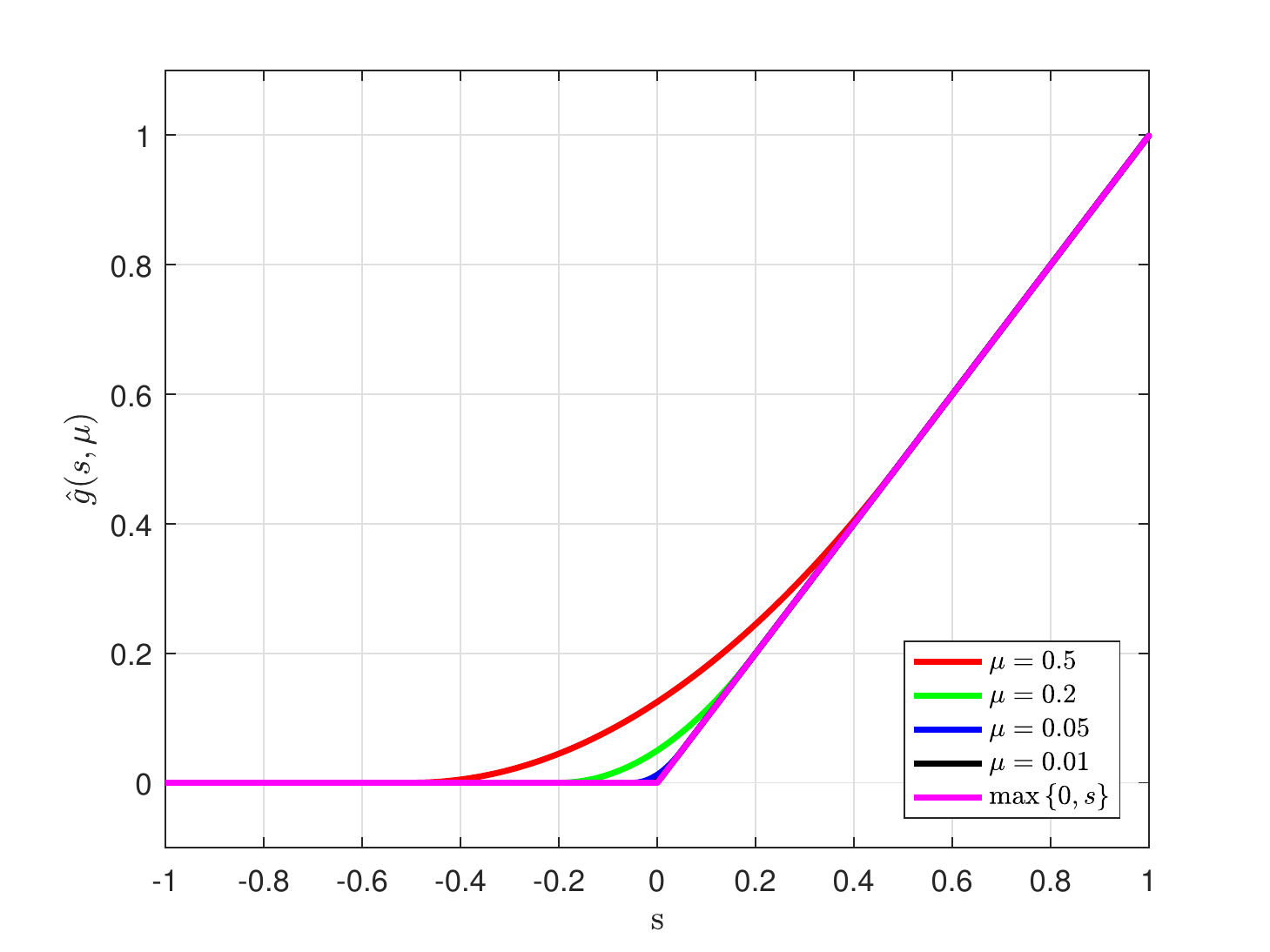}
		\centering
		\includegraphics[width=7cm,height=5cm]{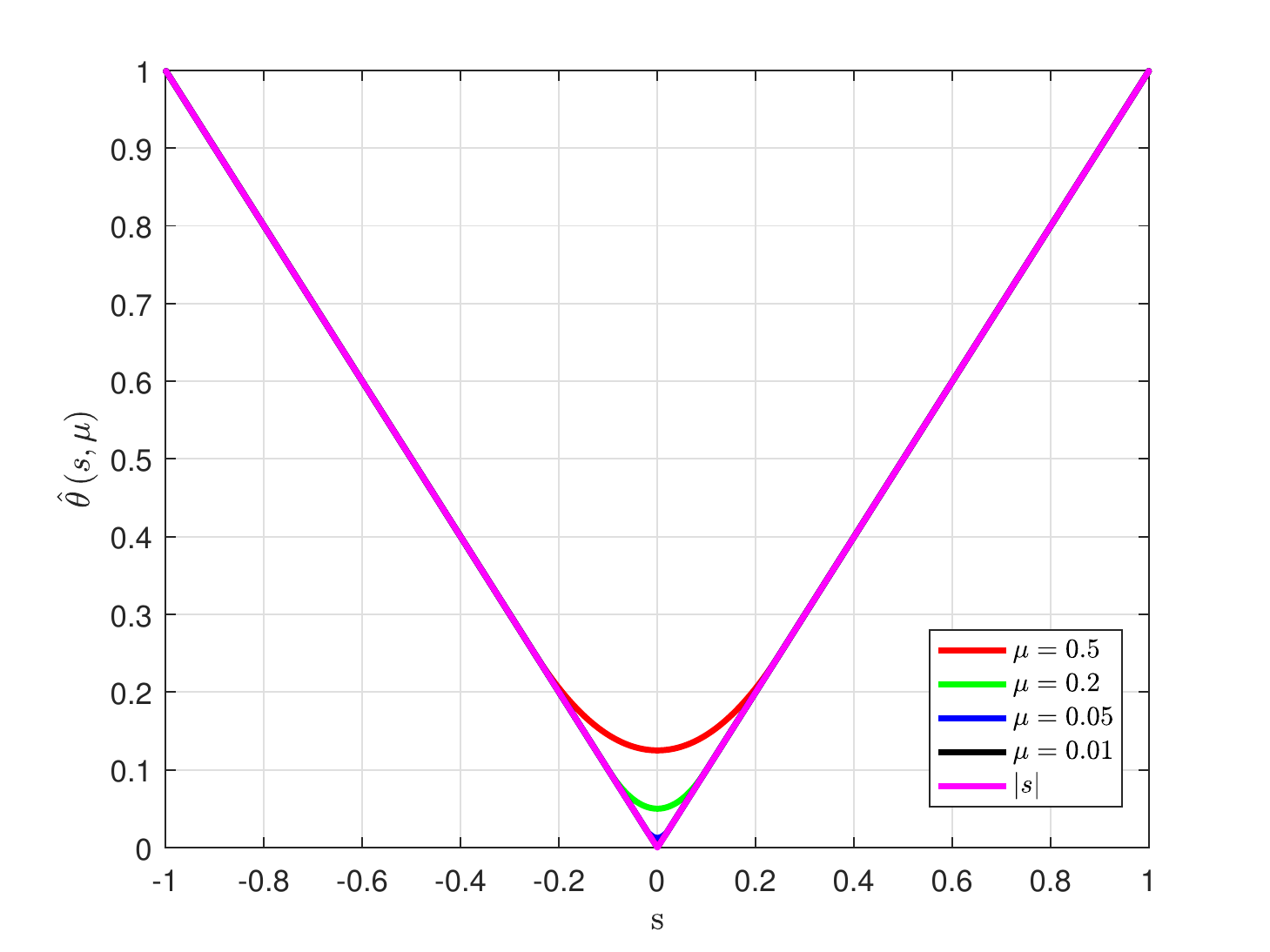}
		\caption{(left) The smoothing function $\hat{g}\left( s,\mu \right)$ with different $\mu$. (right) The smoothing function $\hat{\theta}\left( s,\mu \right)$ with different $\mu$.}
		\label{fig:sm}
	\end{figure*}
	\subsection{Notation} Let $\mathbb{R} ^n$ ($\mathbb{R} ^{m\times n}$) be the $n$-dimensional (or $m$-by-$n$) real vectors ( or real matrices), and $I_n$ be a $n \times n$ identity matrix. For vectors $x$, $y\in \mathbb{R}^n$, $x^Ty=\sum_{i=1}^n{x_iy_i}$, and the superscript $T$ represents transpose. $\mathrm{col}\left( x_1,...,x_n \right) =\left( x_{1}^{T},...,x_{n}^{T} \right) ^T$, $\left\| \cdot \right\| $ is the Euclidean norm, $\left\| \cdot \right\| _1$ is the 1-norm. $\mathrm{L}_{loc}^{1}\left( \left[ t_0,+\infty \right) \right) $ represents the local Lebesgue integrable functions on $\left[ t_0,+\infty \right)$. Let $A_i\in \mathbb{R} ^{p_i\times q_i} (p_i, q_i>0), i=1,...,n$, then we define $\bar{A}=\mathrm{bl}\mathrm{diag}\left\{ A_1,...,A_n \right\} \in \mathbb{R} ^{\sum_{i=1}^n{p_i}\times\sum_{i=1}^n{q_i}}$, which means a block diagonal matrix.  For $x_i\in \mathbb{R} ^m$, $i=1,...,n$, we use $\mathrm{col}\left( x_1,...,x_n \right) \in \mathbb{R} ^{mn}$ to denote an $mn$ column vector.
	
\section{Optimization approaches for problem (\ref{P1}) in the smooth case} In this section, we propose an APDMD approach to address the problem (\ref{P1}) with smooth and convex objective function. Then, we extend it to solve ECCP (\ref{P2}) and DEMO (\ref{P3}) in the smooth case, to obtain ADPDMD (\ref{ADPDMD}) and ADMD (\ref{ADMD}), respectively. To our best knowledge, there are no accelerated mirror dynamical approaches for the problem (\ref{P1}), distributed accelerated dynamical approaches for ECCP (\ref{P2}) and DEMO (\ref{P3}) only with smooth convex objective functions.

To make the well-posedness of the problem (\ref{P1}), some appropriate assumptions are needed, which are fairly standard.

{\bf Aassume} \label{assume1}
	\,\,\,\,\,\,\,\,\,\,\,\,\,\,\,\,\,\,\,\,\,\,\,\,\,\,\,\,\,\,\,\,\,\,\,\,\,\,\,\,\,\,\,\,\,\,\,\,\,\,\,\,\,\,\,\,\,\,\,\,\,\,\,\,\,\,\,\,\,\,\,\,\,\,\,\,\,\,\,\,\,\,\,\,\,\,\,\,\,\,\,\,\,\,\,\,\,\,\,
	
	1. The objective function $f\left( x \right) $ is smooth and convex on an open set containing $\mathcal{X}$, where $\mathcal{X}$ is a closed and convex set;
	
	2. The function $\psi $ is proper, essentially strictly convex and cofinite (see (\ref{mirr}));
	
	3. (Slater's condition) There exists a  vector $x\in\mathrm{int}\left( \mathcal{X} \right)$ that satisfies $Ax=b$.

The formulation (\ref{P1}) covers two important network optimization problems as follows:

\textbf{Scenario 1}: \textit{Distributed Constrained Consensus Problem (DCCP)}. Consider a network of $n$ agents over an undirected graph $\mathcal{G} $. The aims of it is to cooperate for seeking the minimum of the following problem:
\begin{equation}\label{P2}
	\begin{split}
		&\underset{x\in \mathbb{R} ^{nm}}{\min}  f\left( x \right) =\sum_{i=1}^n{f_i\left( x_i \right)},
		\\
		&\mathrm{s}.\mathrm{t}.\,\,Lx=0 \,\,(i.e.,x_i=x_j, i=1,...,n), x_i\in \mathcal{X} _i\subset \mathbb{R} ^m,
	\end{split}
\end{equation}
where $x=\mathrm{col}\left( x_1,...,x_n \right)\in \mathbb{R}^{nm}$, and $\mathcal{X} =\Pi _{i=1}^{n}\mathcal{X} _i\subset \mathbb{R}^{nm}$. $L=L_n\otimes I_m\in \mathbb{R}^{nm\times nm}$ and $L_n\in \mathbb{R}^{n\times n}$ is the Laplacian matirx of $\mathcal{G}$, $Lx=0$ is applied to ensure the consensus of $x_i=x_j, i,j=1,2,...,n$ in a distributed way since the agnet $i$ only accesses its local function $f_i: \mathcal{X}_i\rightarrow \mathbb{R}$ and the constraint $\mathcal{X} _i\in \mathbb{R}^m$.

\textbf{Scenario 2}: \textit{Distributed Extended Monotropic Optimization (DEMO)}. Consider a network of $n$ agents reciprocating information over a graph $\mathcal{G}$. There exists a local objective function $f_i\left( x_i \right) : \mathcal{X} _i\rightarrow \mathbb{R}$ and a local feasible constraint set $\mathcal{X} _i \subset \mathbb{R}^{p_i}, i=1,...,n$. Let $x=\mathrm{col}\left( x_1,...,x_n \right) $, $\mathcal{X} =\Pi _{i=1}^{n}\mathcal{X} _i\subset \mathbb{R}^{\sum_{i=1}^n{p_i}}$, then, the DEMO problem is
\begin{equation}\label{P3}
	\begin{split}
		\underset{x\in \mathbb{R} ^{\sum_{i=1}^n{p_i}}}{\min} f\left( x \right) =\sum_{i=1}^n{f_i\left( x_i \right)},
		\,\,\mathrm{s}.\mathrm{t}. \,\,\sum_{i=1}^n{A_ix_i}=\sum_{i=1}^n{d_i},\,\, x_i\in \mathcal{X} _i,\,\, i=1,2,...,n,
	\end{split}
\end{equation}
where $A=\left[A_1,...,A_n \right] \in \mathbb{R}^{m\times \sum_{i=1}^n{p_i}}$ and $A_i\in \mathbb{R}^{m\times p_i}$.

To ensure the well-posedness of DCCP (\ref{P2}) and DEMO (\ref{P3}), some appropriate assumptions need to be made on them, which are fairly standard as follows:

{\bf Aassume 1: } \label{assume2}	
	1. The objective function $f\left( x \right)$ is $\sum_{i=1}^m{f_i\left( x_i \right)}$ and for all $i=1,...,n$, $f_i\left( x_i \right) $ is smooth and  convex on an open set containing $\mathcal{X}_i$, and $\mathcal{X}_i$ is a closed and convex set;
	
	2. For all $i=1,...,n$, $\psi_i $ is proper, essentially strictly convex and cofinite (see (\ref{mirr}));
	
	3. The communication graph $\mathcal{G}$ is connected and undirected;
	
	4. The Slater's condition of DCCP (\ref{P2}) and DEMO (\ref{P3}) is satisfied.

\subsection{APDMD for problem \eqref{P1} with smooth convex objective functions}\label{sec:main}

Inspired by the accelerated mirror descent in \cite{Krichene2015AcceleratedMD} and primal-dual dynamical approach in \cite{feijer2010stability}, we propose the following APDMD:
\begin{align}\label{PDM}
	\begin{cases}
		\dot{x}\left( t \right) =\frac{\alpha}{t}\left( \nabla \psi ^*\left( u\left( t \right) \right) -x\left( t \right) \right) ,
		\\
		\dot{u}\left( t \right) =-\frac{t}{\alpha}\left( \nabla f\left( x\left( t \right) \right) +\beta A^T\left( Ax\left( t \right) -b \right) +A^Tv\left( t \right) \right) ,
		\\
		\dot{\lambda}\left( t \right) =\frac{\alpha}{t}\left( v\left( t \right) -\lambda\left( t \right) \right), 
		\\
		\dot{v}\left( t \right) =\frac{t}{\alpha}\left( A\nabla \psi ^*\left( u\left( t \right) \right) -b \right) .
		\\
		x\left( t_0 \right) =x_0, u\left( t_0 \right) =u_0\,\,\mathrm{with}\,\,\nabla \psi ^*\left( u_0 \right) =x_0\in \mathcal{X}, \lambda \left( t_0 \right) =\lambda _0,v\left( t_0 \right) =v_0,
	\end{cases}    
\end{align} 
where $t\geq t_0>0$, $\beta>0$, and $\alpha \geq 2$. The illustration of the APDMD \eqref{PDM} is in \eqref{fig:APDMD} (left).

Note that the APDMD \eqref{PDM} can be equivalently transformed into the following second-order dynamical approach: 

\begin{align}\label{PDM1}
	\begin{cases}
		\ddot{x}\left( t \right) +\frac{\alpha +1}{t}\dot{x}\left( t \right) +\nabla ^2\psi ^*\left( \nabla \psi \left( x\left( t \right) +\frac{t}{\alpha}\dot{x}\left( t \right) \right) \right) 
		\\
		\,\,\,\,\,\,\,\,\,\,\,\,\,\times \left( \nabla f\left( x\left( t \right) \right) +\beta A^T\left( Ax\left( t \right) -b \right) +A^T\left( \lambda \left( t \right) +\frac{t}{\alpha}\dot{\lambda}\left( t \right) \right) \right) =0,
		\\
		\ddot{\lambda}\left( t \right) +\frac{\alpha +1}{t}\dot{\lambda}\left( t \right) -A\left( x\left( t \right) +\frac{t}{\alpha}\dot{x}\left( t \right) \right) +b=0,
		\\
		x\left( t_0 \right) =x_0\in \mathcal{X}, \dot{x}\left( t_0 \right) =\dot{x}_0, \lambda \left( t_0 \right) =\lambda _0, \dot{\lambda}\left( t_0 \right) =\dot{\lambda}_0,                                  \\
	\end{cases}
\end{align} 
where the Hessian term $\nabla ^2\psi ^*\left( \nabla \psi \left( x\left( t \right) +\frac{t}{\alpha}\dot{x}\left( t \right)  \right) \right)$ is nonlinear transformation. It applies to $\nabla f\left( x\left( t \right)  \right) +\beta A^T\left( Ax\left( t \right) -b \right) +A^T\left( \lambda \left( t \right) +\frac{t}{\alpha}\dot{\lambda} \left( t \right) \right) $ to guarantee the trajectory of $x$ is inside the feasible set $\mathcal{X}$ in the intuitive understanding (the rigorous proof process in \eqref{feasibility1}). The form in  \eqref{PDM1} allows us to understand more intuitively why the proposed \eqref{PDM} is called the primal-dual method that is with primal variable $x$ and dual variable ${\lambda}$.
\begin{figure}[!htbp]
	\centering
	\includegraphics[width=15cm,height=6cm]{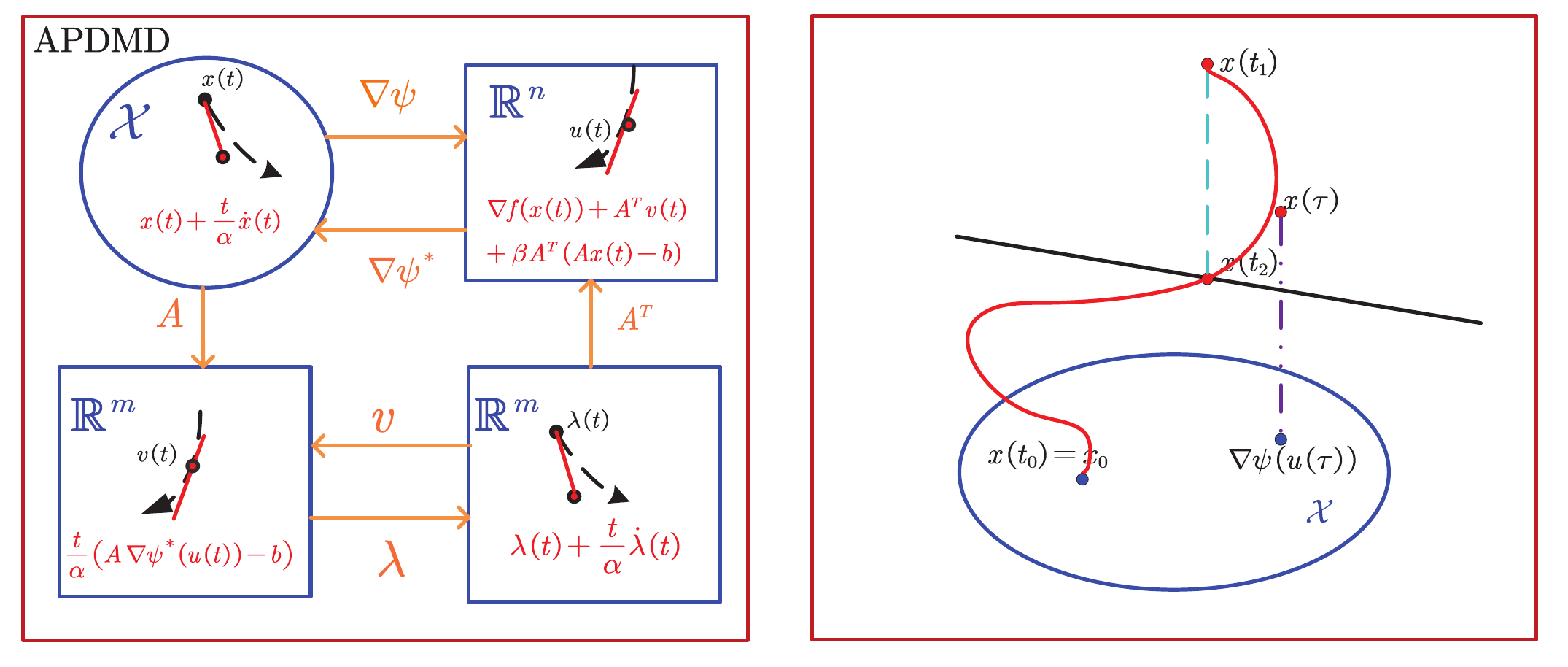}
	\caption{(left) Illustration of the APDMD \eqref{PDM}. (right) The feasibility of $x\left( t \right)$ to APDMD \eqref{PDM}.}
	\label{fig:APDMD}
\end{figure}

\subsection{Interpretation of the APDMD} The APDMD \eqref{PDM} (i.e., \eqref{PDM1}) can be interpreted from different perspectives: neurodynamic approach, Hamilton's system, game theory and control theory.
\begin{figure} [!htbp]
	\centering
	\includegraphics[width=15cm,height=11cm]{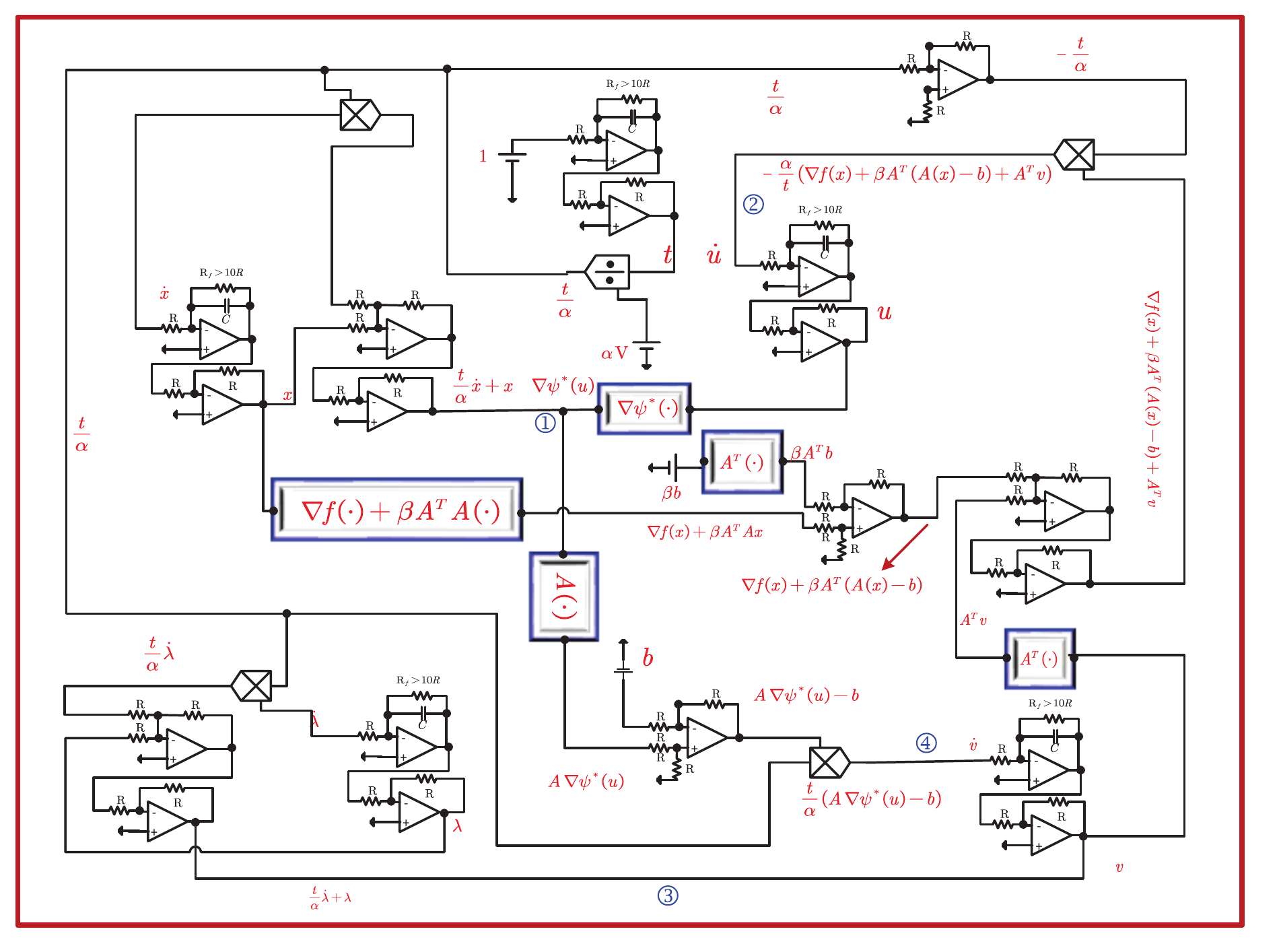}
	\caption{Circuit architecture diagram of APDMD \eqref{PDM} with
		\text{\textcircled{1}}$\frac{t}{\alpha}\dot{x}+x=\nabla \psi ^*\left( u \right)$, \textcircled{2}$\dot{u}=-\frac{t}{\alpha}\left( \nabla f\left( x \right) +\beta A^T\left( Ax-b \right) +A^Tv \right)$,
		\textcircled{3} $\frac{t}{\alpha}\dot{\lambda}+\lambda =v$, 
		\textcircled{4} $\dot{v}=\frac{t}{\alpha}\left( A\nabla \psi ^*\left( u \right) -b \right)$.}
	\label{fig:circuit}
\end{figure}
\begin{itemize}
\item \textbf{Neurodynamic approach perspective}. The APDMD \eqref{PDM} be regarded as a neurodynamic approach (describe the dynamic behavior of neurons), thus APDMD can be implemented by analog circuit like the Hopfield neural network in \cite{Hopfield1986ComputingWN}, as shown in Figure \eqref{fig:circuit}. The circuit in Figure\eqref{fig:circuit} is designed according to APDMD \eqref{PDM} by using analog adders, analog subtracters, analog multipliers, analog integrators , etc. When the circuit is turned on, the stable value of the voltage at position $x$ in Figure\eqref{fig:circuit} is the stable point to the APDMD \eqref{PDM} (i.e., an optimal solution to the problem \eqref{P1}). For more information about neurodynamic approaches and their circuits, please see \cite{kennedy1988neural}.
\end{itemize}

\begin{itemize}
\item \textbf{Hamiltonian system perspective}. The Hamiltonian-based system approach is used to design accelerated dynamical approaches for solving problem \eqref{uncon_optimization}, which has recently been widely studied in \cite{Diakonikolas2021GeneralizedMM,Wibisono2016AVP}, and it plays a key role in designing of APDMD \eqref{PDM} and the development of our Lyapunov analysis method.

Designing the first Hamiltonian (time-dependent) system 
\begin{align}
	&H_1\left( \bar{x}\left( t \right) ,u\left( t \right) ,\eta \left( t \right) \right) =\psi ^*\left( u\left( t \right) \right) +\varGamma \left( \eta \left( t \right) \right) \left( f\left( \frac{\bar{x}\left( t \right)}{\eta \left( t \right)} \right) \right. \nonumber
	\\
	&\,\,\,\,\,\,\,\,\,\,\,\,\,\,\,\,\,\,\,\,\,\,\,\,\,\,\,+\beta \left\| A\left( \frac{\bar{x}\left( t \right)}{\eta \left( t \right)} \right) -b \right\| ^2\left. +v\left( t \right) ^T\left( A\left( \frac{\bar{x}\left( t \right)}{\eta \left( t \right)} \right) -b \right) \right) \nonumber	
\end{align}
with $\bar{x}\left( t \right) =\eta \left( t \right) x\left( t \right) $, $\eta \left( t \right) =t^{\alpha}$ and $ \varGamma \left( \eta \left( t \right) \right) =\frac{\left( \eta \left( t \right) \right) ^{\frac{2}{\alpha}}}{\alpha ^2}$. 

The corresponding continuous-time dynamics of $H_1$ is 
\begin{align*}
\frac{d}{dt}\left( \bar{x}\left( t \right) \right) =&\dot{\eta}\left( t \right) \frac{d\bar{x}\left( t \right)}{d\eta \left( t \right)}=\dot{\eta}\left( t \right) \nabla _uH_1\left( \bar{x}\left( t \right) ,u\left( t \right) ,\eta \left( t \right) \right) =\dot{\eta}\left( t \right) \nabla \psi ^*\left( u\left( t \right) \right) 
\\
&\Rightarrow \dot{x}\left( t \right) =\frac{\alpha}{t}\left( \nabla \psi ^*\left( u\left( t \right) \right) -x\left( t \right) \right);
\end{align*}
\begin{align*}
\frac{d}{dt}u\left( t \right) =&\dot{\eta}\left( t \right) \frac{du\left( t \right)}{d\eta \left( t \right)}=-\varGamma \left( \eta \left( t \right) \right) \dot{\eta}\left( t \right) \nabla _{\bar{x}}H_1\left( \bar{x}\left( t \right) ,u\left( t \right) ,\eta \left( t \right) \right) 
\\
&\Rightarrow \dot{u}\left( t \right) =-\frac{\alpha}{t}\left( \nabla f\left( x\left( t \right) \right) +\beta A^T\left( Ax\left( x\left( t \right) \right) -b \right) +A^Tv\left( t \right) \right).
\end{align*}
Furthermore, setting the second Hamiltonian (time-dependent) system be  
\begin{align*}
H_2\left( \bar{\lambda}\left( t \right) ,v\left( t \right) ,\eta \left( t \right) \right) =\frac{1}{2}\left\| v\left( t \right) \right\| ^2-\varGamma \left( \eta \left( t \right) \right) \left( \frac{\bar{\lambda} \left( t \right)}{\eta \left( t \right)} \right) ^T\left( Au\left( t \right) -b \right),
\end{align*}	
where $\bar{\lambda}\left( t \right) =\eta \left( t \right) \lambda \left( t \right) $. 

The corresponding continuous-time dynamics of $H_2$ is 
\begin{align*}
\frac{d}{dt}\left( \bar{\lambda}\left( t \right) \right) =&\dot{\eta}\left( t \right) \frac{d\bar{\lambda}\left( t \right)}{d\eta \left( t \right)}=\dot{\eta}\left( t \right) \nabla _vH_2\left( \bar{\lambda}\left( t \right) ,v\left( t \right) ,\eta \left( t \right) \right) =\dot{\eta}\left( t \right) v\left( t \right) 
\\
&\Rightarrow \dot{\lambda}\left( t \right) =\frac{\alpha}{t}\left( v\left( t \right) -\lambda \left( t \right) \right) 
\\
\frac{d}{dt}v\left( t \right)=&\dot{\eta}\left( t \right) \frac{dv\left( t \right)}{d\eta \left( t \right)}=-\varGamma \left( \eta \left( t \right) \right) \dot{\eta}\left( t \right) \nabla _{\bar{\lambda}}H_2\left( \bar{\lambda}\left( t \right) ,v\left( t \right) ,\eta \left( t \right) \right) 
\\
&\Rightarrow \dot{v}\left( t \right) =-\frac{t}{\alpha}\left( b-Au\left( t \right) \right).
\end{align*}	
From the above conclusions, the APDMD \eqref{PDM} can be obtained directly. 

\item \textbf{Game theoretic standpoint} \cite{Attouch2021FastCO}. Let us consider $x\left( t \right)$ and $\lambda \left( t \right) $ as two players who compete with each other. Briefly, we identify players by their actions. We can see that each player anticipates the opponent's movement in APDMD \eqref{PDM1}. In the coupling term, the player $\lambda \left( t \right) $ takes account of the anticipated position of the player $x\left( t \right) $, i.e., $x\left( t \right) +\frac{\alpha}{t}\dot{x}\left( t \right) $. Conversely, for player $x\left( t \right)$, it takes account of the anticipated position of the player $\lambda \left( t \right)$,  i.e., $\lambda \left( t \right) +\frac{t}{\alpha}\dot{\lambda}\left( t \right) $.

\item \textbf{Control theoretic view}. The APDMD \eqref{PDM1} can also be associated with control theory and state derivative feedback. Let $\chi \left( t \right) =\mathrm{col}\left( x\left( t \right),\lambda \left( t \right) \right)$, the APDMD \eqref{PDM1} can be written in the following formulation $$\ddot{\chi}\left( t \right) +\frac{\alpha +1}{t}\dot{\chi}\left( t \right) =\varUpsilon \left( t,\chi \left( t \right) ,\dot{\chi}\left( t \right) \right),$$
with an operator $\varUpsilon$, i.e., a feedback control term which takes the constraint into account. It is a function of the state $\chi \left( t \right)$, its derivative $\dot{\chi}\left( t \right)$ and time $t$. For a comprehensive understanding of state derivative feedback, the readers can consult reference \cite{Michiels2009StabilizabilityAS}.
\end{itemize}


\subsection{Feasibility, existence and uniqueness of strong global solution to APDMD} \label{solution1} In this subsection, we illustrate the feasibility, the existence and uniqueness of the strong global solution $x\left(t\right)$ for APDMD  (\ref{PDM}) by the  Cauchy-Lipschitz-Picard theorem in \cite{Bolte2003SurDS}.

\begin{lemma}\label{feasibility1}
	For any initial values $\left(x\left( t_0 \right) ,u\left( t_0 \right), \lambda \left( t_0 \right), \upsilon \left( t_0 \right)\right) \in \mathcal{X} \times \mathbb{R}^n\times \mathbb{R}^m\times \mathbb{R}^m$, the variable $x\left( t \right) \in  \mathcal{X} , \forall \,\,t\geq t_0> 0$, i.e., the solution $x\left( t \right)$ is feasible.
\end{lemma}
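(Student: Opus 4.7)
The plan is to exploit the fact that the first equation of \eqref{PDM} is a linear ODE in $x(t)$ driven by the term $\nabla\psi^*(u(t))$, which, thanks to the conjugate structure set up in Section~\ref{mirr}, always lies in $\mathcal{X}$. I would then write $x(t)$ explicitly as a convex combination of points in $\mathcal{X}$, and invoke convexity of $\mathcal{X}$ to conclude.

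More concretely, I would first record the key fact that $\nabla\psi^*(u)\in\mathcal{X}$ for every $u\in\mathcal{X}^*$. This follows because under Assumption~\ref{assume1}(2), $\psi$ is essentially strictly convex and cofinite, so by the lemma in Section~\ref{mirr}, $\psi^*$ is finite and differentiable on $\mathcal{X}^*$, and $\nabla\psi^*(u)=\arg\max_{x\in\mathcal{X}}\{u^Tx-\psi(x)\}\in\mathrm{dom}\,\psi=\mathcal{X}$. In particular, along any trajectory of \eqref{PDM}, the map $s\mapsto\nabla\psi^*(u(s))$ is $\mathcal{X}$-valued.

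Next, I would rewrite the first equation of \eqref{PDM} as $\dot{x}(t)+\tfrac{\alpha}{t}x(t)=\tfrac{\alpha}{t}\nabla\psi^*(u(t))$ and multiply by the integrating factor $t^{\alpha}$ to obtain
\begin{equation*}
\frac{d}{dt}\bigl(t^{\alpha}x(t)\bigr)=\alpha\,t^{\alpha-1}\nabla\psi^*(u(t)).
\end{equation*}
Integrating from $t_0$ to $t$ yields
\begin{equation*}
x(t)=\frac{t_0^{\alpha}}{t^{\alpha}}\,x(t_0)+\frac{1}{t^{\alpha}}\int_{t_0}^{t}\alpha\,s^{\alpha-1}\nabla\psi^*(u(s))\,ds.
\end{equation*}
Since $\tfrac{t_0^{\alpha}}{t^{\alpha}}+\tfrac{1}{t^{\alpha}}\int_{t_0}^{t}\alpha s^{\alpha-1}ds=1$ and all weights are nonnegative, this expresses $x(t)$ as a (continuous) convex combination of $x(t_0)\in\mathcal{X}$ and values $\nabla\psi^*(u(s))\in\mathcal{X}$. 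By the convexity (and closedness) of $\mathcal{X}$ it follows that $x(t)\in\mathcal{X}$ for all $t\ge t_0$.

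The only subtlety I expect is making the integral representation rigorous, which requires that a solution exists on $[t_0,\infty)$ and that $s\mapsto\nabla\psi^*(u(s))$ be locally integrable. Local existence follows from the Cauchy--Lipschitz--Picard theorem applied to the smooth right-hand side of \eqref{PDM} on the open set where $\psi^*$ is differentiable, and the argument above shows the $x$-component stays in the closed set $\mathcal{X}$, so no blow-up into a non-differentiability region of $\psi^*$ can occur; a standard continuation argument then extends the solution globally. With this in hand, the displayed convex-combination formula holds and the feasibility claim follows.
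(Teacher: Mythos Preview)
Your argument is correct, and it is genuinely different from the paper's proof. The paper argues by contradiction: assuming $x(t_1)\notin\mathcal{X}$ for some $t_1>t_0$, it invokes the separating hyperplane theorem to produce a linear functional $d(z)=(z-a)^T\omega$ negative on $\mathcal{X}$ and positive at $x(t_1)$, finds the last time $t_2$ where $d(x(\cdot))$ vanishes, and then applies the mean value theorem to $d(x(\cdot))$ on $[t_2,t_1]$; since $\dot d(x(\tau))=\tfrac{\alpha}{\tau}\bigl(d(\nabla\psi^*(u(\tau)))-d(x(\tau))\bigr)$ with $\nabla\psi^*(u(\tau))\in\mathcal{X}$ and $d(x(\tau))\ge 0$, this derivative is nonpositive, contradicting $d(x(t_1))>d(x(t_2))=0$.

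Your approach is more constructive: by solving the linear ODE for $x$ with an integrating factor you exhibit $x(t)$ explicitly as an integral convex combination of $x(t_0)$ and the values $\nabla\psi^*(u(s))$, all of which lie in $\mathcal{X}$. This has the advantage of yielding an explicit representation formula and making the role of closedness and convexity of $\mathcal{X}$ transparent (the fact that a probability-weighted integral of $\mathcal{X}$-valued functions stays in $\mathcal{X}$ is itself a one-line consequence of separation). The paper's contradiction argument, on the other hand, avoids writing down the variation-of-constants formula and would adapt more readily to settings where the $x$-equation is not linear in $x$. Both proofs ultimately rest on the same two ingredients: $\nabla\psi^*(u)\in\mathcal{X}$ and the convexity of $\mathcal{X}$.
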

	\begin{proof}	
		Inspired by the work in \cite{krichene2016lyapunov}, we provid a rigorous proof for the feasibility by contradiction. Suppose there exist $t_1>0$ and $x\left(t_1 \right) \notin \mathcal{X}$. Since $\mathcal{X}$ is closed and convex, by the separation theorem, there exists a hyperplane that strictly separates $x\left( t_1 \right) $ and the set $\mathcal{X}$. That is, there exist $\omega$, $a\in \mathbb{R}^n$ such that $\left( x\left( t_1 \right) -a \right) ^T\omega >0$ and $\left( x-a \right) ^T\omega <0, \forall x\in \mathcal{X} $. Let $d\left( x\left( t \right) \right) =\left( x-a \right) ^T\omega$. Note that the trajectory $x\left( t \right) $ is continuous, then $t\rightarrow d\left( x\left( t \right) \right)$ is continuous, and $d\left( x\left( t_0 \right) \right) =\left( x\left( t\right)-a \right) ^T \omega <0$, $d\left( x\left( t_1 \right) \right)=\left( x\left( t_1 \right)-a \right) ^T\omega >0$, thus, there is a $t_2$ such that $d\left( x\left( t_2 \right) \right) =0$, i.e., $d\left( x\left( t \right) \right) >0, t\in \left( t_2,t_1 \right] $ and $d\left( x\left( t \right) \right) <0$, $t\in \left[ t_0,t_2 \right) $, which implies $t_2=\mathrm{sub}\left\{ t:d\left( x\left( t \right) \right) \leq0 \right\} $. We have $d\left( x\left( t_1 \right) \right) -d\left( x\left( t_0 \right) \right) >0$, according to Taylor's theorem and $\dot{x}\left( t \right)=\frac{\alpha}{t}\left( \nabla \psi ^*\left( u \left( t \right)\right) -x \left( t \right)\right)$, there exists $\tau \in \left[ t_2,t_1 \right]$, such that 
		\begin{align*}
			d\left( x\left( t_1 \right) \right) -d\left( x\left( t_2 \right) \right) =&\dot{d}\left( x\left( \tau \right) \right) 
			\,\,                        =\left( \dot{x}\left( \tau \right) \right) ^T\omega =\omega ^T\frac{\alpha}{\tau}\left( \nabla \psi ^*\left( u\left( \tau \right) \right) -x\left( \tau \right) \right)
			\\
			\,\,                        =&\frac{\alpha}{\tau}\left( d\left( \nabla \psi ^*\left( u\left( \tau \right) \right) \right) -d\left( x\left( \tau \right) \right) \right) <0,
		\end{align*}
		since $d\left( \nabla \psi ^*\left( u_0 \right) \right) \in \mathcal{X} $.  It leads to a contradiction, which concludes the proof and is shown in Figure. \ref{fig:APDMD} (right).		
	\end{proof}	
		
\begin{definition}\label{stro_solu}
	Let $x:\left[ t_0,+\infty \right) \rightarrow \mathcal{X}$, $\lambda: \left[ t_0,+\infty \right) \rightarrow \mathbb{R}^m$ with the corresponding product space structure $\mathcal{X} \times \mathbb{R}^m$ and $t_0>0$. $\left( x,u,\lambda ,v \right) $ is a strong global solution of APDMD \eqref{PDM} if it satisfies the following properties:
	
	(i) $x,u:\left[ t_0,+\infty \right) \rightarrow \mathbb{R}^n$, $\lambda ,v:\left[ t_0,+\infty \right) \rightarrow \mathbb{R}^m$ are locally absolutely continuous,
	
	(ii) The APDMD holds for almost every $t\geq t_0$,
	
	(iii) $x\left( t_0 \right) =x_0\in \mathcal{X}$, $u\left( t_0 \right) =u_0$, $\lambda \left( t_0 \right) =\lambda _0$, $v \left( t_0 \right) =v _0$.

	A mapping $x:\left[ t_0,+\infty \right) \rightarrow \mathbb{R}^n$ is called locally absoutely continuous if it is absolutely continuous on every compact interval $\left[ t_0,\mathcal{T} \right] $ with $\mathcal{T} >t_0$. For the absolutely continuous function $x:\left[ t_0,+\infty \right) \rightarrow \mathbb{R}^n$, it has the following equivalent characterizations:
	
	(a) There exists $\mathsf{x} :\left[ t_0,T \right) \rightarrow R^n$  an integrable function, such that 
	$$x\left( t \right) =x\left( t_0 \right) +\int_{t_0}^t{\mathsf{x} \left( s \right) ds}, \forall t\in \left[ t_0,\mathcal{T} \right];$$
	
	(b) $x$ is a continuous function and its distribuional derivative is Lebesque integrable on the interval  $\left[ t_0,\mathcal{T} \right]$;
	
	(c) For every $\varepsilon >0$, there exists $\pi >0$, such that for every finite family $I_k=\left( a_k,b_k \right) $ from $\left[ t_0,T \right] $, the following  implication is valid:
	$$\left[ I_k\cap I_j=\oslash \,\,\mathrm{and} \sum_k{\left| b_k-a_k \right|}<\pi \right] \Rightarrow \left[ \sum_k{\left\| x\left( b_k \right) -x\left( a_k \right) \right\| <\varepsilon} \right].$$
\end{definition}
\begin{theorem}\label{The4.4}	
	For any initial values $\left( x\left( t_0 \right) ,u\left( t_0 \right), \lambda \left( t_0 \right), \upsilon \left( t_0 \right) \right)\in \mathcal{X} \times \mathbb{R}^n\times \mathbb{R}^m\times \mathbb{R}^m$, there exists a unique strong global solution of the APDMD.
\end{theorem}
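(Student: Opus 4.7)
The plan is to recast APDMD as a first-order ODE $\dot w(t) = F(t, w(t))$ on the enlarged state $w = (x, u, \lambda, v) \in \mathbb{R}^{2n + 2m}$, apply the Cauchy--Lipschitz--Picard theorem cited in \cite{Bolte2003SurDS} for local existence and uniqueness, and then rule out finite-time blow-up to extend the maximal solution to all of $[t_0, +\infty)$. Thus I would proceed in three steps: (i) verify the regularity of the right-hand side; (ii) invoke Cauchy--Lipschitz--Picard to obtain a unique maximal strong solution; (iii) establish a priori bounds to conclude that the maximal interval is $[t_0, +\infty)$.

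First I would verify the regularity of $F$. The coefficients $\alpha/t$ and $t/\alpha$ are continuous on $[t_0, +\infty)$ since $t_0 > 0$. By Assumption~1, $\nabla f$ is continuous on an open neighborhood of $\mathcal{X}$ and thus locally Lipschitz there. The lemma in Section~\ref{mirr}, combined with Assumption~1(2) (essentially strict convexity and cofiniteness of $\psi$), guarantees that $\psi^*$ is finite and differentiable on $\mathcal{X}^*$, and under the implicit smoothness of $\psi$ used throughout the paper, $\nabla \psi^*$ is locally Lipschitz. The linear terms involving $A$, $A^T$, and the quadratic penalty $\beta A^T(Ax-b)$ are globally Lipschitz on bounded sets. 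Consequently, $F : [t_0,+\infty) \times \mathbb{R}^{2n+2m} \to \mathbb{R}^{2n+2m}$ is continuous in $(t,w)$ and locally Lipschitz in $w$ uniformly on compact time intervals. Cauchy--Lipschitz--Picard then delivers a unique locally absolutely continuous solution $(x,u,\lambda,v)$ on some maximal interval $[t_0, T_{\max})$ solving APDMD almost everywhere with the prescribed initial data, i.e.\ satisfying (i)--(iii) of Definition~\ref{stro_solu}.

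The main obstacle is the global-extension step. I would argue by contradiction: if $T_{\max} < +\infty$, standard ODE theory forces $\|w(t)\| \to \infty$ as $t \uparrow T_{\max}$, so it suffices to exhibit an a priori bound on $\|w(t)\|$ over $[t_0, T_{\max})$. By Lemma~\ref{feasibility1}, $x(t) \in \mathcal{X}$ for every $t \in [t_0, T_{\max})$; moreover $\nabla \psi^*(u(t)) \in \mathcal{X}$, since it realizes the argmax in the definition of the conjugate (see Section~\ref{mirr}). Hence both $\nabla f(x(t))$ and $A\nabla \psi^*(u(t)) - b$ are controlled on any compact time interval, either by boundedness of $\mathcal{X}$ when available or through the Lyapunov/energy function that will be developed in the convergence analysis of Section~\ref{sec:main} (which majorizes $\|x(t)\|$ and $\|\nabla \psi^*(u(t))\|$ independently of $T_{\max}$). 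Once these quantities are bounded, the $u$- and $v$-equations form a linear non-autonomous system in $(u,v)$ (the coupling being through $A^T v$ and $A\nabla \psi^*(u)$) with locally bounded inhomogeneous term, so a Gronwall estimate yields bounds on $\|u(t)\|$ and $\|v(t)\|$; the $\lambda$-equation is then a linear ODE with bounded forcing and gives the required bound on $\|\lambda(t)\|$. This contradicts blow-up, so $T_{\max} = +\infty$, and uniqueness on every compact subinterval propagates to the full half-line, completing the proof.
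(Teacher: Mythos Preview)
Your proposal is correct but follows a genuinely different route from the paper. The paper applies the \emph{global} version of Cauchy--Lipschitz--Picard: it shows directly that for each fixed $t$ the map $Y\mapsto F(t,Y)$ is globally $\mathfrak{l}(t)$-Lipschitz on the whole state space, with an explicit $\mathfrak{l}(t)$ built from (implicitly assumed) global Lipschitz constants $\mathfrak{l}_f$ for $\nabla f$ and $\mathfrak{l}_{\psi^*}$ for $\nabla\psi^*$, and that $\mathfrak{l}\in L^1_{\mathrm{loc}}([t_0,\infty))$; it then checks $t\mapsto\|F(t,Y)\|$ is locally integrable for each fixed $Y$. Those two conditions deliver a unique strong global solution in one stroke, with no maximal-interval or blow-up analysis at all.

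Your approach instead assumes only local Lipschitz continuity of the right-hand side, obtains a maximal local solution, and then rules out finite-time blow-up via the a~priori bounds coming from the Lyapunov function of the later convergence theorem (plus Gronwall for the $u,v$ components). This is sound and in fact works under weaker regularity on $\nabla f$ and $\nabla\psi^*$ than what the paper's computation tacitly uses. The cost is a forward reference to the energy estimates and a longer argument; the paper's proof is shorter and self-contained, but silently imports global Lipschitz constants that are not explicitly listed among the standing assumptions.
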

	\begin{proof}
		Let $Y\left( t \right) =\left( x\left( t \right) ,u\left( t \right) ,\lambda \left( t \right) ,v\left( t \right) \right) $, then the APDMD can be rewritten as follows:
	\begin{equation}\label{ex-un}
		\begin{cases}
			\dot{Y}\left( t \right) =F\left( t,Y\left( t \right) \right) \,\,                    \\
			Y\left( t_0 \right) =\left( x\left( t_0 \right) ,u\left( t_0 \right) , \lambda \left( t_0 \right), \upsilon \left( t_0 \right) \right),
		\end{cases}
	\end{equation}
	where $F:\left[ t_0,+\infty \right) \times \mathcal{X} \times \mathbb{R}^n\times \mathbb{R} ^m\times \mathbb{R} ^m, F\left( t,Y \right) =\left( \frac{\alpha}{t}\left( \nabla \psi ^*\left( u \right) -x \right) , \right. -\frac{t}{\alpha}\left( \nabla f\left( x \right) \right. 
	\\
	\left. +\beta A^T\left( Ax-b \right) -A^Tv \right) ,\frac{\alpha}{t}\left( v-\lambda \right) ,\frac{t}{\alpha}\left( A\nabla \psi ^*\left( u \right) -b \right) \left. ,\frac{\alpha}{t}\left( v-\lambda \right) ,\frac{t}{\alpha}\left( A\nabla \psi ^*\left( u \right) -b \right) \right) $.
	
	According to the Cauchy-Lipschitz-Picard theorem, there exists a unique strong global solution for APDMD (\ref{PDM}), i.e., (\ref{PDM1}), if the following conditions \textbf{(I)} and \textbf{(II)} are fulfilled.
	
	\textbf{(I)}: For every $t\in \left[ t_0,+\infty \right) $, the mapping $F\left( t,\cdot \right) $ is $\mathfrak{l} \left( t \right)$-Lipschitz continuous and $\mathfrak{l} \left( \cdot \right) \in \mathrm{L}_{\mathrm{loc}}^{1}\left( \left[ t_0,+\infty \right) \right) $.
	
	\textbf{(II)}: For any $Y\in \mathcal{X} \times \mathbb{R}^n\times \mathbb{R}^m\times \mathbb{R}^m$, we have $F\left( \cdot ,Y \right) \in \mathrm{L}_{\mathrm{loc}}^{1}\left( \left[ t_0,+\infty \right),\mathcal{X} \times \mathbb{R}^n,\right. 
	\\
	\left. \times \mathbb{R}^m\times \mathbb{R}^m \right)$.
	
	The proof of \textbf{(I)}. Let $t\in \left[ t_0,+\infty \right)$ be fixed and utilize the Lipschitz continuous of $\nabla \psi ^*$, $\nabla f$ and $\left\| \mathsf{X} +\mathsf{Y} \right\| ^2\leq 2\left\| \mathsf{X} \right\| ^2+2\left\| \mathsf{Y} \right\| ^2$ for any vectors $\mathsf{X}$ and $\mathsf{Y}$, then, for any $Y$ and $\hat{Y}$, we have 
	\begin{align*}
		&\left\| F\left( t,Y \right) -F\left( t,\hat{Y} \right) \right\| 
		\\
		\le& \left( \frac{t^2}{\alpha ^2} \right. \left( 4\beta ^2\delta _{\max}\left( A^TA \right) ^2+4\mathfrak{l} _{f}^{2}+\delta _{\max}\left( A^TA \right) \mathfrak{l} _{\psi ^*}^{2} \right. 
		\left. \,\,\,\left. +2\delta _{\max}\left( A^TA \right) \right) +\frac{\alpha ^2\left( 6+2\mathfrak{l} _{\psi ^*}^{2} \right)}{t^2} \right) ^{\frac{1}{2}}\left\| Y-\hat{Y} \right\|,
	\end{align*}
	where $\delta _{\max}\left(A\right) $ is the maximum singular value of matrix $A$ and the inequality holds from the Lipschitz continuous properties of $\nabla f $ and $\nabla \psi ^*$ that have Lipschitz constants $\mathfrak{l} _f$ and $\mathfrak{l} _{\psi ^*}$. 
	Let $\mathfrak{l} \left( t \right)$ be $\left( \frac{t^2\left( 4\beta ^2\delta _{\max}\left( A^TA \right) ^2+4\mathfrak{l} _{f}^{2}+\delta _{\max}\left( A^TA \right) \mathfrak{l} _{\psi ^*}^{2}+2\delta _{\max}\left( A^TA \right) \right)}{\alpha ^2} \right. 
	\left. +\frac{\alpha ^2\left( 6+2\mathfrak{l} _{\psi ^*}^{2} \right)}{t^2} \right) $, one has 
	$\left\| F\left( t,Y \right) -F\left( t,\hat{Y} \right) \right\| \leq \mathfrak{l} \left( t \right) \left\| Y-\hat{Y} \right\|.$ 
	
	Note that $\mathfrak{l} \left( t \right) $ is continuous on $\left[ t_0,+\infty \right) $. Hence $\mathfrak{l} \left( \cdot \right) $ is integrable on $\left[ t_0,\mathcal{T} \right] $ for all $0<t_0<\mathcal{T} <+\infty$.
	
	The proof of \textbf{(II)}. For arbitrary $Y\in \mathcal{X} \times \mathbb{R}^n\times \mathbb{R}^m\times \mathbb{R}^m$ and $0<t_0<\mathcal{T}$, we have 
	\begin{align*}
		&\int_{t_0}^{\mathcal{T}}{\left\| F\left( t,Y\left( t \right) \right) \right\|}dt
		\\
		=&\int_{t_0}^{\mathcal{T}}{\begin{array}{c}
						\begin{array}{c}
								\left( \frac{\alpha ^2}{t^2}\left\| \nabla \psi ^*\left( u\left( t \right) \right) -x\left( t \right) \right\| ^2+\frac{\alpha ^2}{t^2}\left\| v\left( t \right) -\lambda \left( t \right) \right\| ^2 \right.\\
							\end{array}\\
				\end{array}}
				\\
				&+\left. \frac{t^2}{\alpha ^2}\left\| A\left( \nabla \psi ^*\left( u\left( t \right) \right) -x^* \right) \right\| ^2+\frac{t^2}{\alpha ^2}\left\| \nabla f\left( x\left( t \right) \right) +\beta A^TA\left( x\left( t \right) -x^* \right) -A^Tv\left( t \right) \right\| ^2 \right) ^{\frac{1}{2}}dt
				\\
				\leq& \int_{t_0}^{\mathcal{T}}{\begin{array}{c}
								\begin{array}{c}
										\left( \frac{2\alpha ^2}{t^2}\left\| \nabla \psi ^*\left( u\left( t \right) \right) \right\| ^2 \right. +\frac{4t^2\beta ^2\delta _{\max}\left( A^TA \right) ^2}{\alpha ^2}\left\| x\left( t \right) \right\| ^2\\
									\end{array}\\
						\end{array}}
						\\
						&+\frac{2\alpha ^2}{t^2}\left\| v\left( t \right) \right\| ^2+\frac{2\alpha ^2}{t^2}\left\| \lambda \left( t \right) \right\| ^2+\frac{2\alpha ^2}{t^2}\left\| x\left( t \right) \right\| ^2+\frac{2t^2\delta _{\max}\left( A^TA \right)}{\alpha ^2}\left\| \nabla \psi ^*\left( u\left( t \right) \right) \right\| ^2
						\\
						&+\frac{2t^2\delta _{\max}\left( A^TA \right)}{\alpha ^2}\left\| x^* \right\| ^2+\frac{4t^2}{\alpha ^2}\left\| \nabla f\left( x\left( t \right) \right) \right\| ^2+\frac{4t^2\delta _{\max}\left( A^TA \right)}{\alpha ^2}\left\| v\left( t \right) \right\| ^2
						\\
						&\left. \begin{array}{c}
							\begin{array}{c}
									+\frac{4t^2\beta ^2\delta _{\max}\left( A^TA \right) ^2}{\alpha ^2}\left\| x^* \right\| ^2\\
								\end{array}\\
						\end{array} \right) ^{\frac{1}{2}}dt
						\\
						\leq& \sqrt{\left\| \nabla \psi ^*\left( u \right) \right\| ^2+\left\| x \right\| ^2+\left\| \lambda \right\| ^2+\left\| v \right\| ^2+\left\| x^* \right\| ^2+\left\| \nabla f\left( x\left( t \right) \right) \right\| ^2}
						\\
						&\times \left( \frac{8\alpha ^2}{t^2}+\frac{2t^2}{\alpha ^2}\left( 4\delta _{\max}\left( A^TA \right) +4\beta ^2\delta _{\max}\left( A^TA \right) ^2+2 \right) \right),
					\end{align*}
					and the conclusion holds by the continuity of the following function 
					$$t\rightarrow \left( \frac{8\alpha ^2}{t^2}+\frac{2t^2}{\alpha ^2}\left( 4\delta _{\max}\left( A^TA \right) +4\beta ^2\delta _{\max}\left( A^TA \right) ^2+2 \right) \right) ^{\frac{1}{2}}  \text{on}\,\,\left[ t_0,\mathcal{T} \right].$$
					
In view of the above statements \textbf{(I)} and \textbf{(II)}, the existence and uniqueness of a strong global solution for the dynamical system (\ref{ex-un}) can be obtained. This leads directly to  the existence and uniqueness of a strong global solution for APDMD.
\end{proof}

\subsection{Convergence rate of APDMD} With the help of the Lyapunov analysis method with Bregman divergence function, we will illustrate the accelerated convergence properties of the APDMD \eqref{PDM} as follows.

\begin{theorem} \label{Theorem 2.4}
	Suppose Assumption 3.1 holds. Let $\left( x\left( t \right) ,\lambda \left( t \right) \right) $ and $\left( x^*,\lambda ^* \right)$ be the solution trajectory and optimal solution for APDMD \eqref{PDM1} and problem \eqref{P1}, respectively. Then for any $\left( x\left( t_0 \right) ,u\left( t_0 \right) ,\lambda\left( t_0 \right) ,\upsilon\left( t_0 \right) \right)  \in \mathcal{X} \times \mathbb{R}^n\times \mathbb{R} ^m\times \mathbb{R} ^m$ with $\alpha \geq2$, the following statements are true:
	
	\textit{(I)}: The trajectories $x\left( t \right)$ and $\lambda \left( t \right)$ of APDMD \eqref{PDM1} are bounded for any $t\geq t_0>0$.
	
	(II): Let $\mathcal{L} _{\beta}\left( x\left( t \right) ,\lambda \left( t \right) \right) =f\left( x\left( t \right) \right) +\lambda \left( t \right) ^T\left( Ax\left( t \right) -b \right) +\frac{\beta}{2}\left\| Ax\left( t \right) -b \right\| ^2$ with $\beta \geq 0$. Then, we have 
	\begin{subequations}
		\begin{align}
			&\mathcal{L} _{\beta}\left(x\left( t \right),\lambda ^* \right) -\mathcal{L} _{\beta}\left( x^*,\lambda\left( t \right) \right) \leq \frac{\alpha ^2V\left( t_0 \right)}{t^2},\label{4.6}
			\\
			&\left\| Ax\left( t \right) -b \right\| ^2\leq \frac{2\alpha ^2V\left( t_0 \right)}{\beta t^2}, \label{4.61}
			\\
			&\int_0^{+\infty}{t\left( \mathcal{L} _{\beta}\left( x\left( t \right) ,\lambda ^* \right) -\mathcal{L} _{\beta}\left( x^*,\lambda \left( t \right) \right) \right) dt}<+\infty,\label{4.7}
			\\
			&\int_{t_0}^{+\infty}{t\left\| Ax\left( t \right) -b \right\| ^2}dt<+\infty.\label{4.71}
		\end{align}
	\end{subequations}
	(III): Let $\lambda ^*=\begin{cases}
		0, \,\,\,\,\,\,\,\,\,\,\,\,\,\,\,\,\,\,\,\,\,  if\,\,Ax\left( t \right) -b=0,\\
		\frac{Ax\left( t \right) -b}{\left\| Ax\left( t \right) -b \right\|},if\,\,Ax\left( t \right) -b\ne 0,\\
	\end{cases}$. For any $t\geq t_0>0$, it follows that 
	\begin{subequations}
		\begin{align}
			&0\le f\left( x\left( t \right) \right) -f\left( x^* \right) +\left\| Ax\left( t \right) -b \right\| +\frac{\beta}{2}\left\| Ax\left( t \right) -b \right\| ^2\le \frac{\alpha ^2V\left( t_0 \right)}{t^2},\label{4.8}
			\\
			&-\frac{\alpha \sqrt{V\left( t_0 \right)}}{t}\leq f\left( x\left( t \right) \right) -f\left( x^* \right) \leq\frac{\alpha ^2V\left( t_0 \right)}{t^2}.\label{4.9}
		\end{align}
	\end{subequations}
\end{theorem}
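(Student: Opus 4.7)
The strategy is a Lyapunov-function argument built from three complementary pieces. The plan is to define
\[
V(t) = \frac{t^2}{\alpha^2}\bigl[\mathcal{L}_\beta(x(t),\lambda^*) - \mathcal{L}_\beta(x^*,\lambda(t))\bigr] + D_\psi\bigl(x^*, \nabla\psi^*(u(t))\bigr) + \tfrac{1}{2}\|v(t)-\lambda^*\|^2,
\]
and to prove $\dot V(t)\le 0$ (with a strictly negative residual when $\alpha>2$), whence the bounds follow by integrating and then extracting the individual quantities from $V$.

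First I would differentiate $V$ along trajectories of the APDMD \eqref{PDM}, using the Bregman-in-the-dual-variable identity
\[
\tfrac{d}{dt} D_\psi\bigl(x^*,\nabla\psi^*(u(t))\bigr) = \langle \nabla\psi^*(u(t))-x^*,\, \dot u(t)\rangle,
\]
which follows from the Fenchel identity $D_\psi(x^*,\nabla\psi^*(u)) = \psi(x^*)+\psi^*(u) - \langle u,x^*\rangle$ available under Assumption~3.1. Substituting the four ODEs of APDMD turns $\dot V$ into a combination of inner products of the form $\langle \nabla\psi^*(u)-x^*,\nabla f(x)+\beta A^\top(Ax-b)+A^\top v\rangle$ and $\langle v-\lambda^*, A\nabla\psi^*(u)-b\rangle$. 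Using convexity of $f$ in the form $f(x)+\nabla f(x)^\top(x^*-x)\le f(x^*)$, the identities $Ax^*=b$ and $\mathcal{L}_\beta(x^*,\lambda)=f(x^*)$, and regrouping the $A^\top v$ and $A\nabla\psi^*(u)$ terms to cancel across the Bregman and Euclidean pieces, what remains is
\[
\dot V(t) \le -\tfrac{(\alpha-2)\,t}{\alpha^2}\bigl[\mathcal{L}_\beta(x(t),\lambda^*) - \mathcal{L}_\beta(x^*,\lambda(t))\bigr],
\]
which is $\le 0$ for $\alpha\ge 2$.

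Second, from $V(t)\le V(t_0)$ and the nonnegativity of the Bregman and Euclidean components I read off \eqref{4.6} directly. For \eqref{4.61}, the saddle-point inequality together with convexity gives $\mathcal{L}_\beta(x(t),\lambda^*)-f(x^*)\ge \tfrac{\beta}{2}\|Ax(t)-b\|^2$, so dividing \eqref{4.6} by $\beta/2$ yields \eqref{4.61}. Boundedness of $x(t)$ and $\lambda(t)$ (claim (I)) then follows: $x(t)\in\mathcal{X}$ by Lemma~\ref{feasibility1}, while boundedness of $\lambda(t)$ is extracted from boundedness of $v(t)-\lambda^*$ (via the $\tfrac12\|v-\lambda^*\|^2$ term of $V$) combined with $\lambda(t) = v(t)-\tfrac{t}{\alpha}\dot\lambda(t)$ obtained from the third line of APDMD. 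For the integral estimates, integrating the sharper inequality above from $t_0$ to $T$ and letting $T\to\infty$ yields \eqref{4.7}; then \eqref{4.71} follows by applying the same $\tfrac{\beta}{2}\|Ax(t)-b\|^2$ lower bound pointwise inside the integral.

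Finally, for part (III) I would exploit the freedom in the choice of $\lambda^*$ in \eqref{4.6}. Picking $\lambda^* = (Ax(t)-b)/\|Ax(t)-b\|$ when $Ax(t)\ne b$ (and $\lambda^*=0$ otherwise) makes $\lambda^{*\top}(Ax(t)-b) = \|Ax(t)-b\|$, which produces \eqref{4.8} after noting that each term on the left is nonnegative (convexity of $f$ plus the feasibility penalty). The upper bound in \eqref{4.9} then follows by dropping the two nonnegative feasibility terms in \eqref{4.8}; the lower bound comes from convexity combined with KKT, $f(x(t))-f(x^*)\ge -\lambda^{*\top}(Ax(t)-b)\ge -\|\lambda^*\|\cdot\|Ax(t)-b\|$, together with the $1/t$ decay of $\|Ax(t)-b\|$ from \eqref{4.61}.

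The main obstacle will be the precise bookkeeping in the $\dot V$ computation: the nonlinear map $\nabla\psi^*$ couples the primal variable $x$ with its look-ahead $\nabla\psi^*(u)$, so the convexity inequality must be applied at $x$ (not at $\nabla\psi^*(u)$), and the augmented-Lagrangian quadratic $\tfrac{\beta}{2}\|Ax-b\|^2$ requires one extra completion-of-squares step to absorb into the saddle-point slack. The condition $\alpha\ge 2$ will emerge precisely from matching the cross-coefficient $\tfrac{2t}{\alpha^2}$ (from differentiating the $t^2/\alpha^2$ prefactor) against the $\tfrac{t}{\alpha}$ residual left after invoking convexity, and threading this single sign-balance carefully is the delicate step.
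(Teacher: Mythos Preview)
Your proposal is correct and follows essentially the same route as the paper. Your Bregman term $D_\psi\bigl(x^*,\nabla\psi^*(u)\bigr)$ coincides with the paper's $D_{\psi^*}(u,u^*)$ via the Fenchel identity you quote, so the Lyapunov function and its differentiation are identical to the paper's $V=V_1+V_2+V_3$. One small refinement the paper makes: by exploiting the exact quadratic nature of the augmentation term (not just convexity of $\mathcal L_\beta(\cdot,\lambda^*)$), the $\dot V$ bound picks up an additional $-\tfrac{\beta t}{2\alpha}\|Ax(t)-b\|^2$, which gives \eqref{4.71} directly even when $\alpha=2$; your derivation of \eqref{4.71} via \eqref{4.7} requires $\alpha>2$.
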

\begin{proof}
Consider a Lyapunov function $V:\left[ t_0, T \right) \rightarrow \left[ 0,+\infty \right) $ which is given by 
\begin{align}\label{V}
V\left( t \right) =&V_1\left( t \right) +V_2\left( t \right) +V_3\left( t \right),
\end{align}
where
\begin{equation*}
\begin{cases}
  V_1\left( t \right) =\frac{t^2}{\alpha ^2}\left( \mathcal{L} _{\beta}\left( x\left( t \right) ,\lambda ^* \right) -\mathcal{L} _{\beta}\left( x^*,\lambda \left( t \right) \right) \right)\\
	\,\,\,\,\,\,\,\,\,\,\,\,\,\,\,\,=\frac{t^2}{\alpha ^2}\left( f\left( x\left( t \right) \right) -f\left( x^* \right) +\frac{\beta}{2}\left\| Ax\left( t \right) -b \right\| ^2+A^T\lambda ^*\left( x\left( t \right) -x^* \right) \right) ,\\
		V_2\left( t \right) =D_{\psi ^*}\left( u\left( t \right) ,u^* \right) =\psi ^*\left( u\left( t \right) \right) -\psi ^*\left( u^* \right) -\nabla \psi ^*\left( u^* \right) ^T\left( u\left( t \right) -u^* \right) ,\\
		V_3\left( t \right) =D_h\left( v\left( t \right) ,\lambda ^* \right) =\frac{1}{2}\left\| v\left( t \right) -\lambda ^* \right\| ^2,\\
\end{cases}
\end{equation*}
the $D_{\psi ^*}\left( u, u^* \right)$, $D_h\left( v,\lambda^* \right) $ are two Bregman divergences associated with suitable functions $\psi ^*$ and $h$ that are determined by different requirements, respectively. 

Note that the Laypunov function $V_1\left( t \right)$ is positive and radially unbounded since 
$\mathcal{L} _{\beta}\left( x\left( t \right) ,\lambda ^* \right) -\mathcal{L} _{\beta}\left( x^*,\lambda \left( t \right) \right) \geq 0, \forall \left( x\left( t \right) ,\lambda \left( t \right) \right) \in \mathcal{X} \times \mathbb{R} ^m $ in \eqref{sd}, $\alpha \geq 2$ and $t\geq t_0>0$. For $V_2\left( t \right)$ and  $V_3\left( t \right)$, since $\psi ^*\left( \cdot \right)$ and $h\left( \cdot \right) =\frac{1}{2}\left\| \cdot \right\| ^2$ are convex \textit{(Assumption 3.1 holds}), we get $D_{\psi ^*}\left( u\left( t \right),u^* \right) \geq 0, D_h\left( v\left( t \right),\lambda ^* \right) \geq 0$, moreover, $\left\| u\left( t \right) \right\| \rightarrow +\infty , V_2\left( t \right) \rightarrow +\infty $ and $\left\| v\left( t \right) \right\| \rightarrow +\infty , V_3\left( t \right) \rightarrow +\infty $. Thus, both $V_2\left( t \right)$ and  $V_3\left( t \right)$ are positive and radially unbounded.

The derivatives of $V_1\left( t \right)$, $V_2\left( t \right)$ and $V_3\left( t \right) $ along the trajectory of APDMD with $x\left( t \right) \in \mathcal{X} $ are given by
\begin{equation}\label{V1}
\begin{split}
\dot{V}_1\left( t \right) =&\frac{2t}{\alpha ^2}\left( \mathcal{L} _{\beta}\left( x\left( t \right) ,\lambda ^* \right) -\mathcal{L} _{\beta}\left( x^*,\lambda \left( t \right) \right) \right) +\left( \nabla \psi ^*\left( u\left( t \right) \right) -x\left( t \right) \right) ^T
\\
\,\,&\times \frac{t}{\alpha}\left( \nabla f\left( x\left( t \right) \right) +\frac{\beta}{2}A^T\left( Ax\left( t \right) -b \right) +A^T\lambda ^* \right) 
\\
=&\frac{2t}{\alpha ^2}\left( f\left( x\left( t \right) \right) -f\left( x^* \right) +\frac{\beta}{2}\left\| Ax\left( t \right) -b \right\| ^2+\left( A^T\lambda ^* \right) ^T\left( x\left( t \right) -x^* \right) \right) 
\\
&+\frac{t}{\alpha}\left( \nabla f\left( x\left( t \right) \right) +\beta A^TA\left( x\left( t \right) -x^* \right) +A^T\lambda ^* \right) ^T
\\
&\times \left( \nabla \psi ^*\left( u\left( t \right) \right) -\nabla \psi ^*\left( u^* \right) \right) +\left( \nabla \psi ^*\left( u^* \right) -x\left( t \right) \right) ^T
\\
&+\frac{t}{\alpha}\left( \nabla f\left( x\left( t \right) \right) +\beta A^TA\left( x\left( t \right) -x^* \right) +A^T\lambda ^* \right),
\end{split}
\end{equation}
\begin{equation}\label{V2}
\begin{split}
\dot{V}_2\left( t \right) =&\left( \nabla \psi ^*\left( u\left( t \right) \right) -\nabla \psi ^*\left( u^* \right) \right) ^T\dot{u}\left( t \right) 
\\
=&-\frac{t}{\alpha}\left( \nabla \psi ^*\left( u\left( t \right) \right) -\nabla \psi ^*\left( u^* \right) \right) ^T
\\
&\times \left( \nabla f\left( x\left( t \right) \right) +\beta A^T\left( Ax\left( t \right) -b \right) +A^Tv\left( t \right) \right) 
\\
=&-\frac{t}{\alpha}\left( \nabla \psi ^*\left( u\left( t \right) \right) -\nabla \psi ^*\left( u^* \right) \right) ^T
\\
&\times \left( \nabla f\left( x\left( t \right) \right) +\beta A^TA\left( x\left( t \right) -x^* \right) +A^T\lambda ^* \right) 
\\
&-\frac{t}{\alpha}\left( \nabla \psi ^*\left( u\left( t \right) \right) -\nabla \psi ^*\left( u^* \right) \right) ^T\left( A^Tv\left( t \right) -A^T\lambda ^* \right),
\end{split}
\end{equation}
\begin{align}\label{V3}
\dot{V}_3\left( t \right) =\left( v\left( t \right) -\lambda ^* \right) ^T\dot{v}=\frac{t}{\alpha}\left( v\left( t \right) -\lambda ^* \right) ^TA\left( \nabla \psi ^*\left( u\left( t \right) \right) -x^* \right).
\end{align}

Adding \eqref{V1}, \eqref{V2} and  \eqref{V3} together, and using $Ax^*=b$, we have
\begin{equation}\label{Vend}
\begin{split}
\dot{V}\left( t \right) =&\dot{V}_1\left( t \right) +\dot{V}_2\left( t \right) +\dot{V}_3\left( t \right) 
\\
=&\frac{2t}{\alpha ^2}\left( f\left( x\left( t \right) \right) -f\left( x^* \right) +\frac{\beta}{2}\left\| Ax\left( t \right) -b \right\| ^2+\left( A^T\lambda ^* \right) ^T\left( x\left( t \right) -x^* \right) \right) 
\\
&+\frac{t}{\alpha}\left( \nabla f\left( x\left( t \right) \right) +\frac{\beta}{2}A^T\left( Ax\left( t \right) -b \right) +A^T\lambda ^* \right) ^T\left( x^*-x\left( t \right) \right) 
\\
\le &\frac{2t}{\alpha ^2}\left( \mathcal{L} _{\beta}\left( x\left( t \right) ,\lambda ^* \right) -\mathcal{L} _{\beta}\left( x^*,\lambda \right) \right) -\frac{t}{\alpha}\left( \mathcal{L} _{\beta}\left( x^*,\lambda ^* \right) -\mathcal{L} _{\beta}\left( x\left( t \right) ,\lambda ^* \right) \right) 
\\
&-\frac{\beta t}{2\alpha}\left( Ax\left( t \right) -b \right) A\left( x^*-x\left( t \right) \right) 
\\
=&-t\frac{\alpha -2}{\alpha}\left( \mathcal{L} _{\beta}\left( x\left( t \right) ,\lambda ^* \right) -\mathcal{L} _{\beta}\left( x^*,\lambda \left( t \right) \right) \right)-\frac{\beta t}{2\alpha}\left\| Ax\left( t \right) -b \right\| ^2\le 0,
\end{split}
\end{equation}
where the second equation holds from $\nabla \psi ^*\left( u^* \right) =x^*$, the first inequality is satisfied due to the convexity of $\mathcal{L} _{\beta}\left(x\left( t \right),\lambda ^* \right) $ of $x\left( t \right)$ with fixed $\lambda ^*$, the third equation holds because of $\mathcal{L} _{\beta}\left(x^*,\lambda\left( t \right) \right) =\mathcal{L} _{\beta}\left( x^*,\lambda ^* \right) $ and the last inequality is established since $\alpha \geq 2$ and $\mathcal{L}_{\beta}\left( x\left( t \right),\lambda^* \right) -\mathcal{L} _{\beta}\left(x^*,\lambda\left( t \right)\right) \geq 0$. 

Since the Lyapunov function $V\left( t \right)$ is positive and radically unbounded with any $t\geq t_0>0$. This implies that the trajectories of $x\left( t \right)$ and $\lambda \left( t \right)$ are bounded for any $t\geq t_0>0$. The proof of (I) therefore completed.

\textit{(II)}: From \eqref{Vend}, we have that $\dot{V}\left( t \right)\leq0$, i.e., ${V}\left( t \right)$ is nonincreasing on $\left[ t_0,+\infty \right)$. Thus, for any $t\geq t_0>0$ it yields to $\frac{t^2}{\alpha ^2}\left( \mathcal{L} _{\beta}\left( x\left( t \right),\lambda ^* \right) -\mathcal{L} _{\beta}\left( x^*,\lambda\left( t \right) \right) \right) =V_1\left( t \right) \leq V\left( t \right) \leq V\left( t_0 \right)$, i.e., $\mathcal{L} _{\beta}\left( x,\lambda ^* \right) -\mathcal{L} _{\beta}\left( x^*,\lambda \right) \leq \frac{\alpha ^2V\left( t_0 \right)}{t^2}$, thus the condition \eqref{4.6} holds. In addition, the \eqref{4.6} further implies $\left\| Ax\left( t \right) -b \right\| ^2\leq \frac{2\alpha ^2V\left( t_0 \right)}{\beta t^2}$, i.e., \eqref{4.61} is satisfied.

Since $\dot{V}\left( t \right) \leq -t\frac{\alpha -2}{\alpha ^2}\left( \mathcal{L} _{\beta}\left( x\left( t \right) ,\lambda ^* \right) -\mathcal{L} _{\beta}\left( x^*,\lambda \left( t \right) \right) \right) -\frac{\beta t}{2\alpha}\left\| Ax\left( t \right) -b \right\| ^2$, thus, integrating the aboved inequality from $t_0$ to $+\infty $ and rearranging it, we have
\begin{align*}
\frac{\alpha -2}{\alpha ^2}&\int_{t_0}^{+\infty}{t\left( \mathcal{L} _{\beta}\left( x\left( t \right) ,\lambda ^* \right) -\mathcal{L} _{\beta}\left( x^*,\lambda \left( t \right) \right) \right)}dt+\frac{\beta}{2\alpha}\int_{t_0}^{+\infty}{t\left\| Ax\left( t \right) -b \right\| ^2}dt
\\
=&V\left( t_0 \right) -V\left( +\infty \right) \leq V\left( t_0 \right) <+\infty,
\end{align*}
and it combines with $\mathcal{L} _{\beta}\left( x\left( t \right) ,\lambda ^* \right) -\mathcal{L} _{\beta}\left( x^*,\lambda \left( t \right) \right) \geq 0, \forall x\left( t \right) \in \mathcal{X}$ leads to 
\\ $\int_{t_0}^{+\infty}{t\left\| Ax\left( t \right) -b \right\| ^2}dt<+\infty $, i.e., the condition \eqref{4.7} holds. Moreover, \eqref{4.7} combines with $\left\| Ax\left( t \right) -b \right\| ^2\geq 0$ implies $\int_{t_0}^{+\infty}{t\left\| Ax\left( t \right) -b \right\| ^2}dt<+\infty $, i.e., the \eqref{4.71} holds.

\textit{(III)}: The equation $\lambda ^*=\begin{cases}
		0, \,\,\,\,\,\,\,\,\,\,\,\,\,\,\,\,\,\,\,\,\,\,  if\,\,Ax\left( t \right) -b=0,\\
		\frac{Ax\left( t \right) -b}{\left\| Ax\left( t \right) -b \right\|},if\,\,Ax\left( t \right) -b\ne 0,\\
	\end{cases}$ and \eqref{4.6} yields $0\le f\left( x\left( t \right) \right) -f\left( x^* \right) +\left\| Ax\left( t \right) -b \right\| +\frac{\beta}{2}\left\| Ax\left( t \right) -b \right\| ^2\le \frac{\alpha ^2V\left( t_0 \right)}{t^2}$. 
	
	Observing that the upper-bound of \eqref{sd} in \eqref{saddle} implies
	\begin{equation}\label{v_in4}
	\begin{split}
	f\left( x\left( t \right) \right) -f\left( x^* \right)
	\geq& -\left( Ax\left( t \right) -b \right) ^T\lambda ^* 
	\\
	\geq& -\left\| Ax\left( t \right) -b \right\| \geq -\frac{\alpha \sqrt{V\left( t_0 \right)}}{t}.
	\end{split}
	\end{equation}
	
	Combining the \eqref{v_in4}, $\left\| Ax\left( t \right) -b \right\| \geq 0$ and  $f\left( x\left( t \right) \right) -f\left( x^* \right) \leq \frac{\alpha ^2V\left( t_0 \right)}{t^2}$, we deduce that  $-\frac{\alpha \sqrt{V\left( t_0 \right)}}{t}\leq f\left( x\left( t \right) \right) -f\left( x^* \right) \leq \frac{\alpha ^2V\left( t_0 \right)}{t^2}$ with any $t\geq t_0>0$. Finally, the conclusion (III) holds.
\end{proof}

\subsection{Examples of the APDMD}\label{Sec2.5} In this subsection, we will illustrate some examples of the APDMD (\ref{PDM}, i.e., (\ref{PDM1}, in other words, the APDMD can reduce to some classical and new dynamical approaches when choosing different constraint set $\mathcal{X}$, i.e.,  $\mathcal{X}$ is an Euclidean space, $\mathcal{X}$ is a positive-orthant constrained set, $\mathcal{X}$ is a unit simplex set and $\mathcal{X}$ is a closed and convex set, and its projection operator  $P_{\mathcal{X}}$ has a closed form solution (the detailed description is given in 

\subsection{Examples of the APDMD}\label{Sec2.5} In this subsection, we will illustrate some examples of the APDMD \eqref{PDM}, i.e., \eqref{PDM1}, in other words, the APDMD can reduce to some classical and new dynamical approaches when choosing different constraint set $\mathcal{X}$, i.e.,  $\mathcal{X}$ is an Euclidean space, $\mathcal{X}$ is a positive-orthant constrained set, $\mathcal{X}$ is a unit simplex set and $\mathcal{X}$ is a closed and convex set, and its projection operator  $P_{\mathcal{X}}$ has a closed form solution.

\subsection{Examples of the APDMD}
\textbf{Case 1:} If $\mathcal{X}$ is an Euclidean space, i.e., $\mathcal{X} =\mathbb{R}^n$. Let $\psi \left( x\left( t \right)+\frac{\alpha}{t}\dot{x}\left( t \right) \right)$$=\frac{1}{2}\left\| x\left( t \right)+\frac{\alpha}{t}\dot{x}\left( t \right) \right\| ^2$, then, one has $\nabla \psi \left( x\left( t \right)+\frac{\alpha}{t}\dot{x}\left( t \right) \right) =u\left( t \right)$, $\psi ^*\left( u\left( t \right) \right) =\frac{1}{2}\left\| u\left( t \right) \right\| ^2$, $\nabla \psi ^*\left( u \left( t \right)\right)=u\left( t \right)$, $\nabla ^2\psi ^*\left( \nabla \psi \left( x\left( t \right)\left( t \right)+\frac{\alpha}{t}\dot{x}\left( t \right) \right) \right) \equiv I_n$. The APDMD \eqref{PDM1} reduces to the classical accelerated primal-dual dynamical approaches \cite{He2021ConvergenceRO, Zeng2019DynamicalPA, Bo2021ImprovedCR, Attouch2021FastCO}.
\begin{equation}\label{PDM_case1}
	\begin{cases}
			\ddot{x}\left( t \right) +\frac{\alpha +1}{t}\dot{x}\left( t \right) 
			\\
			\,\,\,\,\,\,\,\,\,\,\,\,\,+\left( \nabla f\left( x\left( t \right) \right) +\beta A^T\left( Ax\left( t \right) -b \right) +A^T\left( \lambda \left( t \right) +\frac{t}{\alpha}\dot{\lambda}\left( t \right) \right) \right) =0,
			\\
			\ddot{\lambda}\left( t \right) +\frac{\alpha +1}{t}\dot{\lambda}\left( t \right) -A\left( x\left( t \right) +\frac{t}{\alpha}\dot{x}\left( t \right) \right) +b=0,
			\\
			x\left( t_0 \right) =x_0\in \mathcal{X}, \dot{x}\left( t_0 \right) =0, \lambda \left( t_0 \right) =\lambda _0, \dot{\lambda}\left( t_0 \right) =0.
		\end{cases}
\end{equation} 

\textbf{Case 2:} \cite{Banerjee2005ClusteringWB} If $\mathcal{X}$ is a positive-orthant constrained set, i.e., $\mathcal{X} =\mathbb{R}_{+}^{n}$. Let the negative entropy function $\psi \left( x \right) =\sum_{i=1}^n{x_i\ln x_i}$ be a distance generating function in this case. Then, one has $\nabla \psi \left( x\right) =\mathrm{col}\left( 1+\ln x_1,...,1+\ln x_n \right)$ and $\psi ^*\left( u\right) =\sum_{i=1}^n{e^{u_i-1}}$, $\nabla \psi ^*\left( u \right) =\mathrm{col}\left( e^{u_1-1},...,e^{u_n-1} \right)$, $\nabla ^2\psi ^*\left( u\left( t \right) \right) =\mathrm{diag}\left( e^{u_1-1},..., e^{u_n-1} \right) $. Thus, for all $x\in \mathbb{R}_{+}^{n}$, we have
\begin{equation*}
	\begin{split}
			&\nabla ^2\psi ^*\left( \nabla \psi \left( x\left( t \right) +\frac{t}{\alpha}\dot{x}\left( t \right) \right) \right) 
			\\
			=&\mathrm{diag}\left( x_1\left( t \right) +\frac{t}{\alpha}x_1\left( t \right) ,...,x_n\left( t \right) +\frac{t}{\alpha}{t}x_n\left( t \right) \right) =\mathrm{diag}\left( x\left( t \right) +\frac{t}{\alpha}\dot{x}\left( t \right) \right).
		\end{split}
\end{equation*}
Correspondingly, APDMD \eqref{PDM1} is turned into 
\begin{equation}\label{PDM_case2}
	\begin{cases}
			\ddot{x}\left( t \right) +\frac{\alpha +1}{t}\dot{x}\left( t \right) +\mathrm{diag}\left( x\left( t \right) +\frac{t}{\alpha}\dot{x}\left( t \right) \right) \left( \nabla f\left( x\left( t \right) \right) \right. 
			\\
			\left. \,\,\,\,\,\,\,\,\,\,\,\,\,\,\,\,\,\,\,\,\,\,\,\,\,\,\,\,\,\,\,\,\,\,\,\,\,\,\,\,\,+\beta A^T\left( Ax\left( t \right) -b \right) +A^T\left( \lambda \left( t \right) +\frac{t}{\alpha}\dot{\lambda}\left( t \right) \right) \right) =0,
			\\
			\ddot{\lambda}\left( t \right) +\frac{\alpha +1}{t}\dot{\lambda}\left( t \right) -A\left( x\left( t \right) +\frac{t}{\alpha}\dot{x}\left( t \right) \right) +b=0,
			\\
			x\left( t_0 \right) =x_0\in \mathcal{X}, \dot{x}\left( t_0 \right) =\dot{x}_0, \lambda \left( t_0 \right) =\lambda _0, \dot{\lambda}\left( t_0 \right) =\dot{\lambda}_0,
		\end{cases}
\end{equation} 

In addition, for the constrained set $\mathcal{X} =\mathbb{R}_{+}^{n}$, the distance function can also be selected as $\psi \left( x \right) =-\sum_{i=1}^n{\ln x_i}$. Then, we have $\nabla \psi \left( x  \right) =\mathrm{col}\left( -\frac{1}{x_1} ,...,-\frac{1}{x_n}\right) $ and the Bregman divergence of $\psi$ for $x $ and $y $ is $D_{\psi}\left( x ,y\right) =\sum_{i=1}^n{\left( \frac{x_i}{y_i}\ln \frac{x_i}{y_i}-1 \right)}$ (named the \textbf{Itakura-Saito} divergence). Further we have $\psi ^*\left( u\right) =-\sum_{i=1}^n{\left( 1+\ln \left( -u_i\right) \right)}$ on $\mathbb{R}_{-}^{n}$, $\nabla \psi ^*\left( u \right) =\mathrm{col}\left( -\frac{1}{u_1},...,-\frac{1}{u_n} \right)$ and $\nabla ^2\psi ^* =\mathrm{diag}\left( \frac{1}{\left( u_1 \right) ^2},...,\frac{1}{\left( u_n \right) ^2} \right) $ on $\mathbb{R}_{-}^{n}$. To sum up, one has 
\begin{equation}\label{ISD}
\begin{split}
&\nabla ^2\psi ^*\left( \nabla \psi \left( x\left( t \right)+\frac{t}{\alpha}\dot{x}\left( t \right) \right) \right)
\\
=&\mathrm{diag}\left( \left( x_1\left( t \right)+\frac{t}{\alpha}\dot{x}_1\left( t \right) \right) ^2,...,\left( x_n+\frac{t}{\alpha}\dot{x}_n \left( t \right)\right) ^2 \right).
\end{split}
\end{equation}
 
\textbf{Case 3:} \cite{Beck2003MirrorDA} If $\mathcal{X}$ is a unit simplex set, i.e., $\mathcal{X} =\bigtriangleup=\left\{ x\in \mathbb{R}_{+}^{n}|\sum_{i=1}^n{x_i=1} \right\}$. Considering a distance-generating function $\psi \left( x \right) =\sum_{i=1}^n{x_i\ln x_i}+I_{\mathcal{X}}\left( x \right)$, where $\sum_{i=1}^n{x_i\ln x_i}$ is the negative entropy function and $I_{\mathcal{X}}\left( x \right)$ is the indicator function on $\mathcal{X}$ and its Bregman divergence is $D_{\psi}\left( x,y \right) =\sum_{i=1}^n{x_i\ln \frac{x_i}{yi}}$ between the vectors $x$, $y$ (called \textbf{Kullback-Leibler} divergence). Then, we have
\begin{align*}
	&\nabla \psi \left( x \right) =\mathrm{col}\left( 1+\ln x_1,...,1+\ln x_n \right), \forall x\in \mathcal{X}, 
	\\
	&\psi ^*\left( u \right) =\ln \left( \sum_{i=1}^n{e^{u_i}} \right), \nabla \psi ^*\left( u \right) =\mathrm{col}\left( \frac{e^{u_1}}{\sum_{i=1}^n{e^{u_i}}},...,\frac{e^{u_n}}{\sum_{i=1}^n{e^{u_i}}} \right), 
	\\
	&\left[ \nabla ^2\psi ^*\left( u \right) \right] _{ij}=\frac{e^{u_i}\sum_{i=1}^n{e^{u_i}}-e^{u_j}e^{u_i}}{\left( \sum_{i=1}^n{e^{u_i}} \right) ^2}, \forall \,\,i,j\in n.	
\end{align*}
Furthermore, for any $i,j =1,...,n$, we have
\begin{align*}
	&\left[ \nabla ^2\psi ^*\left( \nabla \psi \left( x\left( t \right) +\frac{t}{\alpha}\dot{x}\left( t \right) \right) \right) \right] _{ij}
\\
=&\left( x_i\left( t \right) +\frac{t}{\alpha}\dot{x}_i\left( t \right) \right) -\left( x_i\left( t \right) +\frac{t}{\alpha}\dot{x}_i\left( t \right) \right) \left( x_j\left( t \right) +\frac{t}{\alpha}\dot{x}_j\left( t \right) \right).
\end{align*}

Therefore, the APDMD \eqref{PDM1} reduces to
\begin{align}\label{PDM_case3}
\begin{cases}
   \ddot{x}_i\left( t \right) +\frac{\alpha +1}{t}\dot{x}_i\left( t \right) +\left( x_i\left( t \right) +\frac{t}{\alpha}\dot{x}_i\left( t \right) \right) \left( \left( \nabla f\left( x\left( t \right) \right) +\beta A^T\left( Ax\left( t \right) -b \right) \right. \right. 
\\
\left.\,\,\,\,\,\,\,\,\,\,\,\,\,\,+A^T\left( \lambda \left( t \right) +\frac{t}{\alpha}\dot{\lambda}\left( t \right) \right) \right) _i\begin{array}{c}
		-\left( x\left( t \right) +\frac{t}{\alpha}\dot{x}\left( t \right) \right) ^T\\
	\end{array}\left( \nabla f\left( x\left( t \right) \right) \right. 
	\\
	\left. \left.\,\,\,\,\,\,\,\,\,\,\,\,\,\, +\beta A^T\left( Ax\left( t \right) -b \right) +A^T\left( \lambda \left( t \right) +\frac{t}{\alpha}\dot{\lambda}\left( t \right) \right) \right) \right) =0,\forall i=1,...,n,
		\\
	\ddot{\lambda}\left( t \right) +\frac{\alpha +1}{t}\dot{\lambda}\left( t \right) -A\left( x\left( t \right) +\frac{t}{\alpha}\dot{x}\left( t \right) \right) +b=0,
		\\
	x\left( t_0 \right) =x_0\in \mathcal{X}, \dot{x}\left( t_0 \right) =\dot{x}_0, \lambda \left( t_0 \right) =\lambda _0, \dot{\lambda}\left( t_0 \right) =\dot{\lambda}_0,
	\end{cases}
	\end{align}
	where the $\left( \nabla f\left( x\left( t \right) \right) +\beta A^T\left( Ax\left( t \right) -b \right) +A^T\left( \lambda \left( t \right) +\frac{t}{\alpha}\dot{\lambda}\left( t \right) \right) \right) _i$ is the $i$-th element of $\nabla f\left( x\left( t \right) \right) +\beta A^T\left( Ax\left( t \right) -b \right) +A^T\left( \lambda \left( t \right) +\frac{t}{\alpha}\dot{\lambda}\left( t \right) \right)$.
	
	\textbf{Case 4:} If $\mathcal{X}$ is a closed and convex set, and its projection operator  $P_{\mathcal{X}}$ has a closed form solution, moreover, setting $\psi \left( x \right)$ be $\frac{1}{2}\left\| x \right\| ^2+I_{\mathcal{X}}\left( x \right)$, one has $\partial \psi \left( x \right) =x+\mathcal{N} _{\mathcal{X}}\left( x \right)$ and $D_{\psi}\left( x,y \right) =\frac{1}{2}\left\| x-y \right\| ^2, \forall x, y\in \mathcal{X}$. Furthermore, one has $\psi ^*\left( u \right) =\frac{1}{2}\left( \left\| u \right\| ^2-\left\| u-P_{\mathcal{X}}\left( u \right) \right\| ^2 \right)$, $\nabla \psi ^*\left( u \right) =P_{\mathcal{X}}\left( u \right) $. According to the above discussion, the APDMD \eqref{PDM} reduces to a new accelerated primal-dual projection dynamical (APDPD) approach : 
	\begin{equation}\label{APDPD}
		\begin{cases}
				\dot{x}\left( t \right)=\frac{\alpha}{t}\left( P_{\mathcal{X}}\left( u\left( t \right) \right) -x\left( t \right) \right),\\
				\dot{u}\left( t \right)=-\frac{t}{\alpha}\left( \nabla f\left( x\left( t \right) \right) +\beta A^T\left( Ax\left( t \right)-b \right) +A^Tv\left( t \right) \right),\\
				\dot{\lambda}\left( t \right)=\frac{\alpha}{t}\left( v\left( t \right)-\lambda\left( t \right) \right),\\
				\dot{v}\left( t \right)=\frac{t}{\alpha}\left( AP_{\mathcal{X}\left( t \right)}\left( u\left( t \right) \right) -b \right),\\	
					x\left( t_0 \right) =x_0, u\left( t_0 \right) =u_0\,\,\mathrm{with}\,\,x_0=P_{\mathcal{X}}\left( u_0 \right) \in \mathcal{X}; \lambda \left( t_0 \right) =\lambda _0, v\left( t_0 \right)=0.
			\end{cases}    
	\end{equation} 

\subsection{APDMD for DCCP \eqref{P2} in the smooth case}
In this subsection, inspired by the ADPDMD \eqref{PDM1} and distributed consensus theorem, an accelerated distributed primal-dual mirror dynamical (ADPDMD) approach for DCCP \eqref{P2} in the smooth case is proposed and discussed.

For DCCP \eqref{P2} with smooth convex objective functions, the proposed APDMD with $\alpha \geq2$ is given by   

\begin{equation}\label{ADPDMD1}
	\begin{cases}
		\dot{x}_i\left( t \right) =\frac{\alpha}{t}\left( \nabla \psi _{i}^{*}\left( u_i\left( t \right) \right) -x_i\left( t \right) \right),\\
		\dot{u}_i\left( t \right) =-\frac{t}{\alpha}\left( \nabla f_i\left( x_i\left( t \right) \right) +\beta \sum_{j=1}^n{\mathrm{a}_{ij}\left( x_i\left( t \right) -x_j\left( t \right) \right)} \right.\\
		\left. \,\,\,\,\,\,\,\,\,\,\,\,\,\,\,\,\,\,\,\,\,+\sum_{j=1}^n{\mathrm{a}_{ij}\left( \lambda _i\left( t \right) +\frac{t}{\alpha}\dot{\lambda}_i\left( t \right) -\lambda _j\left( t \right) -\frac{t}{\alpha}\dot{\lambda}_j\left( t \right) \right)} \right) ,\\
		\dot{\lambda}_i\left( t \right) =\frac{\alpha}{t}\left( v_i\left( t \right) -\lambda _i\left( t \right) \right),
		\\
		\dot{v}_i\left( t \right) =\frac{t}{\alpha}\sum_{j=1}^n{\mathrm{a}_{ij}\left( \nabla \psi _{i}^{*}\left( u_i\left( t \right) \right) -\nabla \psi _{j}^{*}\left( u_j\left( t \right) \right) \right)},\\
		x_i\left( t_0 \right) =x_{i,0},\nabla \psi _{i}^{*}\left( u_i\left( t_0 \right) \right) =x_{i,0},
		\\
		\lambda _i\left( t_0 \right) =\lambda _{i,0},v_i\left( t_0 \right) =v_{i,0}, i=1,...,n.\\
	\end{cases}
\end{equation}

Defining  $L=L_n\otimes I_m$, where $L_n$ is the Laplacian matrix of graph $\mathcal{G}$ and let $x=\mathrm{col}\left( x_1,...,x_n\right) \in \mathbb{R}^{nm}$, $\dot{x}=\mathrm{col}\left( \dot{x}_1,...,\dot{x}_n\right) \in \mathbb{R}^{nm}$, $\lambda =\mathrm{col}\left( \lambda _1,...,\lambda _n\right) \in \mathbb{R}^{nm}$, $\dot{\lambda}=\mathrm{col}\left( \dot{\lambda}_1,...,\dot{\lambda}_n\right) \in \mathbb{R}^{nm}$, $\nabla f\left( x \right) =\mathrm{col}\left( \nabla f_1\left( x_1\right) ,...,\nabla f_n\left( x_n \right) \right) \in \mathbb{R}^{nm}$, $\nabla \psi ^*\left( u \right) =\mathrm{col}\left( \nabla \psi _{1}^{*}\left( u_1 \right) ,...,\nabla \psi _{n}^{*}\left( u_n \right) \right) \in \mathbb{R} ^{nm}$ and $\mathcal{X} =\prod_{i=1}^n{\mathcal{X} _i}$ is the Cartesian product of set $\mathcal{X} _i$, $i=1,...,n$. Then the compact formula of APDMD \eqref{ADPDMD1} is given by 

\begin{equation}\label{ADPDMD}
	\begin{cases}
		\dot{x}\left( t \right) =\frac{\alpha}{t}\left( \nabla \psi ^*\left( u\left( t \right) \right) -x\left( t \right) \right),
		\\
		\dot{u}\left( t \right) =-\frac{t}{\alpha}\left( \nabla f\left( x\left( t \right) \right) +\beta Lx\left( t \right) +Lv\left( t \right)\right),
		\\
		\dot{\lambda}\left( t \right) =\frac{\alpha}{t}\left( v\left( t \right) -\lambda \left( t \right) \right), \dot{v}\left( t \right)=\frac{t}{\alpha}L\nabla \psi ^*\left( u\left( t \right) \right),
		\\
		x\left( t_0 \right) =x_0, \nabla \psi ^*\left( u\left( t_0 \right) \right) =x_0,\lambda \left( t_0 \right) =\lambda _0, v\left( t_0 \right) =v_0.               \\
	\end{cases}
\end{equation}

Next, we will illustrate the accelerated convergence rate of the ADPDD \eqref{ADPDMD} by the Lyapunov analysis method.

\begin{theorem} Suppose Assumption 3.2 holds and let $\left( x\left( t \right) ,\lambda \left( t \right) \right) $ and $\left( x^*,\lambda ^* \right)$ be a solution trajectory and an optimal solution for ADPDMD \eqref{ADPDMD} and DCCP \eqref{P2}, respectively. Then for any $\left( x\left( t_0 \right) ,u\left( t_0 \right) ,\lambda \left( t_0 \right) ,\upsilon \left( t_0 \right) \right) \in \mathcal{X} \times \mathbb{R} ^{nm}\times \mathbb{R} ^{nm}\times \mathbb{R} ^{nm}$, the following statements are true:
	
	\textit{(I)}: The trajectories $x\left( t \right)$ and $\lambda \left( t \right)$ of ADPDMD \eqref{ADPDMD} are bounded for any $t\geq t_0>0$.
	
	(II): Let $\mathsf{L} _{\beta}\left( x\left( t \right)\lambda\left( t \right) \right) =f\left( x\left( t \right)\right) +\frac{\beta}{2}x\left( t \right)^TLx\left( t \right)+ \lambda \left( t \right) ^TLx\left( t \right)$, one has 
	\begin{subequations}
		\begin{align}
			&\mathsf{L} _{\beta}\left( x\left( t \right),\lambda ^* \right) -\mathsf{L} _{\beta}\left( x^*,\lambda \left( t \right)\right) \leq \frac{\alpha ^2\mathsf{V} \left( t_0 \right)}{t^2},\label{conver2,1}
			\\
			&x\left( t \right) ^TLx\left( t \right) \leq \frac{2\alpha ^2\mathsf{V} \left( t_0 \right)}{\beta t^2}, \label{conver2,2}
			\\
			&\int_{t_0}^{+\infty}{t\left( \mathsf{L} _{\beta}\left( x\left( t \right) ,\lambda ^* \right) -\mathsf{L} _{\beta}\left( x^*,\lambda \left( t \right) \right) \right) dt}<+\infty, \label{conver2,3}
			\\
			&\int_{t_0}^{+\infty}{t\left\| Ax\left( t \right) -b \right\| ^2}dt<+\infty.\label{conver2,4}
		\end{align}
	\end{subequations}
	(III): Let $\lambda ^*=\begin{cases}
		0, \,\,\,\,\,\,\,\,\,\,\,\,\,\,\,\,\,\,\,\,\,\,\,\,\,\,\,  if\,\,Lx\left( t \right)=0,\\
		\frac{x\left( t \right)}{\sqrt{x\left( t \right) ^TLx\left( t \right)}},if\,\,Lx\left( t \right)\ne 0,\\
	\end{cases}$. For any $t\geq t_0>0$, it follows that 
	\begin{subequations}
		\begin{align}
			&0\leq f\left( x\left( t \right) \right) -f\left( x^* \right) +\sqrt{x\left( t \right) ^TLx\left( t \right)}\leq \frac{\alpha ^2V\left( t_0 \right)}{t^2},\label{conve_2,5}
			\\
			&-\frac{\alpha \sqrt{2\mathsf{V} \left( t_0 \right)}}{t\sqrt{\beta}}\leq f\left( x\left( t \right) \right) -f\left( x^* \right) \leq \frac{\alpha ^2V\left( t_0 \right)}{t^2}.\label{conve_2,6}
		\end{align}
	\end{subequations}
\end{theorem}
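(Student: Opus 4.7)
The plan is to mirror the Lyapunov analysis used in the proof of Theorem~\ref{Theorem 2.4}, with the affine operator $A$ and the right-hand side $b$ replaced by the Laplacian $L$ and $0$, respectively, and to exploit the fact that $L$ is symmetric positive semidefinite with $Lx^{*}=0$. Concretely, I would introduce the Lyapunov function
\begin{equation*}
\mathsf{V}(t)=\mathsf{V}_{1}(t)+\mathsf{V}_{2}(t)+\mathsf{V}_{3}(t),
\end{equation*}
where
\begin{equation*}
\mathsf{V}_{1}(t)=\tfrac{t^{2}}{\alpha^{2}}\bigl(\mathsf{L}_{\beta}(x(t),\lambda^{*})-\mathsf{L}_{\beta}(x^{*},\lambda(t))\bigr),\quad \mathsf{V}_{2}(t)=D_{\psi^{*}}(u(t),u^{*}),\quad \mathsf{V}_{3}(t)=\tfrac{1}{2}\|v(t)-\lambda^{*}\|^{2},
\end{equation*}
with $u^{*}$ chosen so that $\nabla\psi^{*}(u^{*})=x^{*}$ and $\lambda^{*}$ a Lagrange multiplier for DCCP. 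Each piece is nonnegative and radially unbounded: $\mathsf{V}_{1}(t)\ge 0$ comes from the saddle-point inequality applied to the Lagrangian with Laplacian coupling (using $Lx^{*}=0$), $\mathsf{V}_{2}(t)\ge 0$ from the essential strict convexity of $\psi$, and $\mathsf{V}_{3}(t)\ge 0$ trivially.

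Second, I would differentiate $\mathsf{V}(t)$ along the compact ADPDMD \eqref{ADPDMD} exactly as in \eqref{V1}--\eqref{V3}, substituting $A\leftarrow L$, $b\leftarrow 0$ and using the symmetry $L^{T}=L$. Adding the three pieces, the cross terms involving $v(t)-\lambda^{*}$ cancel in the same fashion as in \eqref{Vend}, and the convexity of $f$ together with the positive semidefiniteness of $L$ (which gives convexity of the augmented Lagrangian $\mathsf{L}_{\beta}(\cdot,\lambda^{*})$) yields
\begin{equation*}
\dot{\mathsf{V}}(t)\le -\tfrac{t(\alpha-2)}{\alpha^{2}}\bigl(\mathsf{L}_{\beta}(x(t),\lambda^{*})-\mathsf{L}_{\beta}(x^{*},\lambda(t))\bigr)-\tfrac{\beta t}{2\alpha}\,x(t)^{T}Lx(t)\le 0,
\end{equation*}
for $\alpha\ge 2$. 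From $\mathsf{V}(t)\le \mathsf{V}(t_{0})$, assertion \textit{(I)} follows immediately because each summand is coercive in its argument, bounding $x(t)$, $u(t)$ and $\lambda(t)$ on $[t_{0},+\infty)$. The primal--dual gap bound \eqref{conver2,1} and the feasibility bound \eqref{conver2,2} follow from $\mathsf{V}_{1}(t)\le \mathsf{V}(t_{0})$ and from dividing \eqref{conver2,1} by $\beta/2$. Integrating the dissipation inequality from $t_{0}$ to $+\infty$ and using $\mathsf{V}(+\infty)\ge 0$ yields both integral estimates \eqref{conver2,3} and \eqref{conver2,4}.

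Finally, for \textit{(III)} I would follow the device used in Theorem~\ref{Theorem 2.4}\textit{(III)}: fix $t$ and choose $\lambda^{*}=x(t)/\sqrt{x(t)^{T}Lx(t)}$ when $Lx(t)\ne 0$ (and $\lambda^{*}=0$ otherwise), so that $\lambda^{*T}Lx(t)=\sqrt{x(t)^{T}Lx(t)}$; inserting into \eqref{conver2,1} with $Lx^{*}=0$ produces \eqref{conve_2,5}. The lower bound of \eqref{conve_2,6} comes from the saddle-point inequality $f(x(t))-f(x^{*})\ge -\lambda^{*T}Lx(t)\ge -\sqrt{x(t)^{T}Lx(t)}$ combined with \eqref{conver2,2}, while the upper bound is immediate from \eqref{conve_2,5}. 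The main technical obstacle I anticipate is the careful bookkeeping of the Laplacian coupling in the cross terms of $\dot{\mathsf{V}}$: one must use the symmetry of $L$ to identify $\beta L x(t) = \beta L(x(t)-x^{*})$ (since $Lx^{*}=0$) and to recognise $\tfrac{\beta}{2}x(t)^{T}Lx(t)$ as the analogue of $\tfrac{\beta}{2}\|Ax(t)-b\|^{2}$, so that the same cancellation pattern that produced the clean bound \eqref{Vend} in the centralised case reappears here.
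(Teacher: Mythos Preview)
Your proposal is correct and follows essentially the same approach as the paper: the paper defines the identical Lyapunov function $\mathsf{V}=\mathsf{V}_{1}+\mathsf{V}_{2}+\mathsf{V}_{3}$ and then simply states that ``following the similar steps as the proof of~\ref{Theorem 2.4}, the proof can be obtained easily,'' omitting the details. Your write-up in fact supplies more of the bookkeeping (the substitution $A\leftarrow L$, $b\leftarrow 0$, the use of $L^{T}=L$ and $Lx^{*}=0$) than the paper does.
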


\begin{proof}
	Consider the following candidate Lyapunov function:
	\begin{align}\label{energy_2}
		\mathsf{V} \left( t \right) =\mathsf{V} _1\left( t \right) +\mathsf{V} _2\left( t \right) +\mathsf{V} _3\left( t \right),
	\end{align}
	with 
	\begin{align*}	
		\begin{cases}
			\mathsf{V} _1\left( t \right) =\frac{t^2}{\alpha ^2}\left( \mathsf{L} _{\beta}\left( x\left( t \right) ,\lambda ^* \right) -\mathsf{L} _{\beta}\left( x^*,\lambda \left( t \right) \right) \right)\\
			\,\,\,\,\,\,\,\,\,\,\,\,\,\,\,\,=\frac{t^2}{\alpha ^2}\left( f\left( x\left( t \right) \right) -f\left( x^* \right) +\frac{\beta}{2}x\left( t \right) Lx\left( t \right) ^2+L\lambda ^* \right) ,\\
			\mathsf{V} _2\left( t \right) =D_{\psi ^*}\left( u\left( t \right) ,u^* \right) =\psi ^*\left( u\left( t \right) \right) -\psi ^*\left( u^* \right) -\nabla \psi ^*\left( u^* \right) ^T\left( u\left( t \right) -u^* \right) ,\\
			\mathsf{V} _3\left( t \right) =D_h\left( v\left( t \right) ,\lambda ^* \right) =\frac{1}{2}\left\| v\left( t \right) -\lambda ^* \right\| ^2,\\
		\end{cases}
	\end{align*}
Following the similar steps as the proof of \ref{Theorem 2.4}, the proof can be obtained easily. Due to space limitations, we omit the proof. 
\end{proof}

\subsection{APDMD for DEMO \eqref{P3} in smooth case}
In this subsection, to address the DEMO \eqref{P3}, where its objective function is smooth and convex, an accelerated distributed mirror dynamical (ADMD) approach is investigated based on APDMD \eqref{PDM}. Recalling the DEMO \eqref{P3}, the coupled equations $\sum_{i=1}^n{A_ix_i}=\sum_{i=1}^n{d_i}\in \mathbb{R} ^{m}$ can be equivalently decomposed as $\bar{A}x-d+Ly=0\in \mathbb{R} ^{nm}$ with $L=L_n\otimes I_m$ and an auxiliary variable $y=\mathrm{col}\left( y_{1},...,y_{n}\right)\in \mathbb{R} ^{nm}$ by using the properties of the Laplacian matrix of undirected $\mathcal{G}$ (i.e., $\mathrm{ker}\left(L_n \right) =\left\{ \varsigma 1|\varsigma \in \mathbb{R} \right\}$, $\mathrm{rang}\left( L_n \right) =\left\{ \omega \in \mathbb{R}^n|\omega ^T1=0 \right\} $). Therefore, the DEMO \eqref{P3} is equivalent to:
\begin{equation}\label{PP}
	\begin{split}
		&\underset{x\in \mathbb{R} ^{\sum_{i=1}^n{p_i}},y\in \mathbb{R} ^{nm}}{\min} \,\,  f\left( x \right) =\sum_{i=1}^n{f_i\left( x_i \right)},
		\\
		&\mathrm{s}.\mathrm{t}. \,\,\bar{A}x-d+Ly=0, x\in \mathcal{X} =\prod_{i=1}^n{\mathcal{X} _i},
	\end{split}
\end{equation}
where $\bar{A}=\mathrm{bl}\mathrm{diag}\left\{ A_{p_1},A_{p_2},...,A_{p_i}\right\} \in \mathbb{R}^{nm \times \sum_{i=1}^n{p_i}},\,\,d=\mathrm{col}\left( d_1,d_2,...,d_n \right) \in \mathbb{R}^{nm}$, and $\mathcal{X} =\prod_{i=1}^n{\mathcal{X} _i}$ is the Cartesian product of set $\mathcal{X} _i$, $i=1,...,n$.

To deal with the modified DEMO \eqref{PP} with smooth convex objective functions, we propose the following accelerated distributed mirror dynamical (ADMD) approach with $t\geq t_0>0$ and $\alpha \geq 2$ as follows:
\begin{align}\label{ADMD}
	\begin{cases}
		\dot{x}\left( t \right) =\frac{\alpha}{t}\left( \nabla \psi ^*\left( u\left( t \right) \right) -x\left( t \right) \right), \dot{u}\left( t \right) =-\frac{t}{\alpha}\left( \nabla f\left( x\left( t \right) \right) +\bar{A}^Tv\left( t \right) \right), \,\,         \\
		\dot{\lambda}\left( t \right) =\frac{\alpha}{t}\left( v\left( t \right) -\lambda \left( t \right) \right) , \dot{v}\left( t \right) =\frac{t}{\alpha}\left( \bar{A}\psi ^*\left( u\left( t \right) \right) -d-L\lambda \left( t \right) +Lz\left( t \right) \right),\\
		\dot{y}\left( t \right) =\frac{\alpha}{t}\left( z\left( t \right) -y\left( t \right) \right) ,\dot{z}\left( t \right) =-\frac{t}{\alpha}Lv\left( t \right).                            \\  
		x\left( t_0 \right) =x_0,u\left( t_0 \right) =u_0\,\,\mathrm{with}\,\,\nabla \psi ^*\left( u_0 \right) =x_0\in \mathcal{X},
		\\
		\lambda \left( t_0 \right) =\lambda _0,v\left( t_0 \right) =v_0, y\left( t_0 \right) =y_0,z\left( t_0 \right) =z_0, 
	\end{cases}
\end{align}

\begin{theorem}
	Suppose Assumption 3.2 holds and let $\left( x\left( t \right) ,\lambda \left( t \right) ,y\left( t \right) ,u\left( t \right) , \right. 
	\\
	\left. v\left( t \right) ,z\left( t \right) \right) $ be a solution trajectory and $\left( x^*,\lambda ^*,y^*,u^*,v^*,z^* \right) $ be an optimal solution of ADMD \eqref{ADMD}, respectively. Then, for any  $\left( x\left( t_0 \right) ,u\left( t_0 \right) ,\lambda \left( t_0 \right) ,\upsilon \left( t_0 \right) ,y\left( t_0 \right) ,z\left( t_0 \right) \right) \in \mathcal{X} \times \mathbb{R} ^{\sum_{i=1}^n{p_i}}\times \mathbb{R} ^{nm}\times \mathbb{R} ^{nm}\times \mathbb{R} ^{nm}\times \mathbb{R} ^{nm}$, we have the following two statements:	
	
	\textit{(I)}: The trajectories $x\left( t \right)$, $\lambda \left( t \right)$ and $y\left( t \right)$ of ADMD \eqref{ADMD} are bounded for any $t\geq t_0>0$.
	
	\textit{(II)}: Let $\mathbb{L} _{\beta}$ with $\beta=1$, i.e., $\mathbb{L} _{1}\left( x\left( t \right),\lambda\left( t \right) ,y\left( t \right) \right)$ be $f\left( x\left( t \right) \right) -\frac{1}{2}\lambda\left( t \right) ^TL\lambda\left( t \right) +\lambda\left( t \right) ^T$
	\\
	$\left( \bar{A}x\left( t \right)-d-Ly\left( t \right)\right) $, it follows that 
	\begin{subequations}
		\begin{align}
			&\mathbb{L} _{1}\left( x\left( t \right),y^*,\lambda ^* \right) -\mathbb{L}_{1}\left( x^*,y^*,\lambda\left( t \right) \right) \le \frac{\alpha ^2E\left( t_0 \right)}{t^2}, \,\,\lambda \left( t \right) ^TL\lambda \left( t \right) \le \frac{2\alpha ^2E\left( t_0 \right)}{t^2}, \label{conver3,1}
			\\
			&\int_{t_0}^{+\infty}{t\left( \mathbb{L} _1\left( x\left( t \right) ,y^*,\lambda ^* \right) -\mathbb{L} _1\left( x^*,y^*,\lambda \left( t \right) \right) \right) dt}<+\infty. \label{conver3,2}
		\end{align}
	\end{subequations}
\end{theorem}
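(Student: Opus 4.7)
The plan is to adapt the Lyapunov construction used in the proofs of Theorem~\ref{Theorem 2.4} and the ADPDMD theorem to the six--variable ADMD dynamics for the reformulated problem \eqref{PP}, adding one extra Bregman--type energy term to track the auxiliary splitting pair $(y,z)$. Fix a saddle point $(x^*,y^*,\lambda^*)$ of $\mathbb{L}_1$ for \eqref{PP} and $u^*$ with $\nabla\psi^*(u^*)=x^*$. As a candidate Lyapunov function I propose
\begin{equation*}
E(t) \;=\; \underbrace{\tfrac{t^2}{\alpha^2}\bigl[\mathbb{L}_1(x(t),y^*,\lambda^*)-\mathbb{L}_1(x^*,y^*,\lambda(t))\bigr]}_{E_1(t)} \;+\; \underbrace{D_{\psi^*}(u(t),u^*)}_{E_2(t)} \;+\; \underbrace{\tfrac12\|v(t)-\lambda^*\|^2}_{E_3(t)} \;+\; \underbrace{\tfrac12\|z(t)-y^*\|^2}_{E_4(t)}.
\end{equation*}
Positivity and radial unboundedness follow from the saddle--point inequality for $\mathbb{L}_1$ (so $E_1\ge 0$), convexity of $\psi^*$ together with cofiniteness (so $E_2\ge 0$ and grows with $\|u\|$), and the quadratic form of $E_3,E_4$.

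The main computation is $\dot E(t)$. I would differentiate $E_1$ by the product rule, use convexity of $\mathbb{L}_1$ in $x$ and concavity in $\lambda$, then substitute $\dot x=\tfrac{\alpha}{t}(\nabla\psi^*(u)-x)$ and $\dot\lambda=\tfrac{\alpha}{t}(v-\lambda)$; this reproduces the same structure as in the proof of Theorem~\ref{Theorem 2.4}, with the extra linear term $-L y^*$ inside the $\lambda$--slot of $\mathbb{L}_1$. Next,
\begin{equation*}
\dot E_2 \;=\; (\nabla\psi^*(u)-x^*)^T\dot u \;=\; -\tfrac{t}{\alpha}(\nabla\psi^*(u)-x^*)^T\bigl(\nabla f(x)+\bar A^T v\bigr),
\end{equation*}
in which I split $\bar A^T v=\bar A^T\lambda^*+\bar A^T(v-\lambda^*)$ and invoke the KKT stationarity $\nabla f(x^*)+\bar A^T\lambda^*=0$ together with primal feasibility $\bar A x^*-d+Ly^*=0$. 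Finally,
\begin{equation*}
\dot E_3 \;=\; \tfrac{t}{\alpha}(v-\lambda^*)^T\bigl(\bar A\nabla\psi^*(u)-d-L\lambda+Lz\bigr), \qquad \dot E_4 \;=\; -\tfrac{t}{\alpha}(z-y^*)^T L v.
\end{equation*}
By symmetry of $L$ and the feasibility identity above, the cross--term $(v-\lambda^*)^T L(z-y^*)$ produced by $\dot E_3$ cancels against its counterpart in $\dot E_4$ after adding and subtracting $Ly^*$ and $L\lambda^*$; similarly the $\bar A^T(v-\lambda^*)$ contribution in $\dot E_2$ pairs off against $\bar A\nabla\psi^*(u)$ in $\dot E_3$. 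What should remain, provided $\alpha\ge 2$, is an estimate of the form
\begin{equation*}
\dot E(t) \;\le\; -t\,\tfrac{\alpha-2}{\alpha^2}\bigl[\mathbb{L}_1(x(t),y^*,\lambda^*)-\mathbb{L}_1(x^*,y^*,\lambda(t))\bigr] \;-\; \tfrac{t}{2\alpha}\,\lambda(t)^T L\lambda(t).
\end{equation*}

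From this I would conclude: \textit{(I)} $E$ is nonnegative and nonincreasing, hence bounded by $E(t_0)$; boundedness of $x(t)$ is automatic from $x(t)\in\mathcal X$ (analogue of Lemma~\ref{feasibility1}), while boundedness of $\lambda$ and $y$ follows from boundedness of $v$ and $z$ through $E_3$, $E_4$ and integration of $\dot\lambda,\dot y$. \textit{(II)} From $E_1(t)\le E(t)\le E(t_0)$ I get the first rate in \eqref{conver3,1}; combining it with the concave--in--$\lambda$ structure of $\mathbb{L}_1$ evaluated at $\lambda=0$ versus $\lambda=\lambda(t)$ yields the bound on $\lambda(t)^T L\lambda(t)$. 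Integrating the differential inequality for $\dot E$ from $t_0$ to $+\infty$ together with $E(+\infty)\ge 0$ delivers \eqref{conver3,2}.

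The step I expect to be hardest is the cross--term bookkeeping in $\dot E$. Unlike the APDMD case, where $(x,\lambda)$ interact through a single affine constraint, here the auxiliary variable $y$ enters $\mathbb{L}_1$ as $-Ly$ while its conjugate $z$ drives $v$ through $+Lz$, and the concave--in--$\lambda$ penalty $-\tfrac12\lambda^T L\lambda$ contributes a nonstandard $-\lambda^T L(v-\lambda)$ piece when differentiated along $\dot\lambda$. Reconciling these with the $(v-\lambda^*)^T L\lambda$ terms coming from $\dot E_3$ requires a careful application of $L=L^T$ and the identity $\bar A x^* - d + L y^* = 0$, and this is the one place where a sign slip would break the final monotonicity estimate.
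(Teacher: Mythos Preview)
Your proposal is correct and follows essentially the same route as the paper: the paper uses exactly the four-term Lyapunov function $E=E_1+E_2+E_3+E_4$ with $E_1=\tfrac{t^2}{\alpha^2}[\mathbb{L}_1(x,y^*,\lambda^*)-\mathbb{L}_1(x^*,y^*,\lambda)]$, $E_2=D_{\psi^*}(u,u^*)$, $E_3=\tfrac12\|v-\lambda^*\|^2$, $E_4=\tfrac12\|z-y^*\|^2$, and obtains $\dot E\le -\tfrac{(\alpha-2)t}{\alpha^2}[\mathbb{L}_1(x,y^*,\lambda^*)-\mathbb{L}_1(x^*,y^*,\lambda)]$ via the same cross-term cancellations you outline. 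The only cosmetic difference is that the paper absorbs the residual $\lambda^T L\lambda$ term into the $\mathbb{L}_1$-gap rather than displaying it separately, and it reads off the bound on $\lambda(t)^T L\lambda(t)$ directly from the fact that $\tfrac12\lambda(t)^T L\lambda(t)$ is already a nonnegative summand inside $E_1$.
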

\begin{proof}
		\textit{(I)}: Designing a Lyapunov function as follows:
		\begin{align}
				E\left( t \right) =&E_1\left( t \right) +E_2\left( t \right) +E_3\left( t \right),
						\end{align}
	with
	\begin{align*}
	\begin{cases}
		E_1\left( t \right) =\frac{t^2}{\alpha ^2}\left( \mathbb{L} _1\left( x\left( t \right) ,y^*,\lambda ^* \right) -\mathbb{L} _1\left( x^*,y^*,\lambda \left( t \right) \right) \right) 
		\\
		\,\,=\frac{t^2}{\alpha ^2}\left( f\left( x\left( t \right) \right) -f\left( x^* \right) +\left( \lambda ^* \right) ^T\left( \bar{A}x\left( t \right) -d-Ly^* \right) +\frac{\beta}{2}\lambda \left( t \right) ^TL\lambda \left( t \right) \right) 
		\\
		E_2\left( t \right) =D_{\psi ^*}\left( u,u^* \right) =\psi ^*\left( u\left( t \right) \right) -\psi ^*\left( u^* \right) -\nabla \psi ^*\left( u^* \right) ^T\left( u\left( t \right) -u^* \right) ,
		\\
		E_3\left( t \right) =D_h\left( v\left( t \right) ,\lambda ^* \right) =\frac{1}{2}\left\| v\left( t \right) -\lambda ^* \right\| ^2,
		\\
		E_4\left( t \right) =D_h\left( z\left( t \right) ,y^* \right) =\frac{1}{2}\left\| z\left( t \right) -y^* \right\| ^2.
	\end{cases}	
	\end{align*}
	Using the same proof steps as \eqref{Theorem 2.4}, we can get
	\begin{equation}
	\begin{split}\label{energy_3}
			\dot{E}\left( t \right) =&\dot{E}_1\left( t \right) +\dot{E}_2\left( t \right) +\dot{E}_3\left( t \right) +\dot{E}_4\left( t \right) 
			\\
		 \leq& -\frac{\left( \alpha -2 \right) t}{\alpha ^2}\left( f\left( x\left( t \right) \right) -f\left( x^* \right) +\left( \lambda ^* \right) ^T\bar{A}\left( x\left( t \right) -x^* \right) +\frac{1}{2}\lambda \left( t \right) ^TL\lambda \left( t \right) \right)\nonumber 
			\\
			=&-\frac{\left( \alpha -2 \right) t}{\alpha ^2}\left( \mathbb{L} _{1}\left( x\left( t \right) ,y^*,\lambda ^* \right) -\mathbb{L} _{1}\left( x^*,y^*,\lambda \left( t \right) \right) \right) \leq0. 		
	\end{split}
	\end{equation}
	Recall that the Lyapunov function $E\left( t \right)$ is radically unbounded and positive with any $t\geq t_0>0$. This implies that the trajectories of $x\left( t \right)$, $\lambda \left( t \right)$ and $y\left( t \right)$ are bounded for any $t\geq t_0>0$. The proof of (I) is therefore completed.
		
	\textit{(II)}: Note that $\dot{E}\left( t \right)\leq0$, i.e., ${E}\left( t \right)$ is nonincreasing on $\left[ t_0,+\infty \right)$. Thus, for any $t\geq t_0>0$, one has $\frac{t^2}{\alpha ^2}\left( \mathbb{L} _1\left( x\left( t \right) ,y^*,\lambda ^* \right) -\mathbb{L} _1\left( x^*,y^*,\lambda \left( t \right) \right) \right) =E_1\left( t \right) \le E\left( t \right) \le E\left( t_0 \right) $, i.e., $\mathbb{L} _1\left( x\left( t \right) ,y^*,\lambda ^* \right) -\mathbb{L} _1\left( x^*,y^*,\lambda \left( t \right) \right) \le \frac{\alpha ^2E\left( t_0 \right)}{t^2}$ holds. Furthermore, we have $\lambda \left( t \right) ^TL\lambda \left( t \right) \le \frac{2\alpha ^2E\left( t_0 \right)}{t^2}$. Thus, the conclusion of \eqref{conver3,1} holds.
		
	Since $\dot{E}\left( t \right) \leq-\frac{\left( \alpha -2 \right) t}{\alpha ^2}\left( \mathbb{L} _1\left( x\left( t \right) ,y^*,\lambda ^* \right) -\mathbb{L} _1\left( x^*,y^*,\lambda \left( t \right) \right) \right) $, thus, integrating it from $t_0$ to $+\infty $ and rearranging it, we have
		\begin{align*}
				&\int_{t_0}^{+\infty}{t\left( \mathbb{L} _1\left( x\left( t \right) ,y^*,\lambda ^* \right) -\mathbb{L} _1\left( x^*,y^*,\lambda \left( t \right) \right) \right)}dt
				\\
				=&\frac{\alpha ^2}{\alpha -2}\left( E\left( t_0 \right) -E\left( +\infty \right) \right) \le \frac{\alpha ^2}{\alpha -2}E\left( t_0 \right) <+\infty.
			\end{align*}
	Therefore, the conclusion of \eqref{conver3,2} holds.
\end{proof}

\section{Optimization approaches for problem \eqref{P1} in the nonsmooth case}
For a vast amount of applications (e.g., signal processing, image processing, machine learning), nonsmooth functions are prevalent. To encompass these practical situations, we need to consider the case that the objective function $f(x)$ is nonsmooth. In order to adapt the APDMD \eqref{PDM} to this nonsmooth case, we will consider a corresponding smoothing accelerated primal-dual mirror  dynamical (SAPDMD) approaches based on smoothing approximation in \eqref{SA} as follows:
\begin{equation}\label{PDM_sm}
	\begin{cases}
		\dot{x}^{\mu}\left( t \right) =\frac{\alpha}{t}\left( \nabla \psi ^*\left( u^{\mu}\left( t \right) \right) -x^{\mu}\left( t \right) \right) ,
		\\
		\dot{u}^{\mu}\left( t \right) =-\frac{t}{\alpha}\left( \nabla _x\hat{f}\left( x^{\mu}\left( t \right) ,\mu \left( t \right) \right) +\beta A^T\left( Ax^{\mu}\left( t \right) -b \right) +A^Tv^{\mu}\left( t \right) \right) ,
		\\
		\dot{\lambda}^{\mu}\left( t \right) =\frac{\alpha}{t}\left( v^{\mu}\left( t \right) -\lambda ^{\mu}\left( t \right) \right) ,
		\\
		\dot{v}^{\mu}\left( t \right) =\frac{t}{\alpha}\left( A\nabla \psi ^*\left( u^{\mu}\left( t \right) \right) -b\left( t \right) \right) ,
		\\
		x^{\mu}\left( t_0 \right) =x_{0}^{\mu},u^{\mu}\left( t_0 \right) =u_{0}^{\mu}\,,\nabla \psi ^*\left( u_{0}^{\mu} \right) =x_{0}^{\mu}, 
		\\
		\lambda ^{\mu}\left( t_0 \right) =\lambda _{0}^{\mu},v^{\mu}\left( t_0 \right) =v_{0}^{\mu}, \mu \left( t_0 \right) =\mu _0,
	\end{cases} 
\end{equation}
where $t\geq t_0>0$, $\beta>0$, $\alpha \geq 2$ and $\mu \left( t \right) \leq \frac{\mu _0}{t^{2\mathrm{\alpha}}}$. The superscript $\mu$ means the trajectories that are obtained by the smoothing dynamical approaches, which is used to distinguish the trajectories obtained by the  dynamical approaches without smoothing approximation, i.e., APDMD \eqref{PDM}, ADPDMD \eqref{ADPDMD} and ADMD \eqref{ADMD}. The $\nabla_x \hat{f}\left( x^{\mu}\left( t \right) ,\mu \left( t \right) \right) $ is the gradient of smoothing function of $\hat{f}\left( x^{\mu}\left( t \right) ,\mu \left( t \right) \right) $ with respect of $x^{\mu}\left( t \right)$. The SAPDMD \eqref{PDM_sm} is illustrated in \eqref{fig:SAPDMD} (left).

Similar to APDMD \eqref{PDM}, the SAPDMD \eqref{PDM_sm} can be equivalent to the following smoothing second-order dynamical approach:
\begin{equation}
	\begin{cases}
		\ddot{x}^{\mu}\left( t \right) +\frac{\alpha +1}{t}\dot{x}^{\mu}\left( t \right) +\nabla ^2\psi ^*\left( \nabla \psi \left( x^{\mu}\left( t \right) +\frac{t}{\alpha}\dot{x}^{\mu}\left( t \right) \right) \right) \left( \nabla _x\hat{f}\left( x^{\mu}\left( t \right) ,\mu \left( t \right) \right) \right. 
		\\
		\,\,\,\,\,\,\,\,\,\,\,\,\,\,\,\,\,\,\,\,\,\,\,\,\,\,\,\,\,\,\,\,\,\,\,\,\,\,\,\,\,\,\,\,\,\,\,\,\left. +\beta A^T\left( Ax^{\mu}\left( t \right) -b \right) +A^T\left( \lambda ^{\mu}\left( t \right) +\frac{t}{\alpha}\dot{\lambda}^{\mu}\left( t \right) \right) \right) \,\,=0,
		\\
		\ddot{\lambda}^{\mu}\left( t \right) +\frac{\alpha +1}{t}\dot{\lambda}^{\mu}\left( t \right) -A\left( x^{\mu}\left( t \right) +\frac{\alpha}{t}\dot{x}^{\mu}\left( t \right) \right) +b=0,
		\\
		x^{\mu}\left( t_0 \right) =x_{0}^{\mu}\in \mathcal{X}, \dot{x}^{\mu}\left( t_0 \right) =\dot{x}_{0}^{\mu}, \lambda ^{\mu}\left( t_0 \right) =\lambda_{0}^{\mu}, \dot{\lambda}^{\mu}\left( t_0 \right) =\dot{\lambda}_{0}^{\mu}.                                  \\
	\end{cases}    
\end{equation}

\begin{figure}[htbp]
	\centering
	\includegraphics[width=15cm,height=7cm]{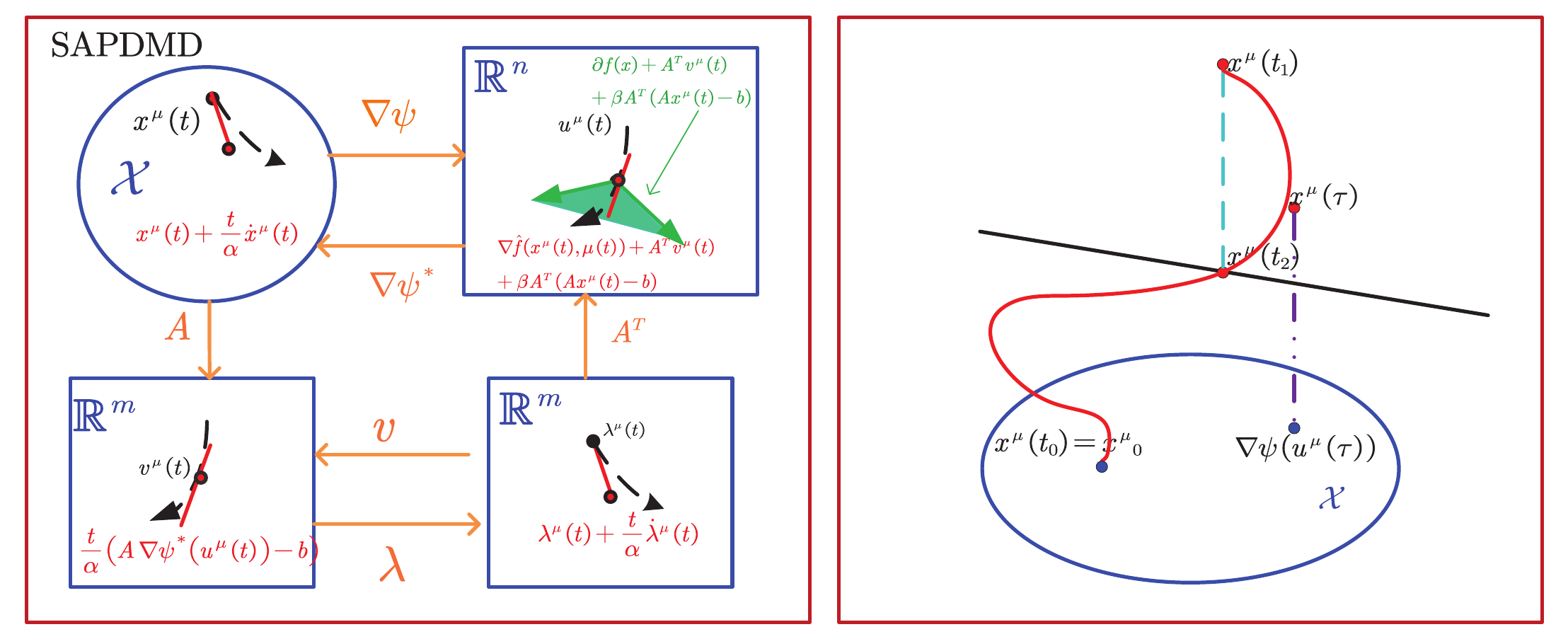}
	\caption{(left) The demonstration of SAPDMD \eqref{PDM_sm}. (right) The demonstration of the proof of feasibility of SAPDMD \eqref{PDM_sm}.}
	\label{fig:SAPDMD}
\end{figure}
\subsection{Nonsmooth dynamical approaches comparison} 
To solve nonsmooth optimization problems, many technologies have been studied and presented in designing dynamical optimized methods, such as, \cite{Cabot2007AsymptoticsFS, Apidopoulos2018TheDI, he2017second}, Moreau-Yosida regularization method \cite{Balavoine2013ConvergenceSO,Attouch2021FastCO} and directional derivative method \cite{Su2016ADE, Fazlyab2017AVA}.

Compared with the dynamical approaches based on techniques mentioned above for solving nonsmooth optimization problems, our proposed SAPDMD \eqref{PDM_sm} based on smoothing approximation in \eqref{SA} has the following differences and preponderances.
\begin{itemize}
	\item The solutions may be different. The SAPDMD \eqref{PDM_sm} has a strong global solution, which is similar to that in \eqref{stro_solu}. In \cite{Apidopoulos2018TheDI, Cabot2007AsymptoticsFS}, some inertial differential inclusion dynamical approaches are investigated, but they have shock solutions. The dynamical differential inclusion approaches proposed in \cite{he2017second, zeng2018distributed} have a Carath\'eodory's solution or Filippov's solution.
	
	\item The SAPDMD \eqref{PDM_sm} can achieve the optimal solution of problem \eqref{P1} in the nonsmooth case when $\mu \rightarrow 0$, which is different from the works in \cite{Nesterov2005SmoothMO,Zhu2016OptimalBR}. In the famous work \cite{Nesterov2005SmoothMO}, Nesterov's pioneered a smoothing method based on the Fenchel conjugate technique to deal with nonsmooth optimization problems and applied it to design accelerated algorithms with a $\varepsilon $-solution, i.e., $f\left( x^{\varepsilon} \right) -\min f\left( x \right) \leq \varepsilon$, where $\varepsilon $ is a small constant, but it's not equal to $0$.
	
	\item The proposed SAPDMD \eqref{PDM_sm} does not require solving some subproblems. However, the accelerated dynamic approaches in \cite{Balavoine2013ConvergenceSO,Attouch2021FastCO} need to use the Moreau-Yosida approximation $f_{\rho}\left( x \right) =\underset{x\in \mathcal{X}}{\min}\left\{ f\left( \xi \right) +\frac{1}{2\rho}\left\| x-\xi \right\| ^2 \right\}$, the accelerated dynamic aprroaches \cite{Su2016ADE, Fazlyab2017AVA} need to utilize $d\left( x;\dot{x} \right) =\underset{f\in \partial f\left( x \right)}{\arg \max}f^T\dot{x}$, for them in general, there are no closed formulas available. This is undesirable from the point of view of numerical calculation. 	
	
	\item The existence and uniqueness of the global solutions for SAPDMD \eqref{PDM_sm} can be easily guaranteed by the Cauchy-Lipschitz-Picard theorem. However, the uniqueness of the solutions in directional derivative method \cite{he2017second} and differential inclusion method \cite{Su2016ADE, Fazlyab2017AVA} may not be guaranteed.
\end{itemize}

\subsection{Feasibility, existence and uniqueness of strong global solution for SAPDMD}  
In this subsection, we illustrate the feasibility, existence and uniqueness of a strong global solution of $x^{\mu}\left(t\right)$ for the SAPDMD  \eqref{PDM_sm} by the same way as in \eqref{solution1}. 

\begin{lemma}\label{feaibility2}
	For any $\left( x^{\mu}\left( t_0 \right) ,u^{\mu}\left( t_0 \right) ,\lambda ^{\mu}\left( t_0 \right) ,\upsilon ^{\mu}\left( t_0 \right) \right)  \in \mathcal{X} \times \mathbb{R}^n\times \mathbb{R} ^m\times \mathbb{R} ^m$ with $\nabla \psi ^*\left( u_{t_0}^{\mu} \right) =x_{t_0}^{\mu}$, then the variable $x^{\mu}\left( t \right) $ of SAPDMD is always in $\mathcal{X}, \forall \,\,t\geq t_0>0$, i.e., the feasibility of $x\left( t \right)$ is satisfied.
	\begin{proof}	
		The proof follows the same arguments as in \eqref{feasibility1} by the reductio and separation hyperplane theorem, we omit it here due to the limitation of space.
	\end{proof}

\end{lemma}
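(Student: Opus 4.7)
The plan is to mirror the argument used for Lemma~\ref{feasibility1}, exploiting the structural similarity between SAPDMD and APDMD: the $x^{\mu}$-equation $\dot{x}^{\mu}(t)=\frac{\alpha}{t}(\nabla\psi^{*}(u^{\mu}(t))-x^{\mu}(t))$ has exactly the same form as its non-smoothed counterpart, and the smoothing only modifies the $u^{\mu}$-equation. Since the feasibility argument depends solely on the $x$-equation and on the fact that $\nabla\psi^{*}$ takes values in $\mathcal{X}$, it transfers essentially verbatim.

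I would proceed by contradiction. Suppose there exists $t_{1}>t_{0}$ with $x^{\mu}(t_{1})\notin\mathcal{X}$. Since $\mathcal{X}$ is closed and convex, the separation hyperplane theorem supplies $\omega,a\in\mathbb{R}^{n}$ with $(x^{\mu}(t_{1})-a)^{T}\omega>0$ and $(x-a)^{T}\omega<0$ for every $x\in\mathcal{X}$. Define $d(x^{\mu}(t)):=(x^{\mu}(t)-a)^{T}\omega$. Local absolute continuity of $x^{\mu}$ (from Definition~\ref{stro_solu}, which applies to SAPDMD by the same Cauchy--Lipschitz--Picard argument as in Theorem~\ref{The4.4}) together with the initial condition $\nabla\psi^{*}(u_{0}^{\mu})=x_{0}^{\mu}\in\mathcal{X}$ yields $d(x^{\mu}(t_{0}))<0<d(x^{\mu}(t_{1}))$. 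Setting $t_{2}:=\sup\{t\in[t_{0},t_{1}]\colon d(x^{\mu}(t))\le 0\}$, we have $d(x^{\mu}(t_{2}))=0$ and $d(x^{\mu}(t))>0$ on $(t_{2},t_{1}]$.

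The key step is to evaluate $\dot d$ along trajectories of SAPDMD. Using only the first line of \eqref{PDM_sm},
$$\dot d(x^{\mu}(t))=\omega^{T}\dot{x}^{\mu}(t)=\frac{\alpha}{t}\bigl(d(\nabla\psi^{*}(u^{\mu}(t)))-d(x^{\mu}(t))\bigr).$$
Under Assumption~3.1, the map $\nabla\psi^{*}$ sends $\mathcal{X}^{*}$ into $\mathcal{X}$ (as recalled in Section~\ref{mirr}), so $d(\nabla\psi^{*}(u^{\mu}(\tau)))<0$ for every $\tau$; combined with $d(x^{\mu}(\tau))>0$ on $(t_{2},t_{1}]$, this yields $\dot d(x^{\mu}(\tau))<0$ throughout $(t_{2},t_{1}]$. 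By the mean value theorem there exists $\tau\in(t_{2},t_{1})$ with $d(x^{\mu}(t_{1}))-d(x^{\mu}(t_{2}))=\dot d(x^{\mu}(\tau))(t_{1}-t_{2})<0$, contradicting $d(x^{\mu}(t_{1}))>0=d(x^{\mu}(t_{2}))$. The right panel of Figure~\ref{fig:SAPDMD} illustrates this geometric picture.

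The only point requiring care, and the potential obstacle, is to confirm that the smoothing perturbation does not spoil the invariance $\nabla\psi^{*}(u^{\mu}(t))\in\mathcal{X}$. This is immediate: the range of $\nabla\psi^{*}$ is a property of $\psi$ alone (essential strict convexity and cofiniteness), independent of how $u^{\mu}$ evolves. Hence the argument depends on the $x^{\mu}$-equation only through its geometric form, and the proof closes without any modification due to the smoothing parameter $\mu(t)$ or the smoothed gradient $\nabla_{x}\hat f(x^{\mu},\mu)$.
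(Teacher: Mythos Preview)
Your proposal is correct and follows essentially the same approach as the paper: the paper's proof of this lemma simply points back to Lemma~\ref{feasibility1} (contradiction plus separating hyperplane, then differentiate the linear functional along the $x^{\mu}$-equation), which is exactly what you reproduce. Your write-up is in fact slightly cleaner than the original---you include the $(t_{1}-t_{2})$ factor from the mean value theorem and make explicit that the invariance $\nabla\psi^{*}(u^{\mu}(\tau))\in\mathcal{X}$ depends only on $\psi$, not on the smoothing---but the underlying argument is identical.
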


Now let us turn to the existence of strong global solution for \eqref{P1} with nonsmooth objective functions and we will once again take into account solutions (similar to \eqref{stro_solu}) to this problem. 
\begin{theorem}\label{ex_uni2}	
	For any initial values $\left( x^{\mu}\left( t_0 \right) ,u^{\mu}\left( t_0 \right), \lambda^{\mu} \left( t_0 \right) , \upsilon^{\mu} \left( t_0 \right)\right) \in \mathcal{X} \times \mathbb{R}^n\times \mathbb{R}^m\times \mathbb{R}^m$, there exists a unique strong global solution of SAPDMD \eqref{PDM_sm}.
\end{theorem}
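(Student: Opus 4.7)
The plan is to mirror the argument used for Theorem~4.4 in the smooth case: rewrite SAPDMD as a first-order nonautonomous system $\dot{Y}(t)=F(t,Y(t))$ with $Y(t)=(x^{\mu}(t),u^{\mu}(t),\lambda^{\mu}(t),v^{\mu}(t))$ on the product space $\mathcal{X}\times\mathbb{R}^{n}\times\mathbb{R}^{m}\times\mathbb{R}^{m}$, and then invoke the Cauchy--Lipschitz--Picard theorem by verifying the two standing conditions:
\textbf{(I)} for every fixed $t\in[t_{0},+\infty)$, the map $F(t,\cdot)$ is $\mathfrak{l}(t)$-Lipschitz with $\mathfrak{l}(\cdot)\in \mathrm{L}_{\mathrm{loc}}^{1}([t_{0},+\infty))$; and \textbf{(II)} for every fixed $Y$, the map $F(\cdot,Y)$ lies in $\mathrm{L}_{\mathrm{loc}}^{1}([t_{0},+\infty))$.

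For condition \textbf{(I)}, the only component of $F$ that differs from the APDMD case is the drift in $\dot{u}^{\mu}$, where $\nabla f(x)$ is replaced by $\nabla_{x}\hat{f}(x,\mu(t))$. By Definition~\ref{Defi_sm}(v), $\nabla_{x}\hat{f}(\cdot,\mu)$ is Lipschitz on $\mathcal{X}$ with constant $\ell/\mu$ for any fixed $\mu\in(0,\bar\mu]$. Using the prescribed schedule $\mu(t)\le \mu_{0}/t^{2\alpha}$, and more importantly the fact that on any compact interval $[t_{0},\mathcal{T}]\subset(0,+\infty)$ the continuity of $\mu(\cdot)$ guarantees $\mu(t)\ge \mu_{\min}:=\min_{t\in[t_{0},\mathcal{T}]}\mu(t)>0$, the spatial Lipschitz constant of $\nabla_{x}\hat{f}(\cdot,\mu(t))$ is bounded by $\ell/\mu_{\min}$ on this interval. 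Repeating the estimation scheme from Theorem~\ref{The4.4} (use $\|\mathsf{X}+\mathsf{Y}\|^{2}\le 2\|\mathsf{X}\|^{2}+2\|\mathsf{Y}\|^{2}$ and the Lipschitz continuity of $\nabla\psi^{*}$) yields
\begin{equation*}
\|F(t,Y)-F(t,\hat{Y})\|\le \mathfrak{l}(t)\|Y-\hat{Y}\|,
\end{equation*}
with
\begin{equation*}
\mathfrak{l}(t)=\Bigl(\tfrac{t^{2}}{\alpha^{2}}\bigl(4\beta^{2}\delta_{\max}(A^{T}A)^{2}+4(\ell/\mu(t))^{2}+\delta_{\max}(A^{T}A)\mathfrak{l}_{\psi^{*}}^{2}+2\delta_{\max}(A^{T}A)\bigr)+\tfrac{\alpha^{2}(6+2\mathfrak{l}_{\psi^{*}}^{2})}{t^{2}}\Bigr)^{1/2}.
\end{equation*}
Since $\mu(\cdot)$ is continuous and strictly positive on $[t_{0},+\infty)$, the function $\mathfrak{l}(\cdot)$ is continuous on $[t_{0},+\infty)$ and hence integrable on every compact subinterval $[t_{0},\mathcal{T}]$, delivering \textbf{(I)}.

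For condition \textbf{(II)}, we reproduce the norm bound from Theorem~\ref{The4.4} for fixed $Y\in\mathcal{X}\times\mathbb{R}^{n}\times\mathbb{R}^{m}\times\mathbb{R}^{m}$, with the only change being that $\|\nabla f(x)\|$ is replaced by $\|\nabla_{x}\hat{f}(x,\mu(t))\|$. This remains continuous in $t$ (and hence finite on every compact interval) by the joint continuity of $\hat{f}$ and the continuity of $\mu(\cdot)$; the remaining coefficients are polynomial in $t$ and $1/t$, so the integrand is continuous on $[t_{0},\mathcal{T}]$ and therefore Lebesgue integrable.

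The main obstacle, and the only place where a genuinely new estimate enters beyond a verbatim copy of Theorem~\ref{The4.4}, is controlling the spatial Lipschitz constant $\ell/\mu(t)$ of $\nabla_{x}\hat{f}$ as $\mu(t)\to 0^{+}$: on an unbounded time interval this blows up, which would preclude a global Lipschitz constant, but local integrability is all the Cauchy--Lipschitz--Picard theorem requires, and the strict positivity of the continuous parameter $\mu(\cdot)$ on every compact interval gives that for free. Once \textbf{(I)} and \textbf{(II)} are established, the Cauchy--Lipschitz--Picard theorem supplies a unique strong global solution $Y(\cdot)$ of $\dot{Y}=F(t,Y)$; combined with Lemma~\ref{feaibility2}, which keeps $x^{\mu}(t)\in\mathcal{X}$ for all $t\ge t_{0}$, this yields the unique strong global solution of SAPDMD \eqref{PDM_sm} as required.
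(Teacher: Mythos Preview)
Your proposal is correct and follows essentially the same approach as the paper: rewrite SAPDMD as a first-order system, then verify the two Cauchy--Lipschitz--Picard conditions, with the only change from the smooth case being the replacement of the Lipschitz constant $\mathfrak{l}_f$ by $\ell/\mu(t)$ coming from Definition~\ref{Defi_sm}(v). Your explicit remark that $\mu(\cdot)$ is continuous and strictly positive on each compact interval (so that $\mathfrak{l}(\cdot)$ stays locally integrable despite blowing up as $t\to\infty$) is a welcome clarification that the paper leaves implicit.
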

	\begin{proof}
			Let $Y^{\mu}\left( t \right) =\left( x^{\mu}\left( t \right) ,u^{\mu}\left( t \right) ,\lambda^{\mu} \left( t \right) ,v^{\mu}\left( t \right) \right) $, then the  SAPDMD \eqref{PDM_sm} is equivalent to
			\begin{equation}
					\begin{cases}
							\dot{Y}^{\mu}\left( t \right) =H\left( t,Y^{\mu}\left( t \right) \right) \,\,                    \\
							Y^{\mu}\left(t_0 \right) =\left( x^{\mu}\left( t_0 \right) ,u^{\mu}\left( t_0 \right) , \lambda^{\mu} \left( t_0 \right), \upsilon^{\mu} \left( t_0 \right) \right),
						\end{cases}
				\end{equation}
			where 
	\begin{equation}\label{system2}
	\begin{split}		
	H\left( t,Y^{\mu} \right) 
		=&\left( \frac{\alpha}{t}\left( \nabla \psi ^*\left( u^{\mu}\left( t \right) \right) -x^{\mu}\left( t \right) \right) , \right. -\frac{t}{\alpha}\left( \nabla _x\hat{f}\left( x^{\mu}\left( t \right) ,\mu \left( t \right) \right) \right. 
	\\
	&\left. -A^Tv^{\mu}\left( t \right) \right) ,\left. \frac{\alpha}{t}\left( v^{\mu}\left( t \right) -\lambda ^{\mu}\left( t \right) \right) ,\frac{t}{\alpha}\left( A\nabla \psi ^*\left( u^{\mu}\left( t \right) \right) -b \right) \right).
	\end{split}
	\end{equation}
	To prove the existence and uniqueness of the strong global solution  $Y^{\mu}\left( t \right) $ generated by SAPDMD \eqref{PDM_sm} by the Cauchy-Lipschitz-Picard theorem \cite{Bolte2003SurDS}, the following conditions need to be satisfied:
			
			(I): For every $t\in \left[ t_0,+\infty \right) $, the mapping $H\left( t,\cdot \right) $ is $\mathsf{l} \left( t \right) $-Lipschitz continuous and $\mathsf{l} \left( \cdot \right) \in \mathrm{L}_{loc}^{1}\left( \left[ t_0,+\infty \right) \right) $.
			
			(II) For any $Y^{\mu}\in \mathcal{X} \times \mathbb{R}^n\times \mathbb{R}^m\times \mathbb{R}^m$, we have $H\left( \cdot ,Y^{\mu}\right) \in \mathrm{L}_{loc}^{1}\left( \left[t_0,+\infty \right),\mathcal{X} \times \mathbb{R}^n,\right. 
			\\
			\left. \times \mathbb{R}^m\times \mathbb{R}^m \right)$.
			
			The proof of (I). Let $t\in \left[ t_0,+\infty \right)$ be fixed and use the Lipschitz continuous of $\nabla \psi ^*$, $\nabla _xf\left( x,\mu \right) $. Then, for any $Y^{\mu}$, $\hat{Y}^{\mu}$, we have 
			\begin{equation*}
				\begin{split}
			&\,\,\,\,\,\,\left\| F\left( t,Y^{\mu}\left( t \right) \right) -F\left( t,\hat{Y}^{\mu}\left( t \right) \right) \right\| 
			\leq \left( \frac{t^2}{\alpha ^2} \right. \left( 4\beta ^2\delta _{\max}\left( A^TA \right) ^2+\frac{4\ell ^2}{\mu ^2\left( t \right)} \right. 
			\\
			&\left.+\delta _{\max}\left( A^TA \right) \mathfrak{l} _{\psi ^*}^{2} +2\delta _{\max}\left( A^TA \right) \right) \,\,\,\,\left. +\frac{\alpha ^2\left( 6+2\mathfrak{l} _{\psi ^*}^{2} \right)}{t^2} \right) ^{\frac{1}{2}}\left\| Y^{\mu}\left( t \right) -\hat{Y}^{\mu}\left( t \right) \right\|.
				\end{split}
		    \end{equation*}
	By using the notation $\mathsf{l} \left( t \right) =\left( \frac{t^2\left( 4\beta ^2\delta _{\max}\left( A^TA \right) ^2+\frac{4\ell ^2}{\mu ^2\left( t \right)}+\delta _{\max}\left( A^TA \right) \mathfrak{l} _{\psi ^*}^{2}+2\delta _{\max}\left( A^TA \right) \right)}{\alpha ^2} \right.
	\\
	\left. +\frac{\alpha ^2\left( 6+2\mathfrak{l} _{\psi ^*}^{2} \right)}{t^2} \right) ^{\frac{1}{2}}$, one has
	$\left\| H\left( t,Y^{\mu}\left( t \right) \right) -H\left( t,\hat{Y}^{u}\left( t \right) \right) \right\| \leq \mathsf{l} \left( t \right) \left\| Y^{\mu}\left( t \right)-\hat{Y}^{\mu}\left( t \right) \right\|.$ 
			
	Note that $\mathsf{l} \left( t \right) $ is continuous on $\left[ t_0,+\infty \right) $. Hence $\mathsf{l} \left( \cdot \right) $ is integrable on $\left[ t_0,\mathcal{T} \right] $ for all $t_0<\mathcal{T} <+\infty$.
			
	The proof of (II). Let $Y\in \mathcal{X} \times \mathbb{R}^n\times \mathbb{R}^m\times \mathbb{R}^m$ and $t_0<\mathcal{T}<+\infty$, it holds that  
	\begin{equation*}
	\begin{split}	
	&\int_{t_0}^{\mathcal{T}}{\left\| H\left( t,Y^{\mu}\left( t \right) \right) \right\|}dt
	\\
	\leq& \sqrt{\left\| \nabla \psi ^*\left( u\left( t \right) \right) \right\| ^2+\left\| x\left( t \right) \right\| ^2+\left\| \lambda \left( t \right) \right\| ^2+\left\| v\left( t \right) \right\| ^2+\left\| x^* \right\| ^2+\left\| \nabla _x\hat{f}\left( x^{\mu}\left( t \right) ,\mu \left( t \right) \right) \right\| ^2}
	\\
	&\times \left( \frac{8\alpha ^2}{t^2}+\frac{2t^2}{\alpha ^2}\left( 4\delta _{\max}\left( A^TA \right) +4\beta ^2\delta _{\max}\left( A^TA \right) ^2+2 \right) \right),
		\end{split}
    \end{equation*}
and the conclusion holds by employing the continuity of the function 
\begin{equation*}
\begin{split}	
t\rightarrow \left( \frac{8\alpha ^2}{t^2}+\frac{2t^2}{\alpha ^2}\left( 4\delta _{\max}\left( A^TA \right) +4\beta ^2\delta _{\max}\left( A^TA \right) ^2+2 \right) \right) ^{\frac{1}{2}}  \text{on}\,\,\left[ t_0,\mathcal{T} \right].
\end{split}
\end{equation*}

The existence and uniqueness of $Y^{\mu}\left( t \right)$ to the dynamical system \eqref{system2} can be guaranteed by  the Cauchy-Lipschitz-Picard theorem, consequently, the existence and uniqueness of the trajectories of SAPDMD \eqref{PDM_sm} also hold.	
\end{proof}

\subsection{The accelerated convergence of the SAPDMD} In this subsection, we will illustrate the accelerated convergence properties of the SAPDMD \eqref{PDM_sm} based on the Lyapunov analysis method.

A natural question is whether the Lyapunov analysis method in the smooth case is still effective in the nonsmooth case. The answer is affirmative, provided some care is taken in the main three steps of our analysis. First, a time-dependent parameter $\mu \left( t \right)$ needs to be introduced in the Lyapunov function. Second, when taking the time derivative of Lyapunov function, it requires utilizing the full differentiation of the smoothing approximation function, i.e., $\nabla _{\mu}\hat{f}\left( x\left( t \right) ,\mu \left( t \right) \right)$ needs to be considered. The third factor is  boundedness of gradient with respect to $\mu\left( t \right) $. In turn, all the results and estimation we have presented in the previous sections can be transferred to this more general nonsmooth context.

\begin{theorem}\label{Theorem_sapdmd}
	Suppose that Assumption 3.1 holds and the objective function is nonsmooth convex. Let $\left(x\left( t \right), y\left( t \right)\right)$ and $\left(x^*, y^*\right)$ be a solution trajectory and an optimal solution of SAPDMD \eqref{PDM_sm}, respectively. Defining a Lyapunov function as   $\hat{\mathcal{L}}_{\beta}\left( x^{\mu}\left( t \right) ,\lambda ^{\mu}\left( t \right) \right) =\hat{f}\left( x^{\mu}\left( t \right) ,\mu \left( t \right) \right) +\lambda ^{\mu}\left( t \right) ^T\left( Ax^{\mu}\left( t \right) -b \right) +\frac{\beta}{2}\left\| Ax^{\mu}\left( t \right) -b \right\| ^2$ with $\beta \geq 0$, then, for any $\left( x^{\mu}\left( t \right) ,\lambda ^{\mu}\left( t \right) \right) \in \mathcal{X} \times \mathbb{R} ^m$ , we have 
	
	(I): The trajectories of $x^{\mu}\left( t \right)$ and $\lambda^{\mu} \left( t \right)$ are bounded for any $t\geq t_0>0$.
	
	(II): Then, we have 
	\begin{subequations}
		\begin{align}
			&0\le \hat{\mathcal{L}}_{\beta}\left( x^{\mu}\left( t \right) ,\lambda ^* \right) -\hat{\mathcal{L}}_{\beta}\left( x^*,\lambda ^{\mu}\left( t \right) \right) +4\kappa _{\hat{f}}\mu \left( t \right) \le \frac{\alpha ^2\mathrm{V}\left( t_0 \right)}{t^2}, \label{sc1}
			\\
			&\left\| Ax^{\mu}\left( t \right) -b \right\| ^2\le \frac{2\alpha ^2\mathrm{V}\left( t_0 \right)}{\beta t^2}, \label{sc2}
			\\
			&\int_{t_0}^{+\infty}{\left\| Ax^{\mu}\left( t \right) -b \right\| ^2}\le +\infty, \label{sc3}
			\\
			&\int_0^{+\infty}{t\left( \hat{\mathcal{L}}_{\beta}\left( x^{\mu}\left( t \right) ,\lambda ^* \right) -\hat{\mathcal{L}}_{\beta}\left( x^*,\lambda ^{\mu}\left( t \right) \right) +2\kappa _{\hat{f}}\mu \left( t \right) \right) dt}<+\infty, \label{sc4}
		\end{align}
	\end{subequations}
	
	(III): Let $\lambda ^*=\begin{cases}
		0, \,\,\,\,\,\,\,\,\,\,\,\,\,\,\,\,\,\,\,\,\,  if\,\,Ax^{\mu}\left( t \right) -b=0,\\
		\frac{Ax^{\mu}\left( t \right) -b}{\left\| Ax^{\mu}\left( t \right) -b \right\|},if\,\,Ax^{\mu}\left( t \right) -b\ne 0,\\
	\end{cases}$ and for any $t\geq t_0>0$, it follows that 
	\begin{subequations}
		\begin{align}
			&0\leq f\left( x^{\mu}\left( t \right) \right) -f\left( x^* \right) +\left\| Ax^{\mu}\left( t \right) -b \right\| \leq \frac{\alpha ^2V\left( t_0 \right)}{t^2};\label{sc5}
			\\
			&-\frac{\alpha \sqrt{2\mathrm{V}\left( t_0 \right)}}{t\sqrt{\beta}}\le f\left( x^{\mu}\left( t \right) \right) -f\left( x^* \right) \le \frac{\alpha ^2\mathrm{V}\left( t_0 \right)}{t^2}. \label{sc6}
		\end{align}
	\end{subequations}
\end{theorem}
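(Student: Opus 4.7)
The plan is to adapt the Lyapunov analysis of Theorem \ref{Theorem 2.4} to the smoothing setting, with three modifications dictated by the remarks preceding the statement: the Lagrangian is replaced by its smoothed counterpart, an extra additive term in $\mu(t)$ is inserted to absorb the new cross-term, and the decay rate $\mu(t)\le \mu_0/t^{2\alpha}$ is used to keep the energy nonincreasing. Concretely, I would introduce
\begin{equation*}
\mathrm{V}(t)=\mathrm{V}_1(t)+\mathrm{V}_2(t)+\mathrm{V}_3(t),
\end{equation*}
with $\mathrm{V}_1(t)=\frac{t^2}{\alpha^2}\bigl(\hat{\mathcal{L}}_{\beta}(x^{\mu}(t),\lambda^*)-\hat{\mathcal{L}}_{\beta}(x^*,\lambda^{\mu}(t))+C\kappa_{\hat f}\mu(t)\bigr)$ for a suitable constant $C$ (the statement suggests $C=4$), $\mathrm{V}_2(t)=D_{\psi^*}(u^{\mu}(t),u^*)$ and $\mathrm{V}_3(t)=\tfrac{1}{2}\|v^{\mu}(t)-\lambda^*\|^2$. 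Nonnegativity of $\mathrm{V}(t)$ uses the saddle-point inequality for $\hat{\mathcal{L}}_\beta$ (valid because $\hat{f}(\cdot,\mu)$ is convex), together with convexity of $\psi^*$ and the quadratic form of $\mathrm{V}_3$.

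Next I would differentiate $\mathrm{V}(t)$ along the trajectories of SAPDMD \eqref{PDM_sm}. The computation mirrors \eqref{V1}-\eqref{Vend}, but with two new contributions. First, $\frac{d}{dt}\hat{f}(x^{\mu}(t),\mu(t))$ generates an extra $\nabla_\mu \hat{f}(x^{\mu}(t),\mu(t))\dot\mu(t)$, bounded by $\kappa_{\hat f}|\dot\mu(t)|$ thanks to Definition \ref{Defi_sm}(iv). Second, the term $\frac{t^2}{\alpha^2}C\kappa_{\hat f}\mu(t)$ contributes $\frac{2t}{\alpha^2}C\kappa_{\hat f}\mu(t)+\frac{t^2}{\alpha^2}C\kappa_{\hat f}\dot\mu(t)$. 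Collecting everything as in \eqref{Vend} and using $\nabla_x\hat f(x^\mu,\mu)$ in place of $\nabla f(x)$, I expect the analogue
\begin{equation*}
\dot{\mathrm{V}}(t)\le -\frac{(\alpha-2)t}{\alpha^2}\bigl(\hat{\mathcal{L}}_{\beta}(x^{\mu}(t),\lambda^*)-\hat{\mathcal{L}}_{\beta}(x^*,\lambda^{\mu}(t))\bigr)-\frac{\beta t}{2\alpha}\|Ax^{\mu}(t)-b\|^2+R(t),
\end{equation*}
where $R(t)$ collects the $\mu$-related residuals and is nonpositive as soon as $\mu(t)\le \mu_0/t^{2\alpha}$ (the $t^{2}\mu(t)$ and $t^{2}\dot\mu(t)$ factors force $R(t)\le 0$ for $\alpha\ge 2$). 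This yields $\dot{\mathrm{V}}(t)\le 0$ and thus (I): $\mathrm{V}(t)\le \mathrm{V}(t_0)$, which, being radially unbounded in $x^\mu$ and $\lambda^\mu$ through $\mathrm{V}_1,\mathrm{V}_2,\mathrm{V}_3$, forces boundedness.

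From $\mathrm{V}_1(t)\le \mathrm{V}(t)\le \mathrm{V}(t_0)$ I immediately obtain \eqref{sc1}, and \eqref{sc2} follows by choosing $\lambda^*$ to kill the cross term (as in the APDMD proof). Integrating the refined inequality on $\dot{\mathrm{V}}$ over $[t_0,\infty)$ gives \eqref{sc3} and \eqref{sc4}. For part (III), the approximation bound $|\hat f(x,\mu)-f(x)|\le \kappa_{\hat f}\mu$ from Definition \ref{Defi_sm}(iv) lets me trade $\hat f$ for $f$ with an $O(\mu(t))$ error that is dominated by $1/t^2$ under our choice of $\mu(t)$. Plugging the two canonical choices of $\lambda^*$ (either zero or the unit vector aligned with $Ax^\mu(t)-b$) into \eqref{sc1}, exactly as in the derivation of \eqref{4.8}-\eqref{4.9}, will yield \eqref{sc5}-\eqref{sc6}.

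The main obstacle will be the bookkeeping in step two: verifying that the $\nabla_\mu\hat f\,\dot\mu$ term together with $\frac{2t}{\alpha^2}C\kappa_{\hat f}\mu(t)$ and $\frac{t^2}{\alpha^2}C\kappa_{\hat f}\dot\mu(t)$ cancel (or become nonpositive) under the prescribed decay $\mu(t)\le \mu_0/t^{2\alpha}$, and fixing the smallest constant $C$ (apparently $C=4$, as suggested by \eqref{sc1}) that makes this cancellation exact. Everything downstream is an essentially mechanical repetition of the arguments already used for APDMD, with the $\kappa_{\hat f}\mu(t)$ term playing the role of a vanishing perturbation.
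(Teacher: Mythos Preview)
Your approach is essentially the same as the paper's: the Lyapunov function, the differentiation, the handling of the $\mu$-residual via $\mu(t)\le \mu_0/t^{2\alpha}$, and the passage to parts (II) and (III) all match.

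One point deserves correction. You justify nonnegativity of $\mathrm{V}_1(t)$ by invoking ``the saddle-point inequality for $\hat{\mathcal L}_\beta$ (valid because $\hat f(\cdot,\mu)$ is convex)''. This is not right: $(x^*,\lambda^*)$ is a saddle point of $\mathcal L_\beta$, not of $\hat{\mathcal L}_\beta$, and convexity of $\hat f(\cdot,\mu)$ alone does not make $\hat{\mathcal L}_\beta(x^\mu,\lambda^*)-\hat{\mathcal L}_\beta(x^*,\lambda^\mu)\ge 0$. The paper instead uses the approximation bound from Definition~\ref{Defi_sm}(iv), namely $|\hat f(x,\mu)-f(x)|\le \kappa_{\hat f}\mu$, applied at both $x^\mu$ and $x^*$, to obtain
\[
\hat{\mathcal L}_\beta(x^\mu,\lambda^*)-\hat{\mathcal L}_\beta(x^*,\lambda^\mu)+4\kappa_{\hat f}\mu(t)\ \ge\ \mathcal L_\beta(x^\mu,\lambda^*)-\mathcal L_\beta(x^*,\lambda^\mu)+2\kappa_{\hat f}\mu(t)\ \ge\ 0,
\]
and \emph{this} is why the constant $C=4$ (two units to absorb the approximation error, leaving a nonnegative remainder) is the right choice. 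You already have the tool in hand for part (III); it is also what makes $\mathrm{V}_1\ge 0$.
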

\begin{proof}
	\textit{(I)}: Consider a well-designed smooth Lyapunov function 
	\begin{equation}\label{SV2}
		\mathrm{V}\left( t \right) =\mathrm{V}_1\left( t \right) +\mathrm{V}_2\left( t \right) +\mathrm{V}_3\left( t \right),
		\end{equation}
with 
\begin{align*}
\begin{cases}
\mathrm{V}_1\left( t \right) =\frac{t^2}{\alpha ^2}\left( \hat{\mathcal{L}}_{\beta}\left( x^{\mu}\left( t \right) ,\lambda ^* \right) -\hat{\mathcal{L}}_{\beta}\left( x^*,\lambda ^{\mu}\left( t \right) \right) +4\kappa _{\hat{f}}\mu \left( t \right) \right) 
\\
\,\,\,\,\,\,\,\,\,\,\,\,\,\,\,\,\,=\frac{t^2}{\alpha ^2}\left( \hat{f}\left( x^{\mu}\left( t \right) ,\mu \left( t \right) \right) -f\left( x^*,\mu \left( t \right) \right) \right. +4\kappa _{\hat{f}}\mu \left( t \right) 
\\
\,\,\,\,\,\,\,\,\,\,\,\,\,\,\,\,\,\,\,\,\left. +\frac{\beta}{2}\left\| Ax^{\mu}\left( t \right) -b \right\| ^2+\left( \lambda ^* \right) ^TA\left( x^{\mu}\left( t \right) -x^* \right) \right) 
\\
\mathrm{V}_2\left( t \right) =D_{\psi ^*}\left( u^{\mu}\left( t \right) ,u^* \right) =\psi ^*\left( u^{\mu}\left( t \right) \right) -\psi ^*\left( u^* \right) -\nabla \psi ^*\left( u^* \right) ^T\left( u^{\mu}\left( t \right) -u^* \right) ,
\\
\mathrm{V}_3\left( t \right) =D_h\left( v^{\mu}\left( t \right) ,\lambda ^* \right) =\frac{1}{2}\left\| v^{\mu}\left( t \right) -\lambda ^* \right\| ^2.
\end{cases}
\end{align*}
Note that the function $\mathrm{V}_1\left( t \right)$ is positive for any $t\geq t_0>0$ since  
	\begin{align*}
	&\hat{\mathcal{L}}_{\beta}\left( x^{\mu}\left( t \right) ,\lambda ^* \right) -\hat{\mathcal{L}}_{\beta}\left( x^*,\lambda ^{\mu}\left( t \right) \right) +4\kappa _{\hat{f}}\mu \left( t \right) 
	\\
	\ge& f\left( x^{\mu}\left( t \right) \right) -f\left( x^* \right) +\left( \lambda ^* \right) ^TA\left( x^{\mu}\left( t \right) -x^* \right) +\frac{\beta}{2}\left\| Ax^{\mu}\left( t \right) -b \right\| ^2+2\kappa _{\hat{f}}\mu \left( t \right) 
	\\
	=& \mathcal{L} _{\beta}\left( x^{\mu}\left( t \right) ,\lambda ^* \right) -\mathcal{L} _{\beta}\left( x^*,\lambda ^{\mu}\left( t \right) \right) +2\kappa _{\hat{f}}\mu \left( t \right) \ge 0,
		\end{align*}
where the first equality holds from $\hat{f}\left( x^{\mu}\left( t \right) ,\mu \left( t \right) \right) -f\left( x^{\mu}\left( t \right) \right) +\kappa _{\hat{f}}\mu \left( t \right) \geq 0$, $f\left( x^* \right) -f\left( x^*,\mu \left( t \right) \right) +\kappa _{\hat{f}}\mu \left( t \right) \geq 0$ in \eqref{Defi_sm} \textit{(iv)}, and the second inequality is satisfied due to \eqref{sd} in \eqref{saddle}. In addition, since $\psi ^*\left( \cdot \right)$ and $h\left( \cdot \right) =\frac{1}{2}\left\| \cdot \right\| ^2$ are convex, such that $D_{\psi ^*}\left( u^{\mu}\left( t \right), u^* \right) \geq 0$, $ D_h\left( v^{\mu}\left( t \right),\lambda ^* \right) \geq0$. Thus, we end up with  $\mathrm{V}\left( t \right) \geq0$.

The derivatives of $\mathrm{V}_1\left( t \right) $, $\mathrm{V}_2\left( t \right) $ and $\mathrm{V}_3\left( t \right) $ along the trajectory of SAPDMD \eqref{PDM_sm} satisfy 
	
\begin{equation}\label{sV1}
\begin{split}
\mathrm{\dot{V}}_1\left( t \right) 
=&\frac{2t}{\alpha ^2}\left( \begin{array}{c}
		\hat{f}\left( x^{\mu}\left( t \right) ,\mu \left( t \right) \right) -f\left( x^*,\mu \left( t \right) \right) +\left( \lambda ^* \right) ^TA\left( x^{\mu}\left( t \right) -x^* \right)\\
	\end{array} \right. 
	\\
	&\left. +4\kappa _{\hat{f}}\mu \left( t \right) +\frac{\beta}{2}\left\| Ax^{\mu}\left( t \right) -b \right\| ^2 \right) +\frac{t}{\alpha}\left( \nabla \psi ^*\left( u^{\mu}\left( t \right) \right) -\nabla \psi ^*\left( u^* \right) \right) ^T
	\\
	&\times \left( \nabla _x\hat{f}\left( x^{\mu}\left( t \right) ,\mu \left( t \right) \right) +\beta A^T\left( Ax^{\mu}\left( t \right) -b \right) +A^T\lambda ^* \right) 
	\\
	&+\frac{t}{\alpha}\left( \nabla _x\hat{f}\left( x^{\mu}\left( t \right) ,\mu \left( t \right) \right) +\beta A^T\left( Ax^{\mu}\left( t \right) -b \right) +A^T\lambda ^* \right) ^T
	\\
	&\times \left( \nabla \psi ^*\left( u^* \right) -x^{\mu}\left( t \right) \right) +\frac{2t^2}{\alpha ^2}\kappa _{\hat{f}}\dot{\mu}\left( t \right) 
	\\
	&+\frac{t^2}{\alpha ^2}\left( \nabla _{\mu}\hat{f}\left( x^{\mu}\left( t \right) ,\mu \left( t \right) \right) +\nabla _{\mu}\hat{f}\left( x^*,\mu \left( t \right) \right) +2\kappa _{\hat{f}} \right) \dot{\mu}\left( t \right),
	\end{split}
	\end{equation}
		\begin{equation}\label{sV2}
			\begin{split}
			\mathrm{\dot{V}}_2\left( t \right) =&-\frac{t}{\alpha}\left( \nabla \psi ^*\left( u^{\mu}\left( t \right) \right) -\nabla \psi ^*\left( u^* \right) \right) ^T\left( A^Tv^{\mu}\left( t \right) -A^T\lambda ^* \right) 
			\\
			&-\left( \nabla _x\hat{f}\left( x^{\mu}\left( t \right) ,\mu \left( t \right) \right) +\beta A^TA\left( x^{\mu}\left( t \right) -x^* \right) +A^T\lambda ^* \right) ^T
			\\
			&\times \frac{t}{\alpha}\left( \nabla \psi ^*\left( u^{\mu}\left( t \right) \right) -\nabla \psi ^*\left( u^* \right) \right),
				\end{split}
			\end{equation}
	
	\begin{align}\label{sV3}
	\mathrm{\dot{V}}_3\left( t \right) =\left( v^{\mu}\left( t \right) -\lambda ^* \right) ^T\dot{v}^{\mu}\left( t \right) =\frac{t}{\alpha}\left( v^{\mu}\left( t \right) -\lambda ^* \right) ^TA\left( \nabla \psi ^*\left( u^{\mu}\left( t \right) \right) -x^* \right) .
	\end{align}	
		
	Combining \eqref{sV1}, \eqref{sV2}, \eqref{sV3} and rearranging it yields
	\begin{equation}\label{sVend}
	\begin{split}
		\mathrm{\dot{V}}\left( t \right) =&\mathrm{\dot{V}}_1\left( t \right) +\mathrm{\dot{V}}_2\left( t \right) +\mathrm{\dot{V}}_3\left( t \right) 
		\\
		\le&\frac{2t}{\alpha ^2}\left( \hat{f}\left( x^{\mu}\left( t \right) ,\mu \left( t \right) \right) -\hat{f}\left( x^*,\mu \left( t \right) \right) +\frac{\beta}{2}\left\| Ax^{\mu}\left( t \right) -b \right\| ^2 \right. +4\kappa _{\hat{f}}\mu \left( t \right) 
		\\
		&\left. +\left( \lambda ^* \right) ^TA\left( x^{\mu}\left( t \right) -x^* \right) \right) -\frac{t}{\alpha}\left( \hat{f}\left( x^{\mu}\left( t \right) ,\mu \left( t \right) \right) -\hat{f}\left( x^*,\mu \left( t \right) \right) \right. 
		\\
		&\left. +\beta \left\| Ax^{\mu}\left( t \right) -b \right\| ^2+\left( \lambda ^* \right) ^TA\left( x^{\mu}\left( t \right) -x^* \right) \right) +\frac{2t^2}{\alpha ^2}\kappa _{\hat{f}}\dot{\mu}\left( t \right) 
		\\
	\le& \frac{\left( 2-\mathrm{\alpha} \right) t}{\alpha ^2}\left( \hat{f}\left( x^{\mu}\left( t \right) ,\mu \left( t \right) \right) -\hat{f}\left( x^*,\mu \left( t \right) \right) ^2 \right. +4\kappa _{\hat{f}}\mu \left( t \right) 
		\\
	&\left. +\left( \lambda ^* \right) ^TA\left( x^{\mu}\left( t \right) -x^* \right) +\frac{\beta}{2}\left\| Ax^{\mu}\left( t \right) -b \right\| \right) 
		\\
			&-\frac{t\beta}{2\alpha}\left\| Ax^{\mu}\left( t \right) -b \right\| ^2+\frac{4t}{\alpha}\kappa _{\hat{f}}\mu \left( t \right) +\frac{2t^2}{\alpha ^2}\kappa _{\hat{f}}\dot{\mu}\left( t \right) 
			\\
			\le& -\frac{\left( \mathrm{\alpha}-2 \right) t}{\alpha ^2}\left( \hat{\mathcal{L}}_{\beta}\left( x^{\mu}\left( t \right) ,\lambda ^* \right) -\hat{\mathcal{L}}_{\beta}\left( x^*,\lambda ^{\mu}\left( t \right) \right) \right. 
			\left. +4\kappa _{\hat{f}}\mu \left( t \right) \right)
			\\
			 &-\frac{t\beta}{2\alpha}\left\| Ax^{\mu}\left( t \right) -b \right\| ^2
				\le 0,\forall \,\,x^{\mu}\left( t \right) \in \mathcal{X} ,t\ge t_0>0,
		\end{split}
		\end{equation}	
			where the first inequality holds since $\nabla _{\mu}\hat{f}\left( x^{\mu}\left( t \right) ,\mu \left( t \right) \right) +\nabla _{\mu}\hat{f}\left( x^*,\mu \left( t \right) \right) +2\kappa _{\hat{f}}\geq 0$ and $\dot{\mu}\left( t \right) \leq 0$, the second inequality is satisfied because of the convexity of $\hat{f}\left( x^{\mu}\left( t \right),\mu \left( t \right) \right)$ of $x^{\mu}\left( t \right) \in \mathcal{X} $ with any fixed $\mu \left( t \right) \in \left[ 0,\bar{\mu} \right] $, the third inequality holds from $\mu \left( t \right) \leq \frac{\mu _0}{t^{2\mathrm{\alpha}}}$ (i.e., $\dot{\mu}\left( t \right) \leq-\mu _0\left( 2\mathrm{\alpha} \right) t^{\left( -2\alpha -1 \right)}$ and $\frac{t^2}{\alpha ^2}\kappa _{\hat{f}}\geq 0$ imply $\frac{2t}{\alpha}\kappa _{\hat{f}}\mu \left( t \right) +\frac{t^2}{\alpha ^2}\kappa _{\hat{f}}\dot{\mu}\left( t \right) \leq \frac{2t}{\alpha}\kappa _{\hat{f}}\mu _0t^{-2\alpha}-\frac{t^2}{\alpha ^2}\kappa _{\hat{f}}\mu _0\left( 2\mathrm{\alpha} \right) t^{-2\alpha -1}=\frac{2t}{\alpha}\kappa _{\hat{f}}\mu _0t^{-2\alpha}-\frac{2t}{\alpha ^2}\kappa _{\hat{f}}\mu _0t^{-2\alpha}=0$), and the last inequality holds since $\mathrm{\alpha}\geq 2$ and $\hat{\mathcal{L}}_{\beta}\left( x^{\mu}\left( t \right) ,\lambda ^* \right) -\hat{\mathcal{L}}_{\beta}\left( x^*,\lambda ^{\mu}\left( t \right) \right) +4\kappa _{\hat{f}}\mu \left( t \right) \geq0$.
			
		Recall that the Lyapunov function $\mathrm{V}\left( t \right)$ is radically unbounded and positive with any $t\geq t_0>0$. This implies that the trajectories of $ x^{\mu}\left( t \right)$ and $\lambda^{\mu} \left( t \right)$ are bounded for any $t\geq t_0>0$. The proof of (I) is therefore completed.
			
		\textit{(II)}: From \eqref{sVend}, we have that $\dot{\mathrm{V}}\left( t \right)\leq0$, i.e., $\mathrm{V}\left( t \right)$ is nonincreasing on $t \in\left[t_0,+\infty \right)$. It yields to $\frac{t^2}{\alpha ^2}\left( \hat{\mathcal{L}}_{\beta}\left( x^{\mu}\left( t \right) ,\lambda ^* \right) -\hat{\mathcal{L}}_{\beta}\left( x^*,\lambda ^{\mu}\left( t \right) \right) +4\kappa _{\hat{f}}\mu \left( t \right) \right) =\mathrm{V}_1\left( t \right) \leq \mathrm{V}\left( t\right) \leq \mathrm{V}\left( t_0 \right)$, i.e., \eqref{sc1} holds. Furthermore, the \eqref{sc1} implies $\left\| Ax^\mu \left( t \right)\left( t \right) -b \right\| ^2$\\$\leq \frac{2\alpha ^2V\left( t_0 \right)}{\beta t^2}$, i.e., \eqref{sc2} is satisfied. 
			
		Note that $\mathrm{\dot{V}}\left( t \right) \leq-\frac{\left( \mathrm{\alpha}-2 \right) t}{\alpha ^2}\left( \hat{\mathcal{L}}_{\beta}\left( x^{\mu}\left( t \right),\lambda ^* \right) -\hat{\mathcal{L}}_{\beta}\left( x^*,\lambda ^{\mu}\left( t \right) \right) +4\kappa _{\hat{f}}\mu \left( t \right) \right)$\\$-\frac{t\beta}{2\alpha}\left\| Ax^{\mu}\left( t \right) -b \right\| ^2 $ in \eqref{sVend}, 
		and integrating it from $t_0$ to $+\infty $ and rearranging it, we have
			\begin{align*}
				\frac{\alpha -2}{\alpha ^2}&\int_{t_0}^{+\infty}{t\left( \hat{\mathcal{L}}_{\beta}\left( x^{\mu}\left( t \right) ,\lambda ^* \right) -\hat{\mathcal{L}}_{\beta}\left( x^*,\lambda ^{\mu}\left( t \right) \right) +4\kappa _{\hat{f}}\mu \left( t \right) \right)}dt
				\\
				=&\mathrm{V}\left( t_0 \right) -\mathrm{V}\left( +\infty \right) \le V\left( t_0 \right) <+\infty,
				\\
				\frac{\beta}{2\alpha}&\int_{t_0}^{+\infty}{t\left\| Ax^{\mu}\left( t \right) -b \right\| ^2}dt=\mathrm{V}\left( t_0 \right) -\mathrm{V}\left( +\infty \right) \le V\left( t_0 \right) <+\infty,
				\end{align*}
			which implies both \eqref{sc3} and  \eqref{sc4} hold.
			
			\textit{(III)}: The equations $\lambda ^*=\begin{cases}
					0, \,\,\,\,\,\,\,\,\,\,\,\,\,\,\,\,\,\,\,\,\,\,\,\,\, if\,\,Ax^{\mu}\left( t \right) -b=0,\\
					\frac{Ax^{\mu}\left( t \right) -b}{\left\| Ax^{\mu}\left( t \right) -b \right\|},if\,\,Ax^{\mu}\left( t \right) -b\ne 0,\\
				\end{cases}$ and \eqref{sc1} give $\left\| Ax^{\mu}\left( t \right) -b \right\| \leq\frac{\alpha \sqrt{2V\left( t_0 \right)}}{\sqrt{\beta}t}$. 
			
			According to the upper-bound of \eqref{sd} in \eqref{saddle} with $\beta =0$, one has 
		\begin{equation*}
			\begin{split}	
					&\mathcal{L} _0\left( x^{\mu}\left( t \right) ,\lambda ^* \right) -\mathcal{L} _0\left( x^*,\lambda ^{\mu}\left( t \right) \right) 
					\\
					=&f\left( x^{\mu}\left( t \right) \right) -f\left( x^* \right) \geq -\left( Ax^{\mu}\left( t \right) -b \right) ^T\lambda ^*, \forall \,\,\left( x^{\mu}\left( t \right) ,\lambda ^{\mu}\left( t \right) \right) \in \mathcal{X} \times \mathbb{R}^m,
				\end{split}
		     \end{equation*}	
	and from  (iv) in \eqref {Defi_sm}, one also has 
	 \begin{equation*}
		\begin{split}
		 &\hat{f}\left( x^{\mu}\left( t \right) ,\mu \left( t \right) \right) -\hat{f}\left( x^*,\mu \left( t \right) \right) +4\kappa _{\hat{f}}\mu \left( t \right) 
		 \\
		 \geq& f\left( x^{\mu}\left( t \right) \right) -f\left( x^* \right) \geq -\left( Ax^{\mu}\left( t \right) -b \right) ^T\lambda ^*
		 \\
		 \geq&-\left\| Ax^{\mu}\left( t \right) -b \right\| \ge -\frac{\alpha \sqrt{2V\left( t_0 \right)}}{t\sqrt{\beta}}, \forall \,\,\left( x^{\mu}\left( t \right), \lambda ^{\mu}\left( t \right) \right) \in \mathcal{X} \times \mathbb{R}^m.
		\end{split}
		 \end{equation*}	
	
	Since $\left\| Ax^{\mu}\left( t \right) -b \right\| \geq 0$ and \eqref{sc1} imply $f\left( x^{\mu}\left( t \right) \right) -f\left( x^* \right) \leq \frac{\alpha ^2\mathrm{V}\left( t_0 \right)}{t^2}$, further, we deduce for any $t\geq t_0>0$ that $-\frac{\alpha \sqrt{2\mathrm{V}\left( t_0 \right)}}{t\sqrt{\beta}}\le f\left( x^{\mu}\left( t \right) \right) -f\left( x^* \right) \le \frac{\alpha ^2\mathrm{V}\left( t_0 \right)}{t^2}$ is true. Therefore, we can obtain the conclusion (III).
	\end{proof}

Similar to the smooth case, the SAPDMD \eqref{PDM_sm} can also be used to address the DCCP \eqref{P2} and DEMO \eqref{P3} with nonsmooth convex objections.

\subsection{SAPDMD for DCCP in the nonsmooth case} Based on the SAPDMD \eqref{PDM_sm}, a smoothing accelerated dirtibuted primal-dual mirror dynamical (SADPDMD) approach for DCCP \eqref{P2} with  nonsmooth convex objective functions is given by:
\begin{align}\label{SADPDMD}
	\begin{cases}
		\dot{x}^{\mu}\left( t \right) =\frac{\alpha}{t}\left( \nabla \psi ^*\left( u^{\mu}\left( t \right) \right) -x^{\mu}\left( t \right) \right) ,
		\\
		\dot{u}^{\mu}\left( t \right)=-\frac{t}{\alpha}\left( \nabla _x\hat{f}\left( x^{\mu}\left( t \right) ,\mu \left( t \right) \right) +\beta Lx^{\mu}\left( t \right)+L\left(v^{\mu}\left( t \right) \right) \right),
		\\
		\dot{\lambda}^{\mu}\left( t \right) =\frac{\alpha}{t}\left( v^{\mu}\left( t \right) -\lambda ^{\mu}\left( t \right) \right), \dot{v}^{\mu}\left( t \right) =\frac{t}{\alpha}L\nabla \psi ^*\left( u^{\mu}\left( t \right) \right) ,
		\\
		x^{\mu}\left( t_0 \right) =x_{0}^{\mu},\nabla \psi ^*\left( u^{\mu}\left( t_0 \right) \right) =x_{0}^{\mu},\lambda ^{\mu}\left( t_0 \right) =\lambda _{0}^{\mu}, v^{\mu}\left( t_0 \right) =v_{0}^{\mu}, \mu _0=\bar{\mu}>0,                             \\
	\end{cases}
\end{align}
where $t\geq t_0>0$, $\beta>0$, $\alpha \geq 2$, and $\mu \left( t \right) \leq \frac{\mu _0}{t^{2\mathrm{\alpha}}}$.

\subsection{The accelerated convergence of the SADPDMD} Now, let's discuss accelerated convergence properties of SADPDMD \eqref{SADPDMD} with the help of Lyapunov analysis tool.
\begin{theorem}
	Suppose that Assumption 3.2 holds and the objective function is nonsmooth. Let $\left(x^{\mu}\left( t \right), \lambda^{\mu}\left( t \right) \right) $ and $\left(x^*, \lambda^*\right) $ be a solution trajectory and an optimal solution of SADPDMD \eqref{SADPDMD}, respectively. Let $\hat{\mathsf{L}}_{\beta}\left( x^{\mu}\left( t \right) ,\lambda ^{\mu}\left( t \right) \right) =\hat{f}\left( x^{\mu}\left( t \right) ,\mu \left( t \right) \right)$\\$+\frac{\beta}{2}\left( x^{\mu}\left( t \right) \right) ^TL\left( x^{\mu}\left( t \right) \right) +\left( \lambda ^{\mu}\left( t \right) \right) ^TLx^{\mu}\left( t \right) $ with $\beta \geq 0$, then, for $\alpha \geq2$, one has 
	
	\textit{(I)}: $ x^{\mu}\left( t \right)$, $\lambda^{\mu} \left( t \right)$, $u^{\mu}\left( t \right)$, $v^{\mu}\left( t \right) $ are bounded for any $t\geq t_0>0$.
	
	(II): Then, we have 
	\begin{subequations}
		\begin{align}
			&0\le \hat{\mathsf{L}}_{\beta}\left( x^{\mu}\left( t \right) ,\lambda ^* \right) -\hat{\mathsf{L}}_{\beta}\left( x^*,\lambda ^{\mu}\left( t \right) \right) +4\kappa _{\hat{f}}\mu \left( t \right) \le \frac{\alpha ^2\mathbb{V} \left( t_0 \right)}{t^2},\label{dsc1}
			\\
			&x^{\mu}\left( t \right) Lx^{\mu}\left( t \right) \le \frac{2\alpha ^2\mathbb{V} \left( t_0 \right)}{\beta t^2},\label{dsc2}
			\\
			&\int_{t_0}^{+\infty}{x^{\mu}\left( t \right) Lx^{\mu}\left( t \right)}\le +\infty,\label{dsc3}
			\\
			&\int_{t_0}^{+\infty}{t\left( \hat{\mathsf{L}}_{\beta}\left( x^{\mu}\left( t \right) ,\lambda ^* \right) -\hat{\mathsf{L}}_{\beta}\left( x^*,\lambda ^{\mu}\left( t \right) \right) +2\kappa _{\hat{f}}\mu \left( t \right) \right) dt}<+\infty, \label{dsc4}
		\end{align}
	\end{subequations}
	
	(III): Let $\lambda ^*=\begin{cases}
		0, \,\,\,\,\,\,\,\,\,\,\,\,\,\,\,\,\,\, if\,\,Lx^{\mu}\left( t \right)=0,\\
		\frac{Lx^{\mu}\left( t \right)}{\left\| Lx^{\mu}\left( t \right)\right\|},if\,\,Lx^{\mu}\left( t \right) \ne 0,\\
	\end{cases}$. For any $t\geq t_0>0$, it follows that 
	\begin{subequations}
		\begin{align}
			&f\left( x^{\mu}\left( t \right) \right) -f\left( x^* \right) +\left\| Lx^{\mu}\left( t \right)\right\| \leq \frac{\alpha ^2 \mathbb{V}\left( t_0 \right)}{t^2};\label{dsc5}
			\\
			&-\frac{\alpha \sqrt{2\mathbb{V}\left( t_0 \right)}}{t\sqrt{\beta}}\le f\left( x^{\mu}\left( t \right) \right) -f\left( x^* \right) \le \frac{\alpha ^2\mathbb{V}\left( t_0 \right)}{t^2}. \label{dsc6}
		\end{align}
	\end{subequations}
\end{theorem}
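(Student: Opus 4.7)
The plan is to mimic the three-step Lyapunov strategy used for Theorem~\ref{Theorem_sapdmd} (the centralized SAPDMD) but replace the affine residual $Ax-b$ by the Laplacian residual $Lx$, exploiting that $Lx^{*}=0$ at consensus. Concretely, I would introduce the candidate Lyapunov function
\begin{align*}
\mathbb{V}(t) = \mathbb{V}_1(t) + \mathbb{V}_2(t) + \mathbb{V}_3(t),
\end{align*}
with
\begin{align*}
\mathbb{V}_1(t) &= \tfrac{t^2}{\alpha^2}\bigl(\hat{\mathsf{L}}_{\beta}(x^{\mu}(t),\lambda^*) - \hat{\mathsf{L}}_{\beta}(x^*,\lambda^{\mu}(t)) + 4\kappa_{\hat{f}}\mu(t)\bigr), \\
\mathbb{V}_2(t) &= D_{\psi^*}(u^{\mu}(t),u^*), \qquad \mathbb{V}_3(t) = \tfrac{1}{2}\|v^{\mu}(t)-\lambda^*\|^2.
\end{align*}
Positivity of $\mathbb{V}_1$ would be argued exactly as in Theorem~\ref{Theorem_sapdmd}: the smoothing bound $|\hat{f}(\cdot,\mu)-f(\cdot)|\le \kappa_{\hat{f}}\mu$ from Definition~\ref{Defi_sm}(iv) absorbs the $4\kappa_{\hat{f}}\mu(t)$ slack, so that $\mathbb{V}_1(t)$ dominates the genuine primal--dual gap $\mathsf{L}_{\beta}(x^{\mu}(t),\lambda^*) - \mathsf{L}_{\beta}(x^*,\lambda^{\mu}(t)) + 2\kappa_{\hat{f}}\mu(t)\ge 0$, using the saddle property and $Lx^*=0$. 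Positivity of $\mathbb{V}_2,\mathbb{V}_3$ is automatic from Bregman nonnegativity.

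Next I would differentiate $\mathbb{V}(t)$ along \eqref{SADPDMD}. The computations of $\dot{\mathbb{V}}_1,\dot{\mathbb{V}}_2,\dot{\mathbb{V}}_3$ are parallel to \eqref{sV1}--\eqref{sV3}, with $A^T(Ax-b)$ replaced by $Lx$ and $A\nabla\psi^*(u)-b$ replaced by $L\nabla\psi^*(u)$; crucially, the symmetry and positive semidefiniteness of $L$, together with $\nabla\psi^*(u^*)=x^*$ and $Lx^*=0$, make the cross terms involving $v^{\mu}-\lambda^*$ cancel exactly as they do for the affine case. Summing and using convexity of $\hat{f}(\cdot,\mu(t))$ in its first argument would give, after rearrangement,
\begin{align*}
\dot{\mathbb{V}}(t) \le -\tfrac{(\alpha-2)t}{\alpha^2}\bigl(\hat{\mathsf{L}}_{\beta}(x^{\mu}(t),\lambda^*)-\hat{\mathsf{L}}_{\beta}(x^*,\lambda^{\mu}(t))+4\kappa_{\hat f}\mu(t)\bigr)-\tfrac{\beta t}{2\alpha}\, x^{\mu}(t)^TLx^{\mu}(t)+R(t),
\end{align*}
where $R(t) = \tfrac{2t}{\alpha}\kappa_{\hat f}\mu(t)+\tfrac{t^2}{\alpha^2}\kappa_{\hat f}\dot\mu(t)$ collects the extra smoothing contributions from $\nabla_\mu\hat f$. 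The decay choice $\mu(t)\le \mu_0/t^{2\alpha}$ (hence $\dot\mu(t)\le -2\alpha\mu_0 t^{-2\alpha-1}$) forces $R(t)\le 0$, so $\dot{\mathbb{V}}(t)\le 0$ whenever $\alpha\ge 2$, proving boundedness of $x^{\mu},\lambda^{\mu},u^{\mu},v^{\mu}$ and yielding (I).

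Part (II) is then a direct consequence: from $\mathbb{V}(t)\le \mathbb{V}(t_0)$ and $\mathbb{V}_1(t)\le \mathbb{V}(t)$ we get \eqref{dsc1}; since $\beta\, x^{\mu}(t)^T L x^{\mu}(t)/2$ sits inside $\hat{\mathsf L}_\beta(x^\mu,\lambda^*)-\hat{\mathsf L}_\beta(x^*,\lambda^\mu)$, dividing by $\beta$ gives \eqref{dsc2}; integrating the decay inequality on $[t_0,+\infty)$ and using $\mathbb{V}(+\infty)\ge 0$ yields the integrability statements \eqref{dsc3} and \eqref{dsc4}. For (III), plugging the normalized dual certificate $\lambda^*=Lx^\mu(t)/\|Lx^\mu(t)\|$ into the saddle-point inequality (as in \eqref{v_in4}) converts \eqref{dsc1} into \eqref{dsc5}; the two-sided bound \eqref{dsc6} follows by combining the saddle lower bound $f(x^\mu(t))-f(x^*)\ge -\|Lx^\mu(t)\|$ (which together with \eqref{dsc2} gives the $-\alpha\sqrt{2\mathbb{V}(t_0)}/(t\sqrt{\beta})$ lower bound) with the upper bound $f(x^\mu(t))-f(x^*)\le \alpha^2\mathbb{V}(t_0)/t^2$ from \eqref{dsc5}. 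The main obstacle I anticipate is verifying that the cancellation between the primal and dual Bregman derivatives that made \eqref{Vend} collapse in the affine case still goes through verbatim once $A^T$ is replaced by $L$; this hinges on $L=L^T$ and on the fact that the dual dynamics $\dot v=\tfrac{t}{\alpha}L\nabla\psi^*(u^\mu)$ produce precisely the term $(\nabla\psi^*(u^\mu)-\nabla\psi^*(u^*))^T L(v^\mu-\lambda^*)$ needed to kill the corresponding piece in $\dot{\mathbb V}_2$, so no new structural ingredient is required beyond careful bookkeeping of $L$-weighted inner products.
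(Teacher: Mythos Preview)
Your proposal is correct and follows essentially the same approach as the paper: the paper constructs the identical Lyapunov function $\mathbb{V}=\mathbb{V}_1+\mathbb{V}_2+\mathbb{V}_3$ with exactly the components you wrote down, and then simply states that the conclusions follow by repeating the steps of Theorem~\ref{Theorem_sapdmd}, which is precisely the program you outlined (including the $R(t)\le 0$ argument from the decay of $\mu(t)$ and the cancellation of the $L$-weighted cross terms using $L=L^{T}$ and $Lx^{*}=0$).
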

\begin{proof}
	Construct the candidate smoothing Lyapunov function as  
	\begin{align}\label{energy_SD1}
		\mathbb{V} =\mathbb{V}_1+\mathbb{V}_2+\mathbb{V}_3,
	\end{align}
	with 
	\begin{align*}	
		\begin{cases}
			\mathbb{V} _1\left( t \right) =&\frac{t^2}{\alpha ^2}\left( \hat{\mathsf{L}}_{\beta}\left( x^{\mu}\left( t \right) ,\lambda ^* \right) - \hat{\mathsf{L}}_{\beta}\left( x^*,\lambda ^{\mu}\left( t \right) \right) +4\kappa _{\hat{f}}\mu \left( t \right) \right),
			\\
			\mathbb{V} _2\left( t \right) =&D_{\psi ^*}\left( u^{\mu}\left( t \right) ,u^* \right) =\psi ^*\left( u^{\mu}\left( t \right) \right) -\psi ^*\left( u^* \right) -\nabla \psi ^*\left( u^* \right) ^T\left( u^{\mu}\left( t \right) -u^* \right),
			\\
			\mathbb{V} _3\left( t \right) =&D_h\left( v^{\mu}\left( t \right) ,\lambda ^* \right) =\frac{1}{2}\left\| v^{\mu}\left( t \right) -\lambda ^* \right\| ^2.
		\end{cases}
	\end{align*}
	
	Following similar steps as the proof in \eqref{Theorem_sapdmd}, the conclusions can be obtained easily. Due to space limitations, we omit the proof here. 
\end{proof}

\subsection{SADMD for DEMO in the nonsmooth case} 
To address the DEMO \eqref{P3} where the objective function is nonsmooth, a smoothing accelerated distributed mirror dynamical (SADMD) approach inspired by the SAPDMD \eqref{PDM_sm} and distributed consensus theorem is given by 
\begin{align}\label{SADMD}
	\begin{cases}
		\dot{x}^{\mu}\left( t \right) =\frac{\alpha}{t}\left( \nabla \psi ^*\left( u^{\mu}\left( t \right) \right) -x^{\mu}\left( t \right) \right) ,
		\\
		\dot{u}^{\mu}\left( t \right) =-\frac{t}{\alpha}\left( \nabla _x\hat{f}\left( x^{\mu}\left( t \right) ,\mu \left( t \right) \right) +\bar{A}^Tv^{\mu}\left( t \right) \right) ,
		\\
		\dot{\lambda}^{\mu}\left( t \right) =\frac{\alpha}{t}\left( v^{\mu}\left( t \right) -\lambda ^{\mu}\left( t \right) \right) ,
		\\
		\dot{v}^{\mu}\left( t \right) =\frac{t}{\alpha}\left( \bar{A}\psi ^*\left( u^{\mu}\left( t \right) \right) -d-L\lambda ^{\mu}\left( t \right) +Lz^{\mu}\left( t \right) \right) ,
		\\
		\dot{y}^{\mu}\left( t \right) =\frac{\alpha}{t}\left( z^{\mu}\left( t \right) -y^{\mu}\left( t \right) \right) ,\dot{z}\left( t \right) =-\frac{t}{\alpha}Lv^{\mu}\left( t \right) ,
		\\
		x^{\mu}\left( t_0 \right) =x_{0}^{\mu},u^{\mu}\left( t_0 \right) =u_{0}^{\mu}\,\,\mathrm{with}\,\,\nabla \psi ^*\left( u_0 \right) =x_{0}^{\mu}\in \mathcal{X} ,
		\\
		\lambda ^{\mu}\left( t_0 \right) =\lambda _{0}^{\mu},v^{\mu}\left( t_0 \right) =v_{0}^{\mu},y^{\mu}\left( t_0 \right) =y_{0}^{\mu},z^{\mu}\left( t_0 \right) =z_{0}^{\mu}, \mu _0=\bar{\mu}>0,   
	\end{cases}
\end{align}
where $t\geq t_0>0$, $\beta>0$, $\alpha \geq 2$, and $\mu \left( t \right) \leq \frac{\mu _0}{t^{2\mathrm{\alpha}}}$.

The accelerated convergence properties of SADMD \eqref{SADMD} will be demonstrated in the following theorem.
\begin{theorem}\label{Theorem_sadmd}
	Suppose that Assumption 3.2 holds, except that the objective function is nonsmooth. Let $\left(x^{\mu}\left( t \right), \lambda^{\mu}\left( t \right), y^{\mu}\left( t \right)\right)$ and $\left(x^*, \lambda^*, y^*\right)$ be a solution trajectory and an optimal solution of SADMD \eqref{SADMD}, respectively. Then, for any initial values $\left( x^{\mu}\left( t_0 \right) ,u^{\mu}\left( t_0 \right) ,\lambda ^{\mu}\left( t_0 \right) , \right. 
	\\
	\left. \upsilon ^{\mu}\left( t_0 \right) ,y^{\mu}\left( t_0 \right) ,z^{\mu}\left( t_0 \right) \right) \in \mathcal{X} \times \mathbb{R} ^{\sum_{i=1}^n{p_i}}\times \mathbb{R} ^{nm}\times \mathbb{R} ^{nm}\times \mathbb{R} ^{nm}\times \mathbb{R} ^{nm}$, the following statements are true.
	
	\textit{(I)}: The trajectories of $x^{\mu}\left( t \right)$, $\lambda^{\mu} \left( t \right)$ and $y^{\mu}\left( t \right)$ are bounded for any $t\geq t_0>0$.
	
	(II): Let $$\hat{\mathbb{L}}_{1}\left( x,\lambda ,y \right) =\hat{f}\left( x^{\mu}\left( t \right) ,\mu \left( t \right) \right) -\frac{1}{2}\lambda ^{\mu}\left( t \right) ^TL\lambda ^{\mu}\left( t \right) +\lambda ^{\mu}\left( t \right) ^T\left( \bar{A}x^{\mu}\left( t \right) -d-Ly^{\mu}\left( t \right) \right),$$ then, it follows 
	\begin{subequations}
		\begin{align}
			&\hat{\mathbb{L}}_1\left( x^{\mu}\left( t \right) ,y^*,\lambda ^* \right) -\hat{\mathbb{L}}_1\left( x^*,y^*,\lambda ^{\mu}\left( t \right) \right) +4\kappa _{\hat{f}}\mu \left( t \right) \le \frac{\alpha ^2\mathbb{E}\left( t_0 \right)}{t^2}, \label{SSSc1}
			\\
			&\lambda^{\mu} \left( t \right) ^TL\lambda^{\mu} \left( t \right) \le \frac{2\alpha ^2\mathbb{E}\left( t_0 \right)}{t^2},\label{SSSc2}
			\\
			&\int_{t_0}^{+\infty}{t\left( \hat{\mathbb{L}}_1\left( x^{\mu}\left( t \right) ,y^*,\lambda ^* \right) -\hat{\mathbb{L}}_1\left( x^*,y^*,\lambda ^{\mu}\left( t \right) \right) +4\kappa _{\hat{f}}\mu \left( t \right) \right)}dt<+\infty.\label{SSSc3}
		\end{align}
	\end{subequations}
\end{theorem}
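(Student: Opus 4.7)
The plan is to mirror the proof strategy of Theorem~\ref{Theorem_sapdmd} but with the four-component Lyapunov function used for ADMD, adapted to the smoothing setting. Concretely, I would introduce
\begin{equation*}
\mathbb{E}(t)=\mathbb{E}_1(t)+\mathbb{E}_2(t)+\mathbb{E}_3(t)+\mathbb{E}_4(t),
\end{equation*}
where
\begin{equation*}
\mathbb{E}_1(t)=\tfrac{t^2}{\alpha^2}\bigl(\hat{\mathbb{L}}_1(x^\mu(t),y^*,\lambda^*)-\hat{\mathbb{L}}_1(x^*,y^*,\lambda^\mu(t))+4\kappa_{\hat f}\mu(t)\bigr),
\end{equation*}
\begin{equation*}
\mathbb{E}_2(t)=D_{\psi^*}(u^\mu(t),u^*),\quad \mathbb{E}_3(t)=\tfrac{1}{2}\|v^\mu(t)-\lambda^*\|^2,\quad \mathbb{E}_4(t)=\tfrac{1}{2}\|z^\mu(t)-y^*\|^2.
\end{equation*}
Nonnegativity of $\mathbb{E}_1$ follows from the saddle-point inequality for $\mathbb{L}_1$ together with the smoothing estimate $|\hat f(x,\mu)-f(x)|\le\kappa_{\hat f}\mu$ in Definition~\ref{Defi_sm}(iv), just as in the proof of Theorem~\ref{Theorem_sapdmd}; the remaining three terms are nonnegative Bregman divergences.

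Next I would differentiate $\mathbb{E}(t)$ along the SADMD trajectory. The computations for $\dot{\mathbb{E}}_2,\dot{\mathbb{E}}_3,\dot{\mathbb{E}}_4$ are essentially identical to those in the ADMD proof, since their dynamics are structurally unchanged apart from the smoothing-gradient substitution in $\dot u^\mu$. The new feature appears in $\dot{\mathbb{E}}_1$, which now carries the extra term
\begin{equation*}
\tfrac{t^2}{\alpha^2}\bigl(\nabla_\mu\hat f(x^\mu(t),\mu(t))+\nabla_\mu\hat f(x^*,\mu(t))+2\kappa_{\hat f}\bigr)\dot\mu(t)+\tfrac{4t}{\alpha}\kappa_{\hat f}\mu(t),
\end{equation*}
exactly as in \eqref{sV1}. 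Using $|\nabla_\mu\hat f|\le\kappa_{\hat f}$ and $\dot\mu(t)\le 0$, the first parenthesis is nonnegative and the product with $\dot\mu(t)$ is nonpositive. The choice $\mu(t)\le\mu_0/t^{2\alpha}$ (so $\dot\mu(t)\le -2\alpha\mu_0 t^{-2\alpha-1}$) yields $\tfrac{2t}{\alpha}\kappa_{\hat f}\mu(t)+\tfrac{t^2}{\alpha^2}\kappa_{\hat f}\dot\mu(t)\le 0$, absorbing the smoothing error. After cancellations analogous to the ADMD computation \eqref{energy_3}, and invoking convexity of $\hat f(\cdot,\mu(t))$ for the primal variable and $Ax^*=\tfrac{1}{n}\sum d_i$ (i.e.\ feasibility via the auxiliary $y^*$), I expect to arrive at
\begin{equation*}
\dot{\mathbb{E}}(t)\le -\tfrac{(\alpha-2)t}{\alpha^2}\bigl(\hat{\mathbb{L}}_1(x^\mu(t),y^*,\lambda^*)-\hat{\mathbb{L}}_1(x^*,y^*,\lambda^\mu(t))+4\kappa_{\hat f}\mu(t)\bigr)\le 0.
\end{equation*}

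From $\dot{\mathbb{E}}(t)\le 0$ and the radial unboundedness of $\mathbb{E}$, statement (I) on boundedness of $x^\mu,\lambda^\mu,y^\mu$ follows immediately. For (II), monotonicity gives $\mathbb{E}_1(t)\le\mathbb{E}(t_0)$, which rearranges to \eqref{SSSc1}; the inequality \eqref{SSSc2} then follows by dropping the nonnegative primal-dual gap and the smoothing term from $\mathbb{E}_1$, leaving $\tfrac{t^2}{\alpha^2}\cdot\tfrac{1}{2}\lambda^\mu(t)^TL\lambda^\mu(t)\le\mathbb{E}(t_0)$. Finally, \eqref{SSSc3} follows by integrating the decay bound on $\dot{\mathbb{E}}(t)$ from $t_0$ to $+\infty$ and using $\mathbb{E}(+\infty)\ge 0$.

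The main obstacle is the bookkeeping in $\dot{\mathbb{E}}_1$: one has to simultaneously handle the coupling through $Ly^*$, the $-\tfrac{1}{2}\lambda^{\mu T}L\lambda^\mu$ quadratic term in $\hat{\mathbb{L}}_1$ (which contributes a cross term with $\dot\lambda^\mu$ that must cancel against pieces coming from $\dot{\mathbb{E}}_3$ and $\dot{\mathbb{E}}_4$), and the smoothing error terms, and verify they combine into the clean coefficient $-(\alpha-2)t/\alpha^2$. Once this algebraic cancellation is carried out exactly as in the ADMD case with the smoothing correction of Theorem~\ref{Theorem_sapdmd}, the three convergence estimates drop out mechanically.
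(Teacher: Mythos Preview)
Your proposal is correct and follows essentially the same route as the paper: the same four-component Lyapunov function $\mathbb{E}=\mathbb{E}_1+\mathbb{E}_2+\mathbb{E}_3+\mathbb{E}_4$, the same handling of the smoothing error via $|\nabla_\mu\hat f|\le\kappa_{\hat f}$ and $\mu(t)\le\mu_0/t^{2\alpha}$, and the same decay estimate $\dot{\mathbb{E}}(t)\le -\tfrac{(\alpha-2)t}{\alpha^2}(\cdot)$ from which (I) and (II) are read off. The only difference is cosmetic: the paper writes $\mathbb{E}=\mathbb{E}_1+\mathbb{E}_2+\mathbb{E}_3$ in the display but then lists four components and differentiates all four, which is clearly a typo; your explicit inclusion of $\mathbb{E}_4$ is the intended argument.
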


\begin{proof}
	\textit{(I)}: Consider a smoothing Lyapunov function as follows:
	\begin{align}\label{SSV}
		\mathbb{E} \left( t \right) =\mathbb{E} _1\left( t \right) +\mathbb{E} _2\left( t \right) +\mathbb{E}_3\left( t \right),
	\end{align}
	with
	\begin{align*}
		\begin{cases}
			\mathbb{E} _1\left( t \right) =\frac{t^2}{\alpha ^2}\left( \mathbb{L} _1\left( x^{\mu}\left( t \right) \left( t \right) ,y^*,\lambda ^* \right) -\mathbb{L} _1\left( x^*,y^*,\lambda ^{\mu}\left( t \right) \right) +4\kappa _{\hat{f}}\mu \left( t \right) \right),
			\\
			\,\,\,\,\,\,\,\,\,\,\,\,\,\,\,\,=\frac{t^2}{\alpha ^2}\left( \hat{f}\left( x^{\mu}\left( t \right), \mu \left( t \right) \right) -\hat{f}\left( x^*,\mu \left( t \right) \right) \right. +\frac{1}{2}\lambda ^{\mu}\left( t \right) ^TL\lambda ^{\mu}\left( t \right),
			\\
			\left.\,\,\,\,\,\,\,\,\,\,\,\,\,\,\,\,\,\,\,+4\kappa _{\hat{f}}\mu \left( t \right) +\left( \lambda ^* \right) ^T\left( \bar{A}x^{\mu}\left( t \right) -d-Ly^* \right) \right),
			\\
			\mathbb{E}_2\left( t \right) =D_{\psi ^*}\left( u^{\mu}\left( t \right) ,u^* \right) =\psi ^*\left( u^{\mu}\left( t \right) \right) -\psi ^*\left( u^* \right) -\nabla \psi ^*\left( u^* \right) ^T\left( u^{\mu}\left( t \right) -u^* \right),
			\\
			\mathbb{E} _3\left( t \right) =D_h\left( v^{\mu}\left( t \right) \left( t \right), \lambda ^* \right) =\frac{1}{2}\left\| v^{\mu}\left( t \right) -\lambda ^* \right\| ^2,
			\\
			\mathbb{E} _4\left( t \right) =D_h\left( z^{\mu}\left( t \right) ,y^* \right) =\frac{1}{2}\left\| z^{\mu}\left( t \right) -y^* \right\| ^2,
		\end{cases}
	\end{align*}
	where $D_{\psi ^*}\left( u, u^* \right)$ and $D_h\left( v,\lambda^* \right) $ are the Bregman divergences associated with $\psi ^*$ for $u, u^*$, $h$ for $v,\lambda ^*$ and $z, y^*$. 
	
	The time derivative of $\mathbb{E}\left( t \right)$ with $\nabla \psi ^*\left( u^* \right) =x^*$ is 
	\begin{align*}
		\dot{\mathbb{E}}\left( t \right) =&\dot{\mathbb{E}}_1\left( t \right) +\dot{\mathbb{E}}_2\left( t \right) +\dot{\mathbb{E}}_3\left( t \right) +\dot{\mathbb{E}}_4\left( t \right) \nonumber
		\\
		\leq& \frac{2t}{\alpha ^2}\left( \hat{f}\left( x^{\mu}\left( t \right) ,\mu \left( t \right) \right) -\hat{f}\left( x^*,\mu \left( t \right) \right) +\left( \lambda ^* \right) ^T\bar{A}\left( x^{\mu}\left( t \right) -x^* \right) \right. +4\kappa _{\hat{f}}\mu \left( t \right) \nonumber
		\\
			&\left. +\frac{1}{2}\lambda ^{\mu}\left( t \right) ^TL\lambda ^{\mu}\left( t \right) \right) -\frac{t}{\alpha}\left( L\lambda ^{\mu}\left( t \right) \right) ^T\lambda ^{\mu}\left( t \right) +\frac{4t}{\alpha}\kappa _{\hat{f}}\mu \left( t \right) +\frac{2t^2}{\alpha ^2}\kappa _{\hat{f}}\dot{\mu}\left( t \right) \nonumber
		\\
	\end{align*}
		\begin{align}\label{SSSEnd}
			&-\frac{t}{\alpha}\left( \hat{f}\left( x^{\mu}\left( t \right) ,\mu \left( t \right) \right) -\hat{f}\left( x^*,\mu \left( t \right) \right) +\left( \lambda ^* \right) ^T\bar{A}\left( x^{\mu}\left( t \right) -x^* \right) +4\kappa _{\hat{f}}\mu \left( t \right) \right) 
		\\
		\le&-\frac{\left( \alpha -2 \right) t}{\alpha ^2}\left( \hat{f}\left( x^{\mu}\left( t \right) ,\mu \left( t \right) \right) -\hat{f}\left( x^*,\mu \left( t \right) \right) +\left( \lambda ^* \right) ^T\bar{A}\left( x^{\mu}\left( t \right) -x^* \right) \right.\nonumber 
		\\
		& +4\kappa _{\hat{f}}\mu \left( t \right) \left. +\frac{1}{2}\lambda ^{\mu}\left( t \right) ^TL\lambda ^{\mu}\left( t \right) \right) \nonumber
		\\
		=&-\frac{\left( \alpha -2 \right) t}{\alpha ^2}\left( \hat{\mathbb{L}}_1\left( x^{\mu}\left( t \right) ,y^*,\lambda ^* \right) -\hat{\mathbb{L}}_1\left( x^*,y^*,\lambda ^{\mu}\left( t \right) \right) +4\kappa _{\hat{f}}\mu \left( t \right) \right) \le 0, \nonumber
	\end{align}
	where the first inequality holds because of $\nabla _{\mu}\hat{f}\left( x^{\mu}\left( t \right) ,\mu \left( t \right) \right) +\nabla _{\mu}\hat{f}\left( x^*,\mu \left( t \right) \right) +2\kappa _{\hat{f}}\geq 0$, $\dot{\mu}\left( t \right) \leq 0$, the second inequality is satisfied due to the convexity of $\hat{f}\left( x^{\mu}\left( t \right),\mu \left( t \right) \right)$ of $x^{\mu}\left( t \right) \in \mathcal{X} $ with any fixed $\mu \left( t \right) \in \left[ 0,\bar{\mu} \right] $ and the property of $\mu \left( t \right) \leq \frac{\mu _0}{t^{2\mathrm{\alpha}}}$ (i.e., $\dot{\mu}\left( t \right) \leq-\mu _0\left( 2\mathrm{\alpha} \right) t^{\left( -2\alpha -1 \right)}$ and $\frac{t^2}{\alpha ^2}\kappa _{\hat{f}}\geq 0$ implies $\frac{2t}{\alpha}\kappa _{\hat{f}}\mu \left( t \right) +\frac{t^2}{\alpha ^2}\kappa _{\hat{f}}\dot{\mu}\left( t \right) \leq \frac{2t}{\alpha}\kappa _{\hat{f}}\mu _0t^{-2\alpha}-\frac{t^2}{\alpha ^2}\kappa _{\hat{f}}\mu _0\left( 2\mathrm{\alpha} \right) t^{-2\alpha -1}=\frac{2t}{\alpha}\kappa _{\hat{f}}\mu _0t^{-2\alpha}-\frac{2t}{\alpha ^2}\kappa _{\hat{f}}\mu _0t^{-2\alpha}=0$), and the last inequality is established since $\mathrm{\alpha}\geq 2$ and $\hat{\mathbb{L}}_{1}\left( x^{\mu}\left( t \right) ,\lambda ^* \right) -\hat{\mathbb{L}}_{1}\left( x^*,\lambda ^{\mu}\left( t \right) \right) +4\kappa _{\hat{f}}\mu \left( t \right) \geq0$.
	
	Recall that the Lyapunov function $\mathbb{E}\left( t \right)$ is radially unbounded and positive for any $t\geq t_0>0$. This implies that the trajectories of $x^{\mu}\left( t \right)$, $\lambda^{\mu} \left( t \right)$ and $y^{\mu}\left( t \right)$ are bounded for any $t\geq t_0>0$. The proof of (I) is completed.
	
	\textit{(II)}: From \eqref{SSSEnd}, we have that $\dot{\mathbb{E}}\left( t \right)\leq0$, i.e., ${\mathbb{E}}\left( t \right)$ is nonincreasing on $\left[ t_0,+\infty \right)$. Thus, for any $t\geq t_0>0$, one has $\hat{\mathbb{L}} _1\left( x^{\mu}\left( t \right) \left( t \right) ,y^*,\lambda ^* \right) -\hat{\mathbb{L}}  _1\left( x^*,y^*,\lambda ^{\mu}\left( t \right) \right) +4\kappa _{\hat{f}}\mu \left( t \right) =\mathbb{E}_1\left( t \right) \le \mathbb{E}\left( t \right) \le \mathbb{E}\left( t_0 \right)$, i.e., $\hat{\mathbb{L}}  _1\left( x\left( t \right) ,y^*,\lambda ^* \right) -\hat{\mathbb{L}} _1\left( x^*,y^*,\lambda \left( t \right) \right)+4\kappa _{\hat{f}}\mu \left( t \right) \le \frac{\alpha ^2\mathbb{E}\left( t_0 \right)}{t^2}$ \eqref{SSSc1} holds. In addition, the \eqref{SSSc1} implies $\lambda^{\mu} \left( t \right) ^TL\lambda^{\mu} \left( t \right) \le \frac{2\alpha ^2E\left( t_0 \right)}{t^2}$, i.e.,  the \eqref{SSSc2} holds.
	
	Since $\dot{\mathbb{E}}\left( t \right) \le -\frac{\left( \alpha -2 \right) t}{\alpha ^2}\left( \hat{\mathbb{L}}_1\left( x^{\mu}\left( t \right) ,y^*,\lambda ^* \right) -\hat{\mathbb{L}}_1\left( x^*,y^*,\lambda ^{\mu}\left( t \right) \right) +4\kappa _{\hat{f}}\mu \left( t \right) \right) $, thus, integrating it from $t_0$ to $+\infty $ and rearranging it, we have
	\begin{align*}
		&\int_{t_0}^{+\infty}{t\left( \hat{\mathbb{L}}_1\left( x^{\mu}\left( t \right) ,y^*,\lambda ^* \right) -\hat{\mathbb{L}}_1\left( x^*,y^*,\lambda ^{\mu}\left( t \right) \right) +4\kappa _{\hat{f}}\mu \left( t \right) \right)}dt
		\\
		=&\frac{\alpha ^2}{\alpha -2}\left( \mathbb{E} \left( t_0 \right) -\mathbb{E} \left( +\infty \right) \right) \le \frac{\alpha ^2}{\alpha -2}\mathbb{E} \left( t_0 \right) <+\infty,
	\end{align*}
	therefore, the proof of \eqref{SSSc3} is completed.
\end{proof}

\section{Numerical experiment}\label{sec:experiments}
In this section, we give several numerical experiments to illustrate the effectiveness and superiority of the proposed accelerated dynamical approaches.  We use the ode45 function in MATLAB to solve the dynamical approaches in all our numerical experiments.

\subsection{In the smooth case} 
\begin{example} \textbf{Logistic regression} \cite{Attouch2021FastCO}: Consider the problem \eqref{P1} as follows:
	\begin{equation} \label{logregress}
		\begin{split}
			&\min  f\left( x \right) =\log \left( 1+\exp \left( -\left( 1,1,1,1 \right) ^Tx \right) \right) 
			\\
			&\mathrm{s}.\mathrm{t}. \,\,Ax=b, x\in \mathcal{X},
		\end{split}
	\end{equation}
	where $A=\left[ \begin{array}{c}
		0.2,1,1,2\\
		0,1,0.5,1\\
	\end{array} \right]$,\,\, $b=\mathrm{col}\left( 1,1\right)$, $\mathcal{X}=\left\{ x\in \mathbb{R}_+ ^4|\sum_{i=1}^4{x_i=1}\right\} $. The objective function is convex (but not strongly convex) and smooth, and it is a very popular regularization in machine learning. Applying APDMD \eqref{PDM_sm} with \textbf{Kullback-Leibler} divergence to address problem \eqref{logregress}. \eqref{fig:APDMD_exp} (left) shows the trajectories of estimates for $x\left(t\right)$ of APDMD \eqref{PDM_sm} versus time; \eqref{fig:APDMD_exp} (middle) and (right) display the error of $\left| f\left( x\left( t \right) \right) -f\left( x^* \right) \right|$ and $\left\| Ax\left( t \right) -b \right\|$ respectively. The numerical results of gaps for objective function and equation constraints are in excellent agreement with our theoretical results, i.e., they both converge at the predicted rates. 
\end{example}
\begin{figure}[!thbp]
	\includegraphics[width=5cm,height=4cm]{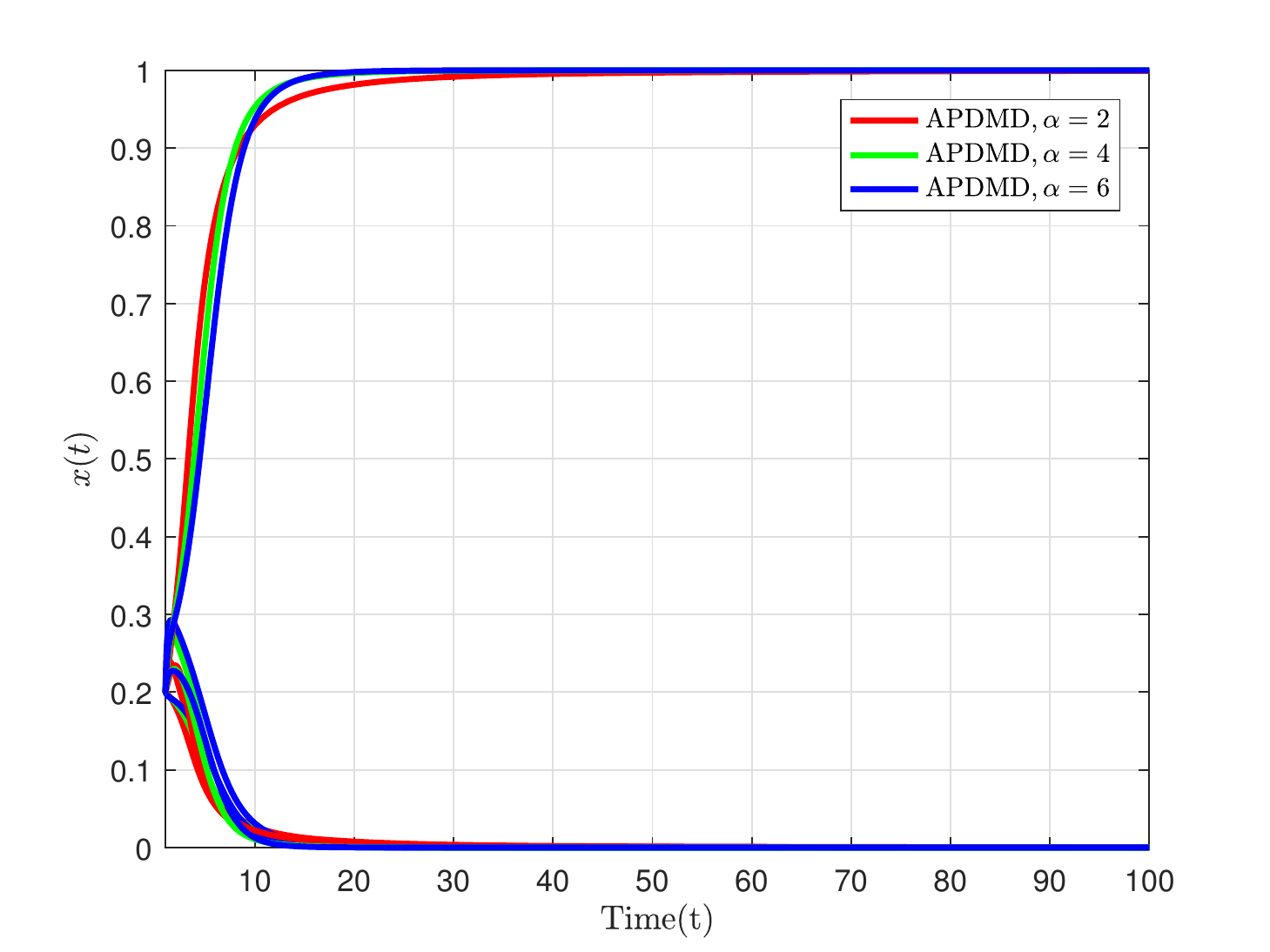}
	\includegraphics[width=5cm,height=4cm]{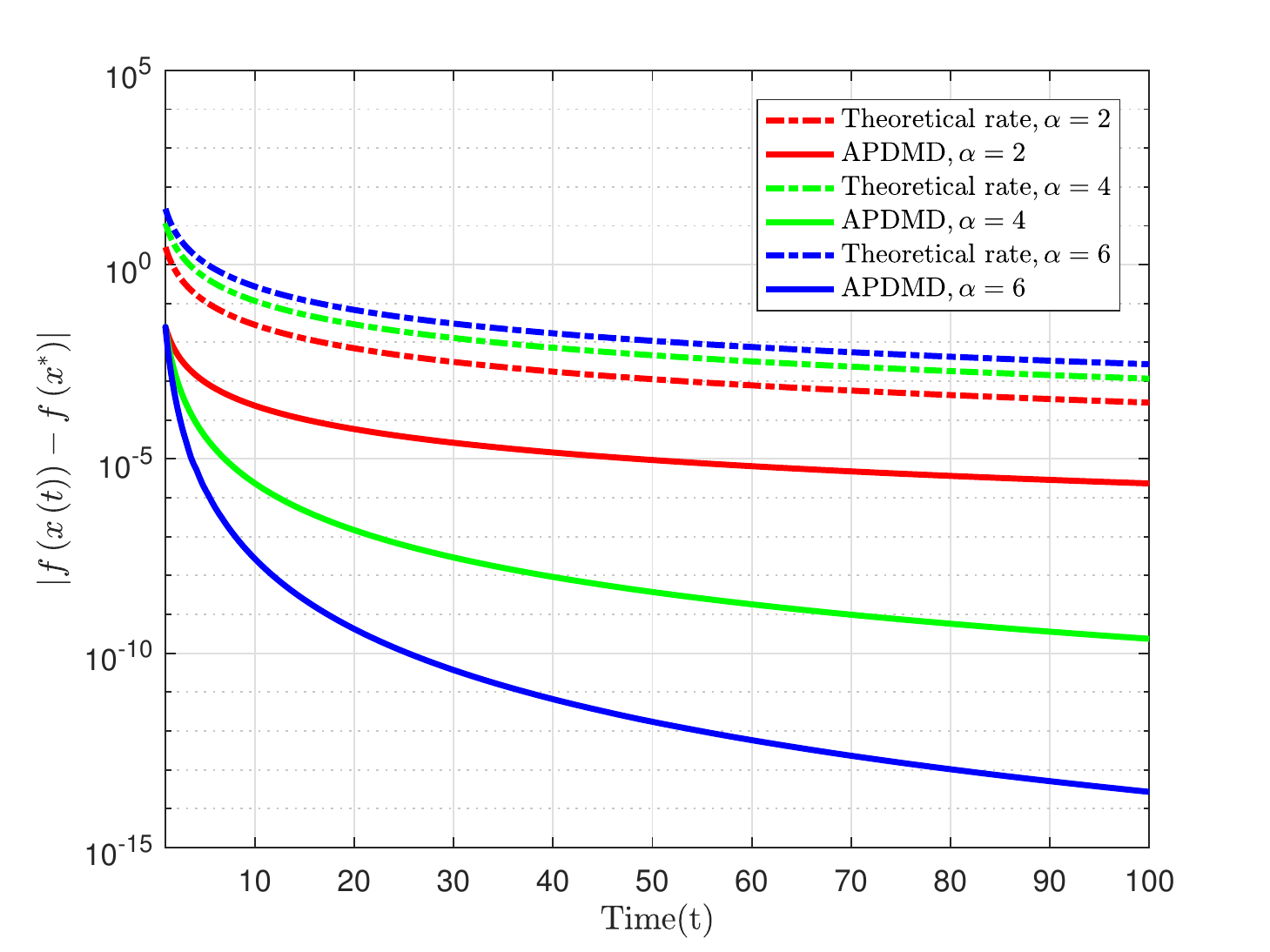}
	\includegraphics[width=5cm,height=4cm]{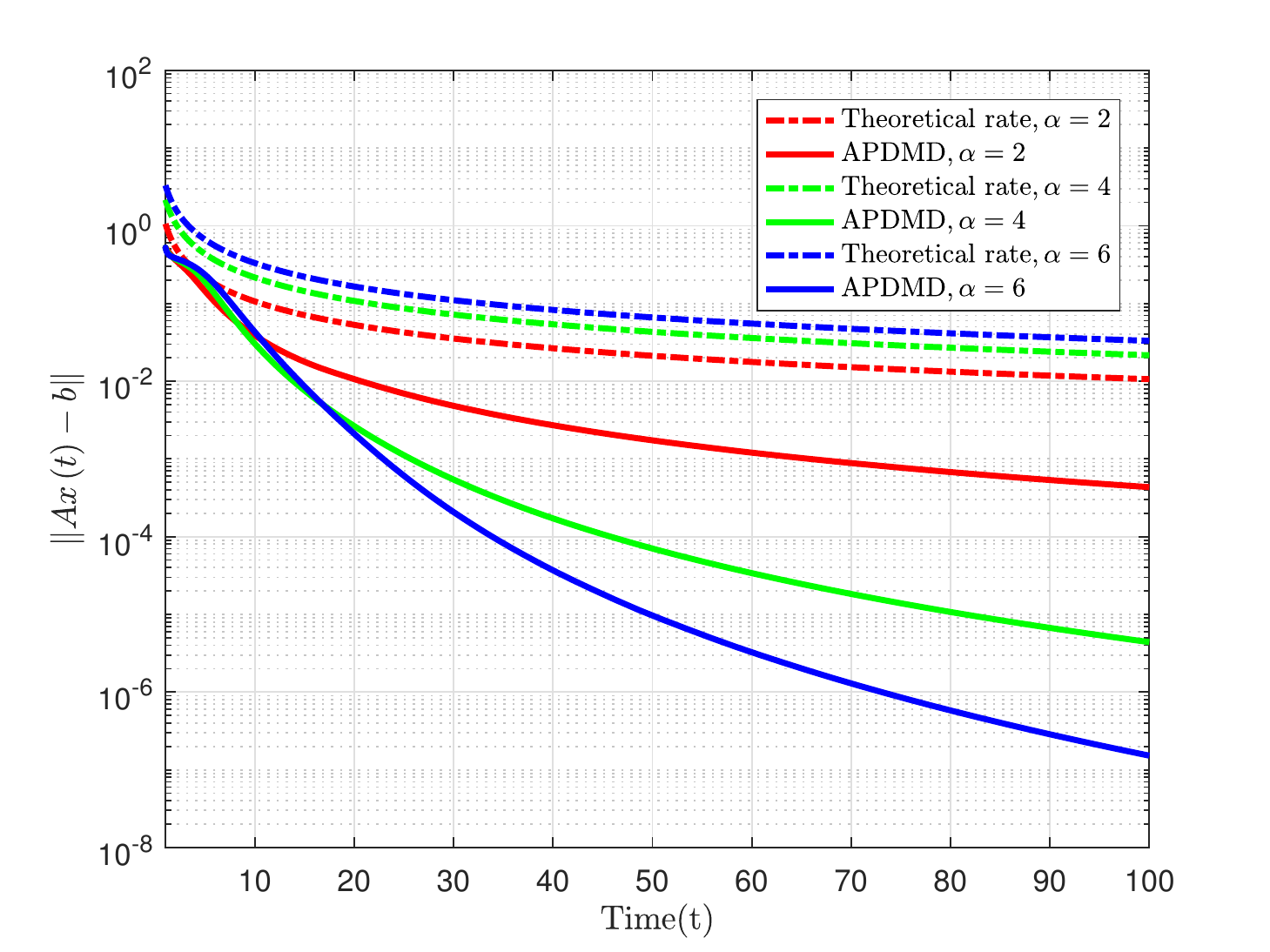}
	\caption{APDMD \eqref{PDM} with $\alpha =2,4,6$. (left) Trajectories of $x\left( t \right)$  for problem \eqref{logregress}. (middle) Error of $\left| f\left( x\left( t \right) \right)-f\left( x^* \right) \right|$ under APDMD \eqref{PDM_sm} for problem \eqref{logregress}. (right) Error of $\left\| Ax\left( t \right) -b \right\|$ under APDMD \eqref{PDM_sm} for problem \eqref{logregress}.}	\label{fig:APDMD_exp}
\end{figure}

\begin{example} \textbf{Distributed Logistic regression}: Consider a distributed problem \eqref{P2} as follows:
	\begin{equation} \label{dis_log}
		\begin{split}
			&\min f\left( x \right) =\sum_{i=1}^n{\log \left( 1+\exp \left( -\left( i-1,\frac{i}{2},i,i+1 \right) ^Tx_i \right) \right)}
			\\
			&\mathrm{s}.\mathrm{t}. \,\,Lx=0, \,\, x\in \prod_{i=1}^n{\mathcal{X} _i},
		\end{split}
	\end{equation}
	where $x=\left( x_1,...,x_n \right) ^T\in \mathbb{R} ^{4n}$. Setting $n=4$ and 
	$\mathcal{X} _1=\left\{ x_1\in \mathbb{R} _{+}^{4}|1^Tx=1 \right\}$, $\mathcal{X}_2=\left\{ x_2\in \mathbb{R} _{+}^{4} \right\}$, $
	\mathcal{X}_3=\left\{x_3\in \mathbb{R}^4|\left\|x_3-\left( 0.1,0.2,0.5,0.8 \right) ^T \right\| \leq 2 \right\}$ and  $\mathcal{X} _4=\left\{ x_4\in \mathbb{R} ^4|\left( 1,\right. \right. 
	\\
	\left. \left.1,1,1 \right) x_4\le 4 \right\} $. Note that the objective function in \eqref{dis_log} is convex (but not strongly convex) and smooth, which satisfies the requirement of problem \eqref{P2} in the smooth case. Applying ADPDMD \eqref{ADPDMD} with 4 agents (connected as a ring) to solve problem \eqref{dis_log}, i.e., for agent 1, using \textbf{Kullback-Leibler} divergence; for agent 2, applying \textbf{Itakura-Saito} divergence; for agent 3, adopting projection operator of \textbf{Sphere} set; for agent 4, utilizing projection operator of \textbf{half-space} set. 
	\eqref{fig:ADPDMD_exp2} (left) shows the trajectories of $x\left(t\right)$ under ADPDMD \eqref{ADPDMD} for problem \eqref{dis_log} are uniformly convergent and globally asymptotically stable, i.e., $x_1=x_2=...=x_5$; \eqref{fig:ADPDMD_exp2} (middle) and (right) display the errors of $\left| f\left( x\left( t \right) \right) -f\left( x^* \right) \right|$ and $\sqrt{x\left( t \right) ^TLx\left( t \right)}$, respectively. The numerical results show $\left| f\left( x\left( t \right) \right) -f\left( x^* \right) \right|$ and $\sqrt{x\left( t \right) ^TLx\left( t \right)}$ and $\sqrt{x\left( t \right) ^TLx\left( t \right)}$ all converge at the predicted rates.
\end{example}
\begin{figure}[!thbp]
	\includegraphics[width=5cm,height=4cm]{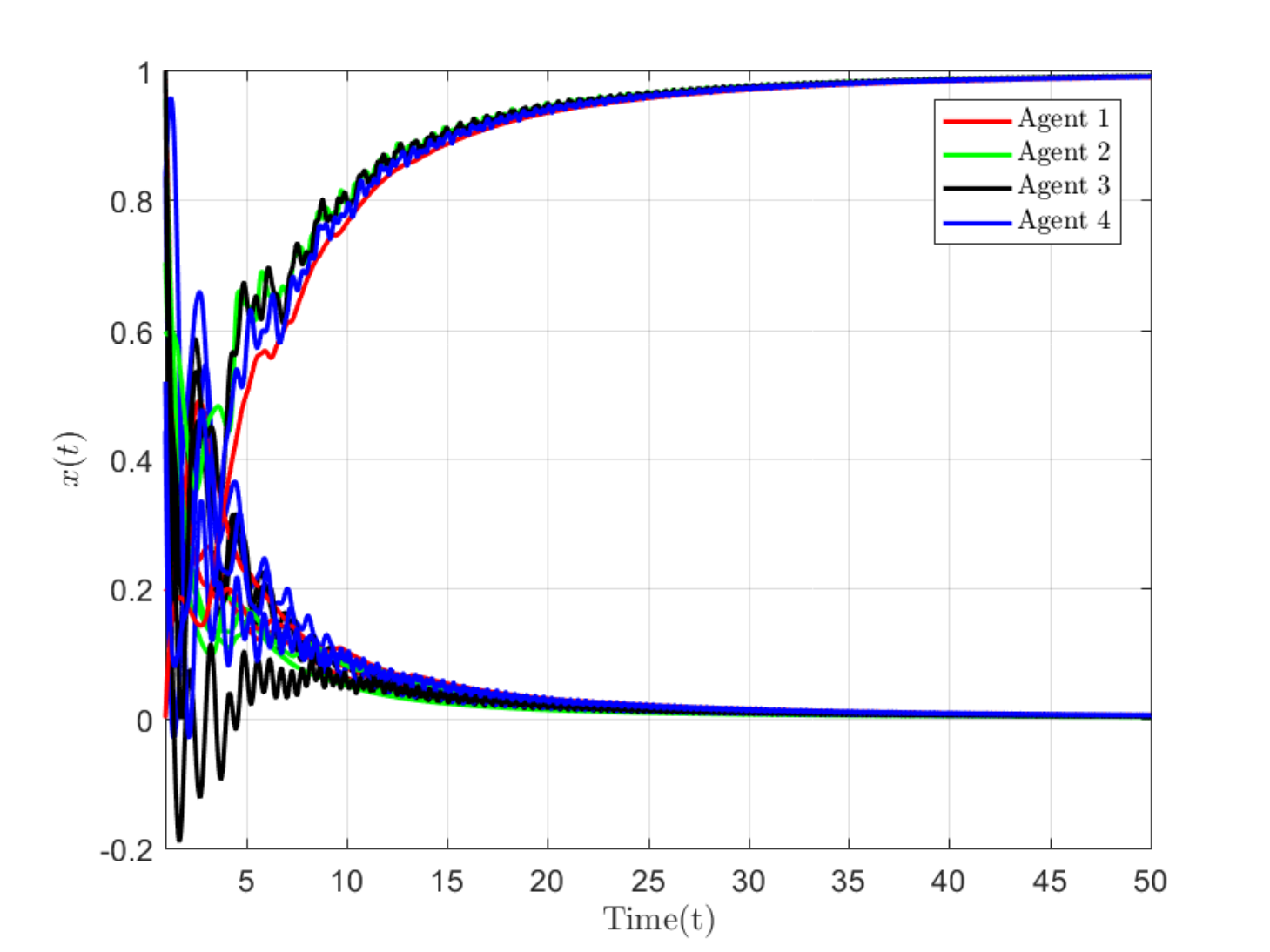}
	\includegraphics[width=5cm,height=4cm]{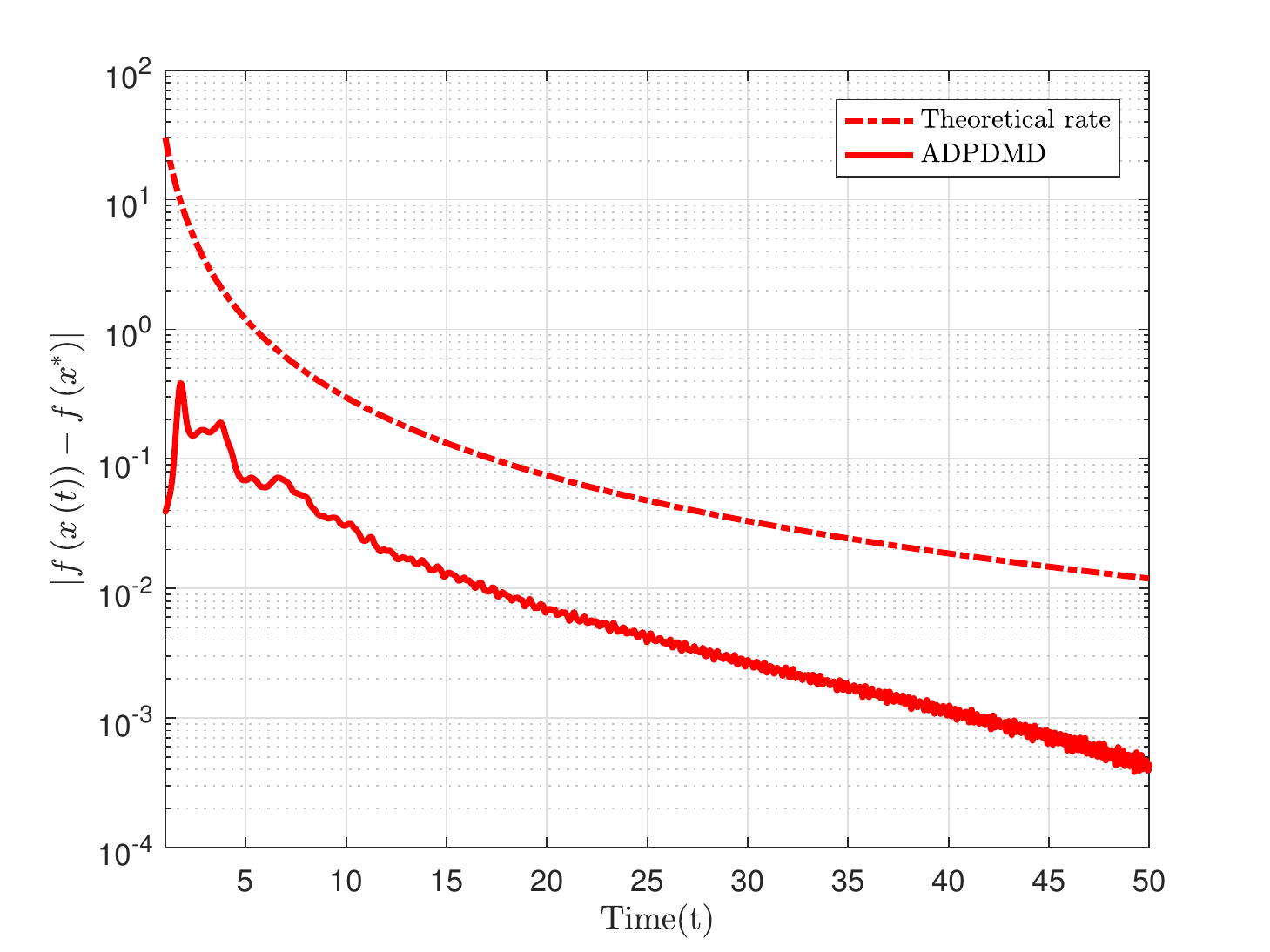}
	\includegraphics[width=5cm,height=4cm]{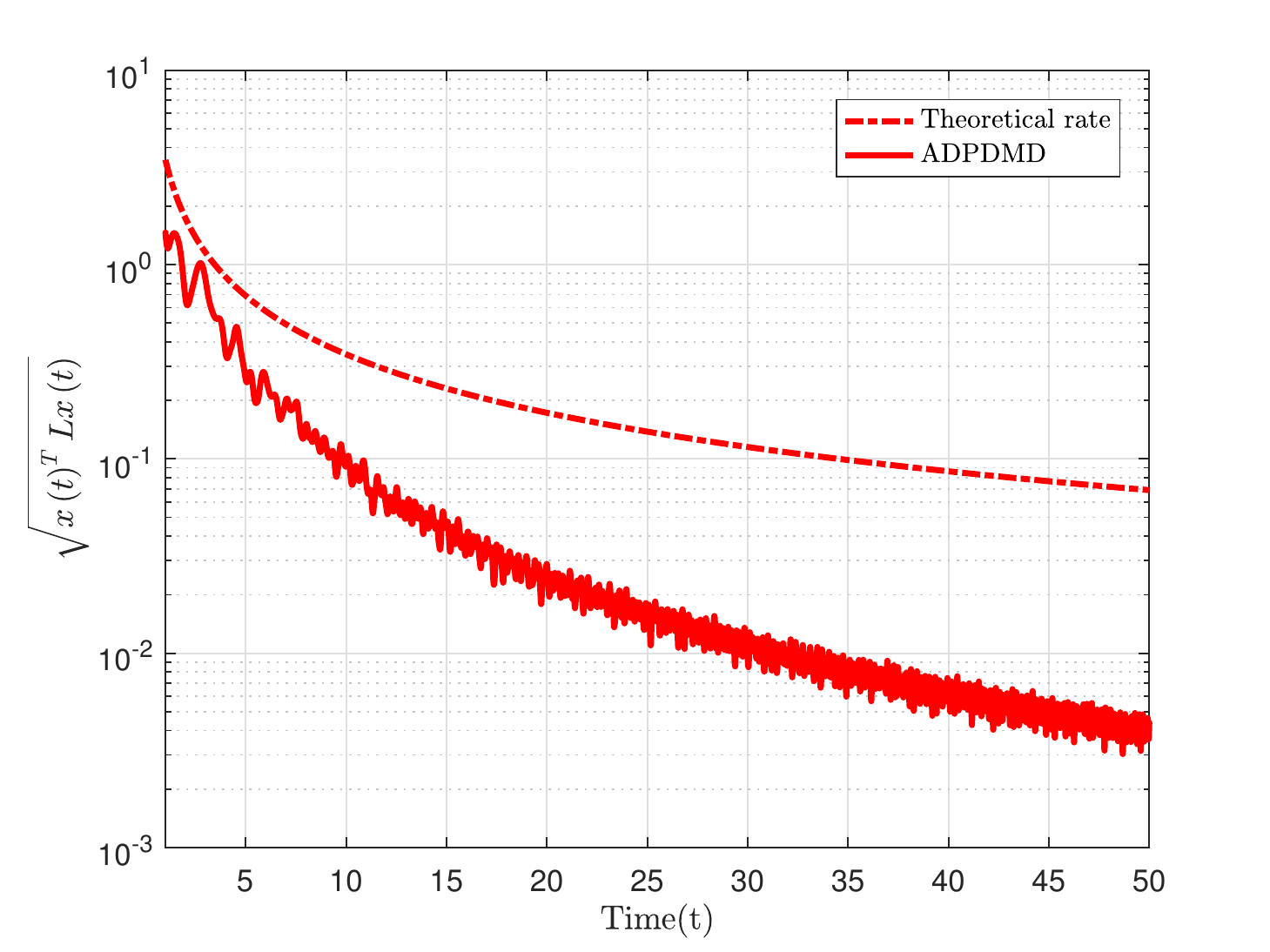}
	\caption{ADPDMD \eqref{ADPDMD} with $\alpha =3$. (left) Trajectories of $x\left( t \right)$  for problem \eqref{dis_log}. (middle) Error of $\left| f\left( x\left( t \right) \right)-f\left( x^* \right) \right|$ under ADPDMD \eqref{ADPDMD} for problem \eqref{dis_log}. (right) Error of$\sqrt{x\left( t \right) ^TLx\left( t \right)}$ under  ADPDMD \eqref{ADPDMD} for problem \eqref{dis_log}.}
	\label{fig:ADPDMD_exp2}
\end{figure}

\begin{example}\textbf{	Distributed quadratic programming}: Consider a special case of DEMO \eqref{P3} in the smooth case as follows:
	\begin{equation}\label{D_SP}
		\begin{split}
			&\min f\left( x \right) =\sum_{i=1}^n{x_{i}^{T}A_ix_i},
			\\
			&\mathrm{s}.\mathrm{t}. \,\,x+Ly=0, x\in \prod_{i=1}^n{\mathcal{X} _i},
		\end{split}
	\end{equation}
	where $\mathcal{X} _i=\left\{ x_i\in \mathbb{R} ^m| i+2\leq x_{i,j}\ll i+3, j=1,...,m \right\} 
	, i=1,...,n$, $b=\mathrm{col}\left( 7,...,7 \right)$$\in \mathbb{R} ^{nm}$ and $y\in \mathbb{R} ^{mn}$ is an auxiliary variable.  Let $n=10$, $m=5$ and $A_i\in \mathbb{R} ^{10\times 10}$ be a positive semi-definite matrix generated by standard Gaussian distribution. Let every part $x_i$ be an agent, and 10 agents are connected as a ring. Applying the ADMD \eqref{ADMD} to solve problem \eqref{D_SP} and the experimental results are shown in \eqref{fig:ADMD_exp3}. In \eqref{fig:ADMD_exp3} (left), the dual variable $\lambda$ is uniformly convergent and globally asymptotically stable, i.e.,  $\lambda_1=\lambda_2=...=\lambda_{10}$; \eqref{fig:ADMD_exp3} (middle and right) illustrate $\left| f\left( x^{\mu}\left( t \right) \right)-f\left( x^* \right) \right|$ and $\sqrt{\lambda \left( t \right) ^TL\lambda \left( t \right)}$ of ADMD \eqref{ADMD} are convergent at the predicted rates.
\end{example}
\begin{figure}[!thbp]
	\includegraphics[width=5cm,height=4cm]{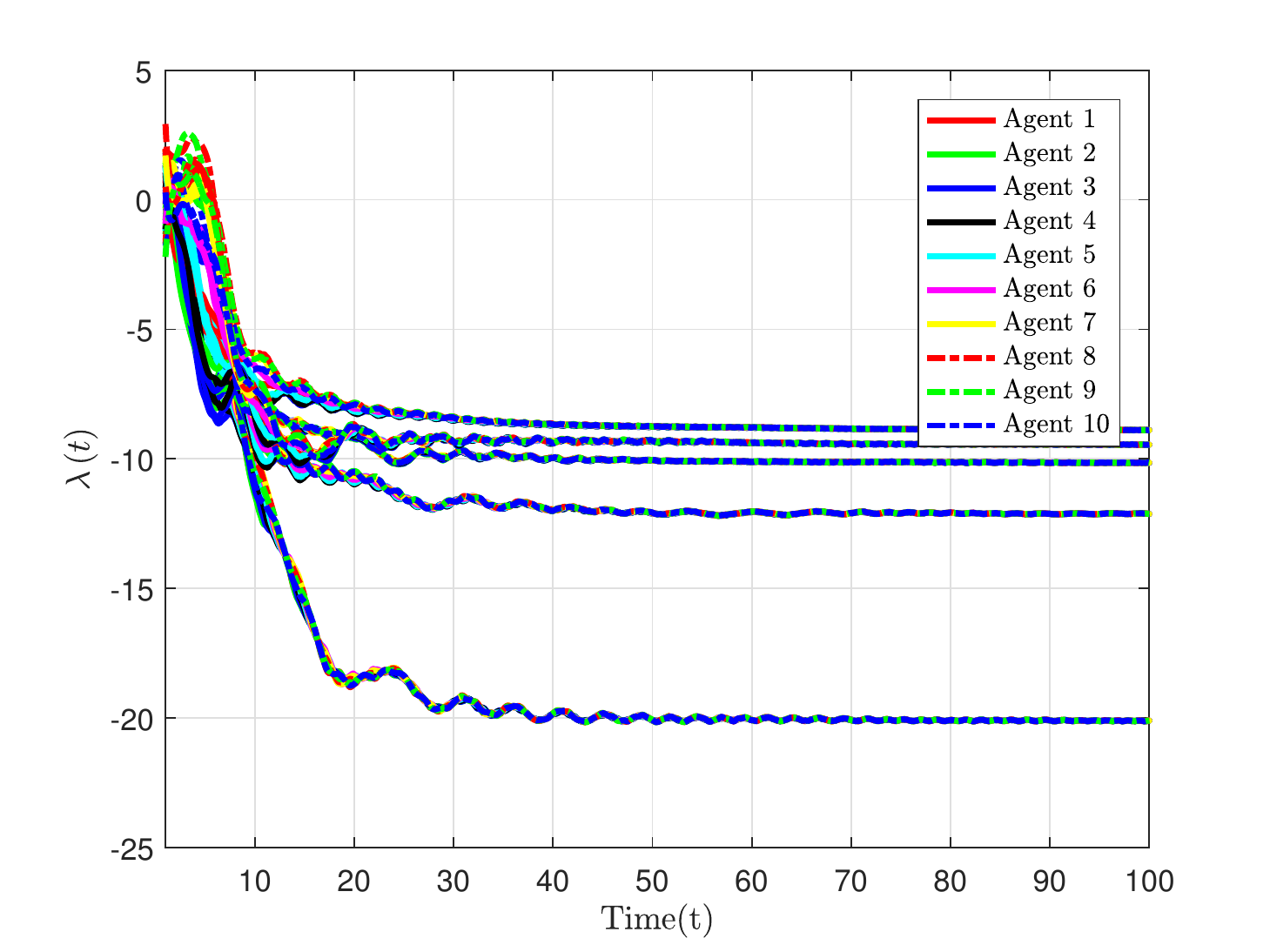}
	\includegraphics[width=5cm,height=4cm]{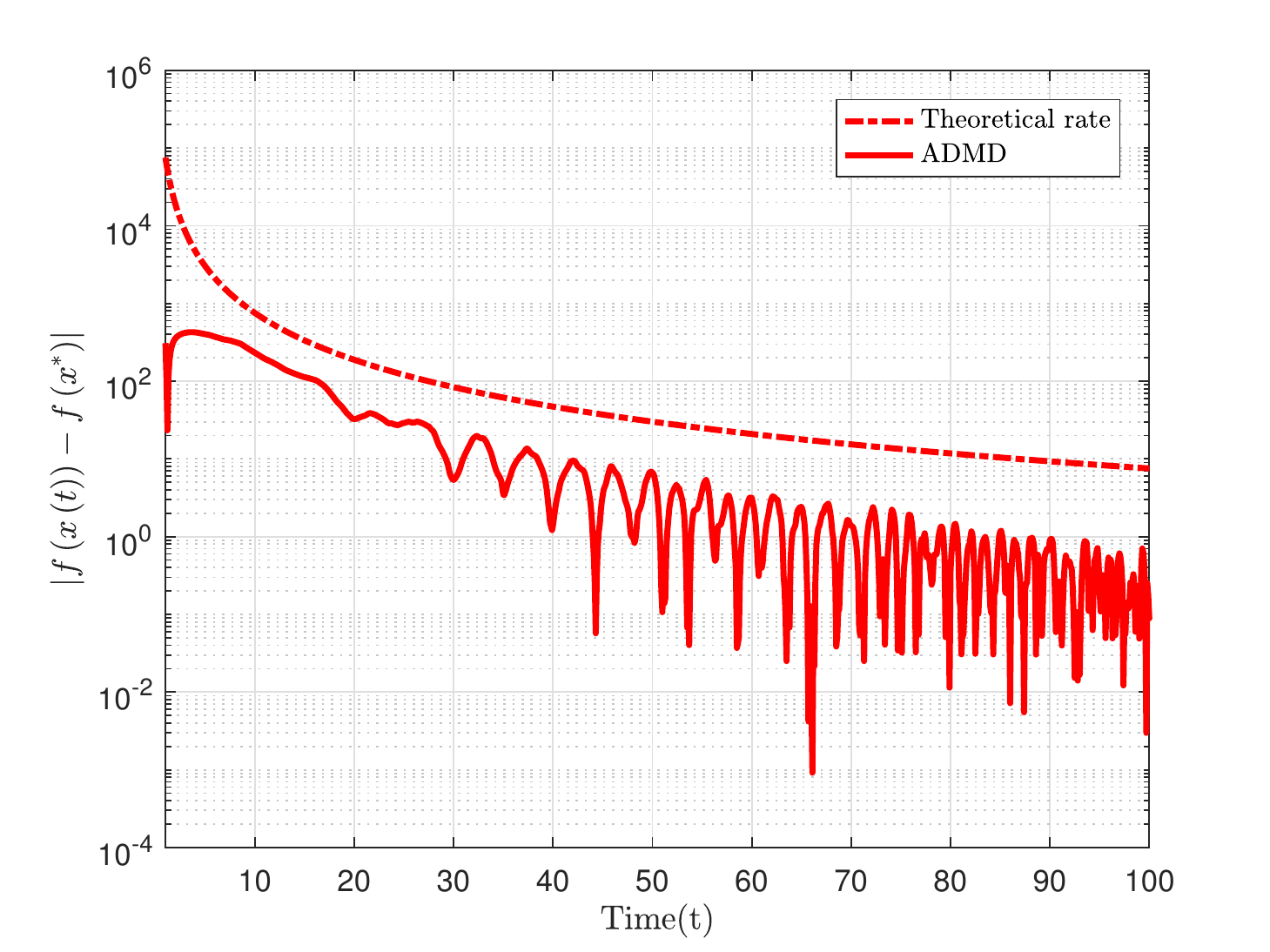}
	\includegraphics[width=5cm,height=4cm]{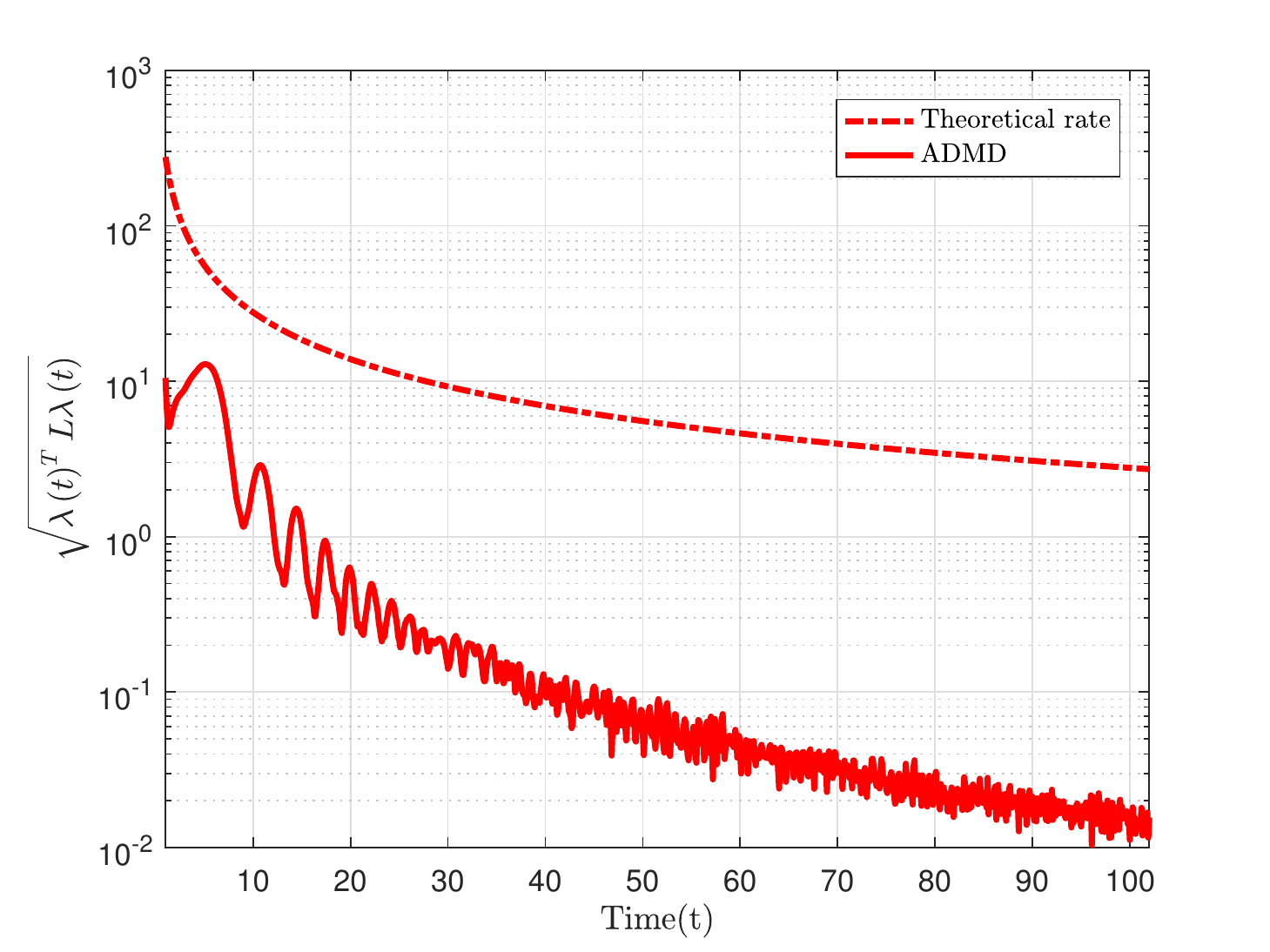}
	\caption {ADMD \eqref{ADMD} with with $\alpha =3$. (left) Trajectories of $\lambda$ of 10 agents. (middle) Error of $\left| f\left( x^{\mu}\left( t \right) \right) -f\left( x^* \right) \right|$ under ADMD for problem \eqref{D_SP}. (right) Error of $\sqrt{\lambda \left( t \right) ^TL\lambda \left( t \right)}$ under ADMD for problem \eqref{D_SP}.}
	\label{fig:ADMD_exp3}
\end{figure}		

\subsection{In the nonsmooth case} \label{non-experiment} 
\begin{example}\textbf{Nonnegative Basis Pursuit (NBP)} in \cite{khajehnejad2010sparse}:  Take into account a NBP as follows:
	\begin{equation} \label{NBP}
		\begin{split}
			&\min  f\left( x \right) =\left\| x \right\| _1,
			\\
			&\mathrm{s}.\mathrm{t}. \,\,Ax=b, x\in \mathcal{X},
		\end{split}
	\end{equation}
	where $\mathcal{X} =\left\{ x\in \mathbb{R} ^{40}|,x_i\geq 0,i=1,...,40 \right\}$ and $A\in \mathbb{R} ^{10\times 40}$ is an orthogonal Gaussian matrix. The objective function in \eqref{NBP} is convex (but not strongly convex) and nonsmooth as required. Applying the SAPDMD \eqref{PDM_sm} and let $\psi \left( x \right) =\sum_{i=1}^n{x_i\ln x_i}$ to solve problem \eqref{NBP}. \eqref{fig:APDMD_exp2} displays the reconstructed sparse signal of $x$ in the left, the error of $\left| f\left( x^{\mu}\left( t \right) \right) -f\left( x^* \right) \right|$ in the middle, and the error of $\left\| Ax^{\mu}\left( t \right) -b \right\|$ in the right. All parameters are complied with the requirement. The numerical results are in good accordance with our theoretical results, where the $\left| f\left( x^{\mu}\left( t \right) \right) -f\left( x^* \right) \right|$ and $\left\| Ax^{\mu}\left( t \right) -b \right\|$ are convergent at the predicted rates.
\end{example}	
\begin{figure}[!thbp]
	\includegraphics[width=5cm,height=4cm]{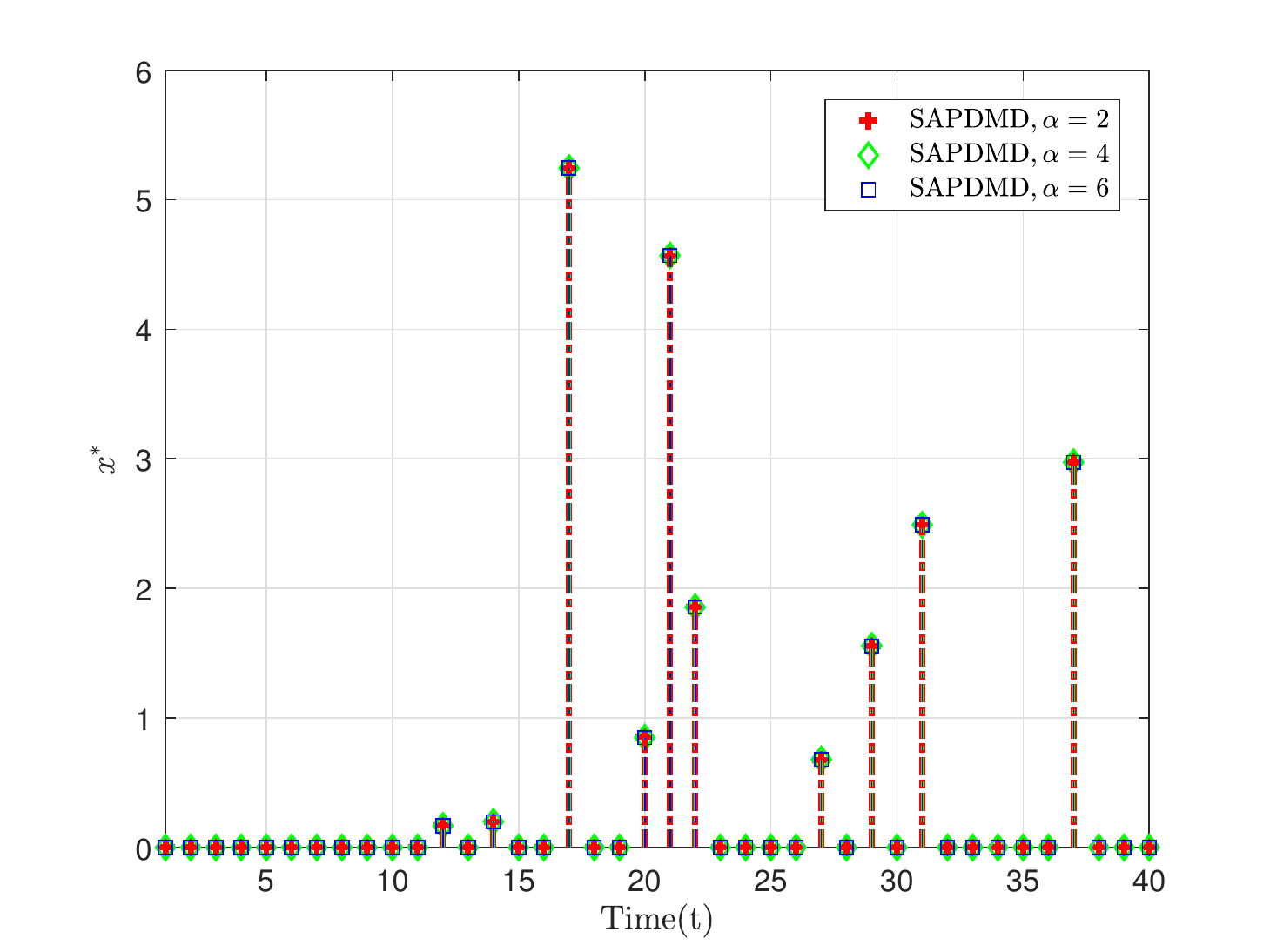}
	\includegraphics[width=5cm,height=4cm]{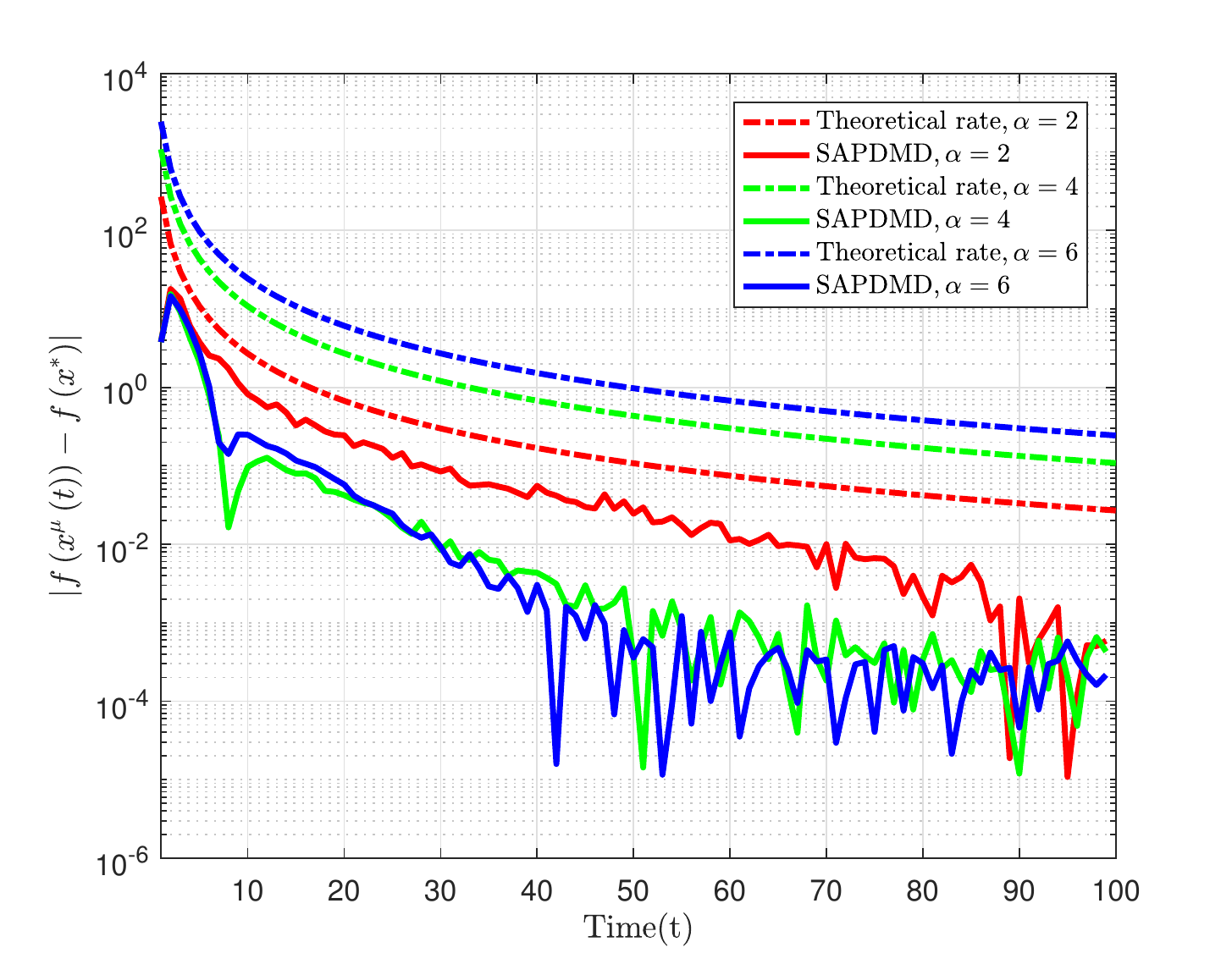}
	\includegraphics[width=5cm,height=4cm]{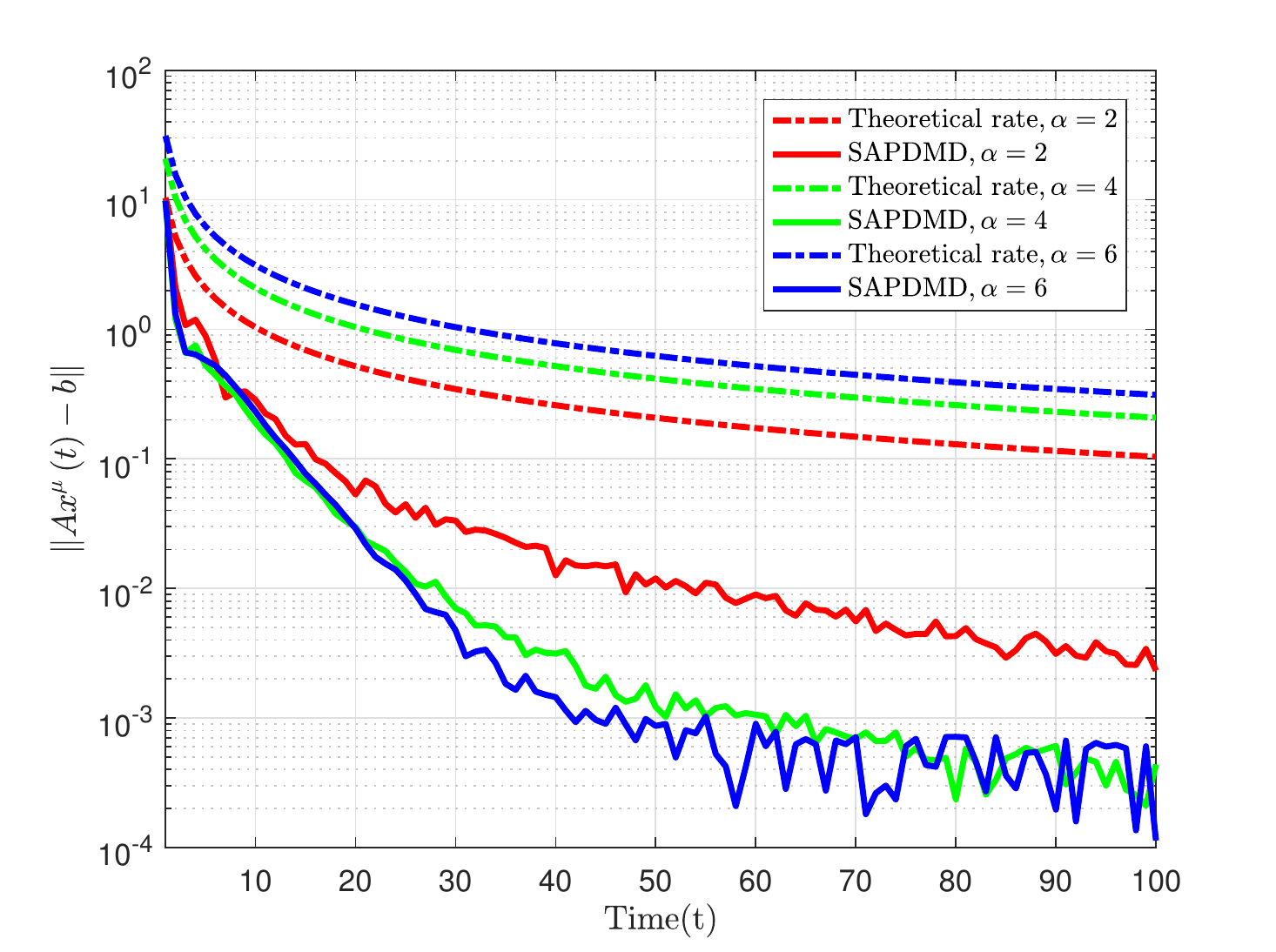}
	\caption{SAPDMD \eqref{PDM_sm} with $\alpha =2,4,6$. (left) Reconstructed sparse signal for problem \eqref{NBP}. (middle) Error of $\left| f\left( x^{\mu}\left( t \right) \right)-f\left( x^* \right) \right|$ for problem \eqref{NBP}. (right) Error of $\left\| Ax^{\mu}\left( t \right) -b \right\|$ for problem \eqref{NBP}.}
	\label{fig:APDMD_exp2}
\end{figure}

\begin{example}\textbf{Distributed basis pursuit in row partition} in \cite{Zhao2021CentralizedAC}: Take a distributed basis pursuit with row partition of sensing matrix as follows:
	\begin{equation}\label{D_BP_R}
		\begin{split}
			&\min  f\left( x \right) =\sum_{i=1}^k{\left\| x_i \right\| _1}
			\\
			&\mathrm{s}.\mathrm{t}.\,Lx=0, x\in \mathcal{X},
		\end{split}
	\end{equation}
	where  $L=L_k\otimes I_n\in \mathbb{R} ^{kn\times kn}$, $\mathcal{X} =\left\{ x\in \mathbb{R} ^{kn}|\hat{A}x=b,\hat{A}=\mathrm{bl}\mathrm{diag}\left\{A_{m_1\times n},...,A_{m_k\times n} \right\} \right.$
	\\
	$\left. \in \mathbb{R} ^{m\times kn},b\in \mathbb{R} ^m \right\} $, $b\in \mathbb{R} ^m$. Note that, the problem \eqref{D_BP_R} is convex (but not strongly convex) and nonsmooth and as a special case of DCCP \eqref{P2} in the nonsmooth case. Setting $m=10$, $k=5$, $n=60$, sparsity be $2$ and every $A_{m_i\times n}, i=1,...,5$  acts as an agent, and 5 agents are connected as a ring. Applying the SADPDMD \eqref{SADPDMD} with $\nabla \psi ^*=P_{\mathcal{X}}$ of affine set $\mathcal{X} =\left\{ \begin{array}{c}x\in \mathbb{R} ^{kn}|\hat{A}x=b\\
	\end{array} \right\} $ in \eqref{proj} to solve the problem \eqref{D_BP_R} and all parameters  comply with the requirement. The numerical results are displayed in \eqref{fig:SDAPDMD_exp}. As can be seen from \eqref{fig:SDAPDMD_exp} (left) that the variable $x$ is uniformly convergent and globally asymptotically stable, i.e., $x^{\mu}_1=x^{\mu}_2=...=x^{\mu}_5$; \eqref{fig:APDMD_exp2} (middle) shows that the SADPDMD can efficiently solve the problem \eqref{D_BP_R} and reconstruct the original sparse signal; the result in  \eqref{fig:SDAPDMD_exp} (middle) is in good accordance with our theoretical results, i.e., $\left| f\left( x^{\mu}\left( t \right) \right)-f\left( x^* \right) \right|$ is convergent at the predicted rates.
\end{example}
\begin{figure}[!thbp]
	\includegraphics[width=5cm,height=4cm]{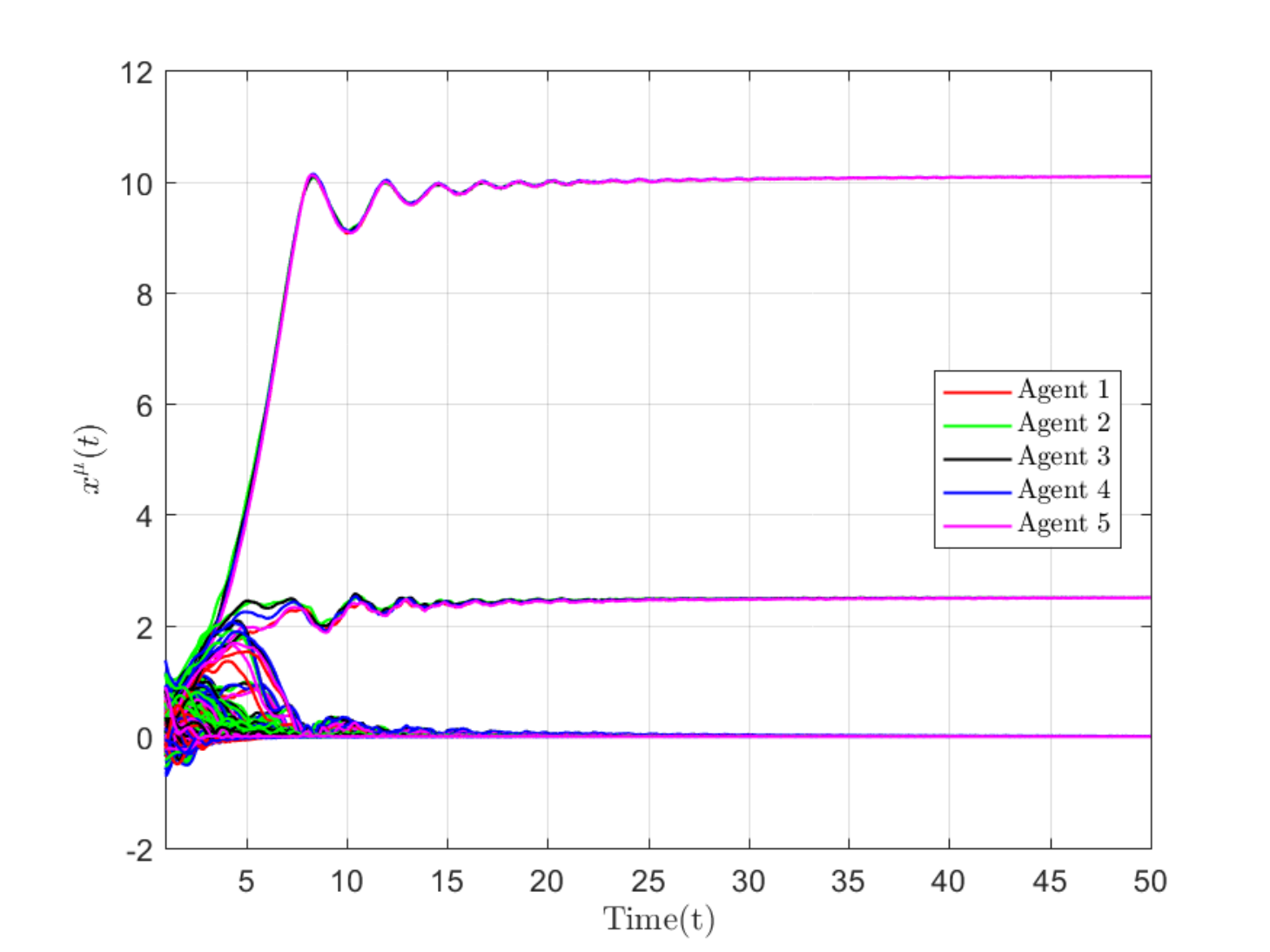}
	\includegraphics[width=5cm,height=4cm]{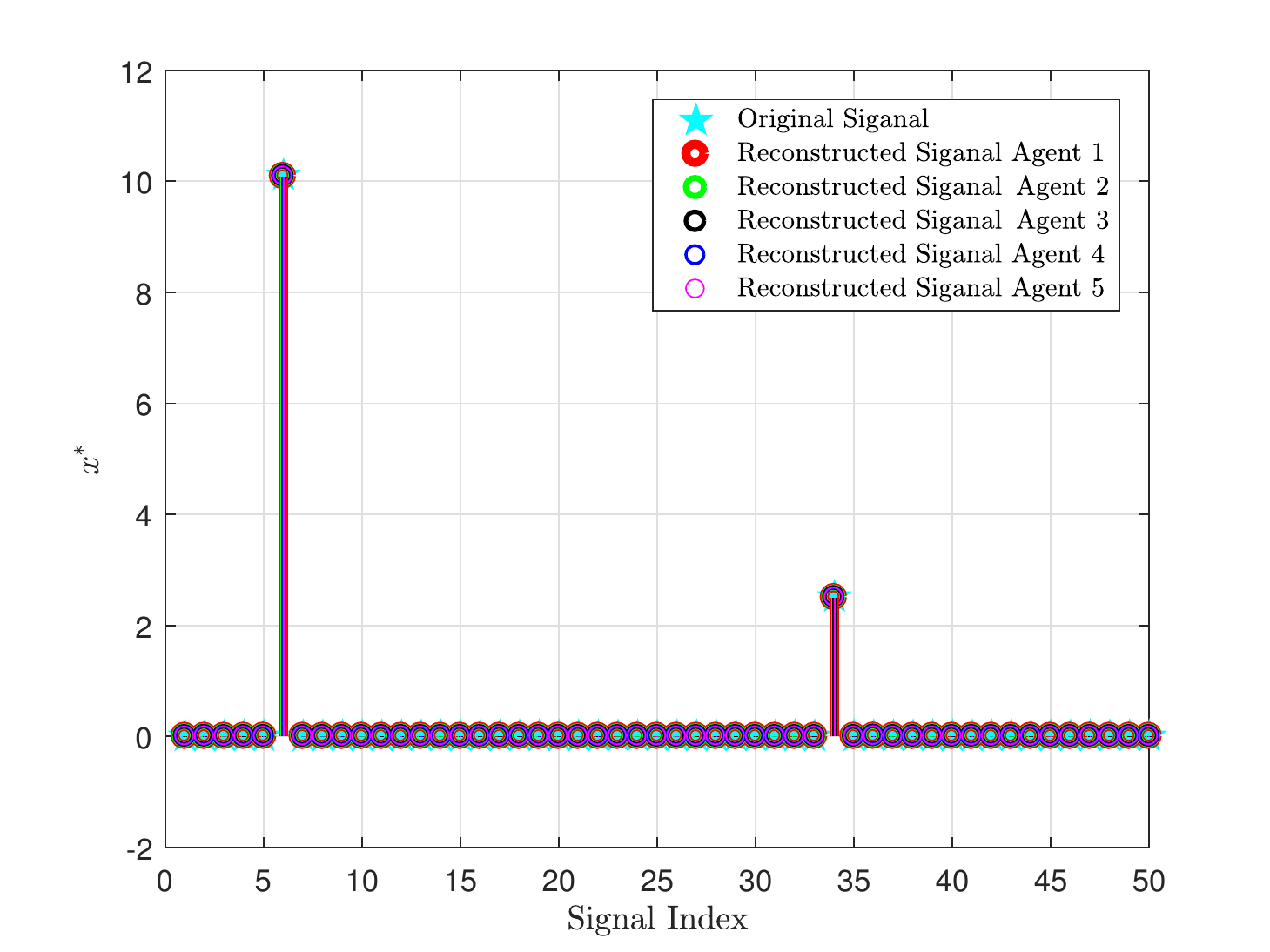}
	\includegraphics[width=5cm,height=4cm]{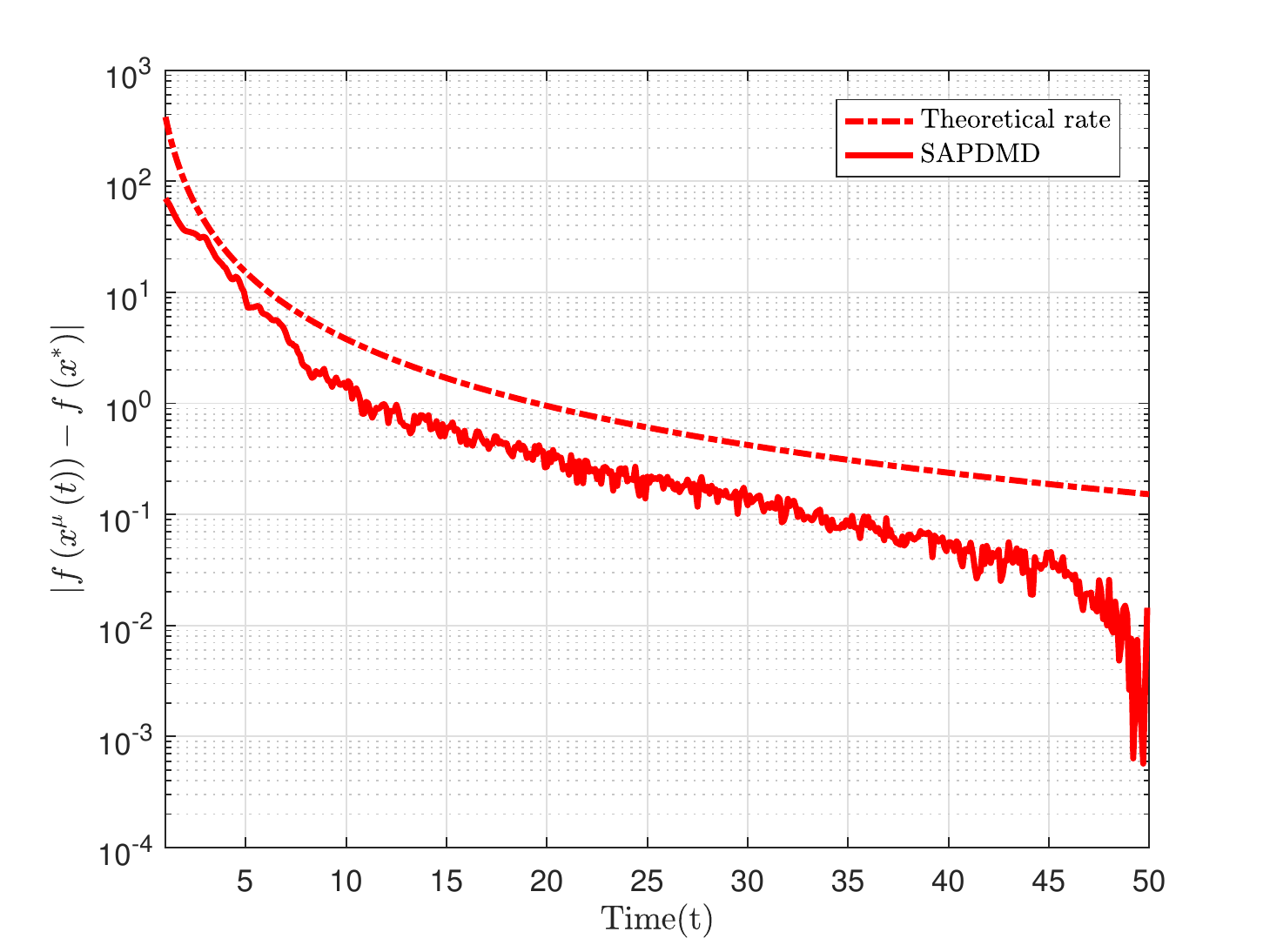}
	\caption{SADPDMD \eqref{SADPDMD} with with $\alpha =3$. (left) Trajectories of $x$ in 5 agents. (middle) Reconstructed sparse signal of 5 agents. (right)  Error of $\left| f\left( x^{\mu}\left( t \right) \right) -f\left( x^* \right) \right|$.}
	\label{fig:SDAPDMD_exp}
\end{figure}

\begin{example}\textbf{Distributed basis pursuit in column partition} in \cite{Zhao2021CentralizedAC}: Consider a distributed basis pursuit with column partition of sensing matrix as follows:
	\begin{equation}\label{D_BP_C}
		\begin{split}
			&\min  f\left( x \right) =\sum_{i=1}^q{\left\| x_i \right\| _1},
			\\
			&\mathrm{s}.\mathrm{t}. \,\,\bar{A}x+Ly=b,
		\end{split}
	\end{equation}
	where $\bar{A}=\mathrm{bl}\mathrm{diag}\left\{ A_{m\times n_1},...,A_{m\times n_q} \right\} \in \mathbb{R} ^{mq\times n}, \sum_{i=1}^q{n_q}=n$, and $y\in \mathbb{R} ^{mq}$ is an auxiliary variable. Note that, the problem \eqref{D_BP_C} is convex (but not strongly convex) and nonsmooth, which is a special case of DEMO \eqref{P3}. Letting $q=10$, and  $A\in \mathbb{R} ^{10\times 60}$, then $\hat{A}\in \mathbb{R}^{100\times 60}$. Let every
	part $A_{m\times n_i}, i=1,...,10$ be an agent, and 10 agents are connected as a ring. Applying the ADMD \eqref{SADMD} to solve the problem \eqref{D_BP_C} and the experimental results are shown in \eqref{fig:DENMO_nonsmooth_exp}. \eqref{fig:DENMO_nonsmooth_exp} (left) displays the dual variable $\lambda$ is uniformly convergent and globally asymptotically stable, i.e.,  $\lambda^{\mu} _1=\lambda^{\mu} _2=...=\lambda^{\mu} _{10}$; \eqref{fig:DENMO_nonsmooth_exp} (middle) describes that the SADMD
	\eqref{SADMD} is able to efficiently solve the problem\eqref{D_BP_C} and recover the sparse signal in a distributed way;  \eqref{fig:DENMO_nonsmooth_exp} (right) illustrates $\left| f\left( x^{\mu}\left( t \right) \right)-f\left( x^* \right) \right|$ of SADMD \eqref{SADMD} converges at the predicted rates.
\end{example}
\begin{figure}[!thbp]
	\includegraphics[width=5cm,height=4cm]{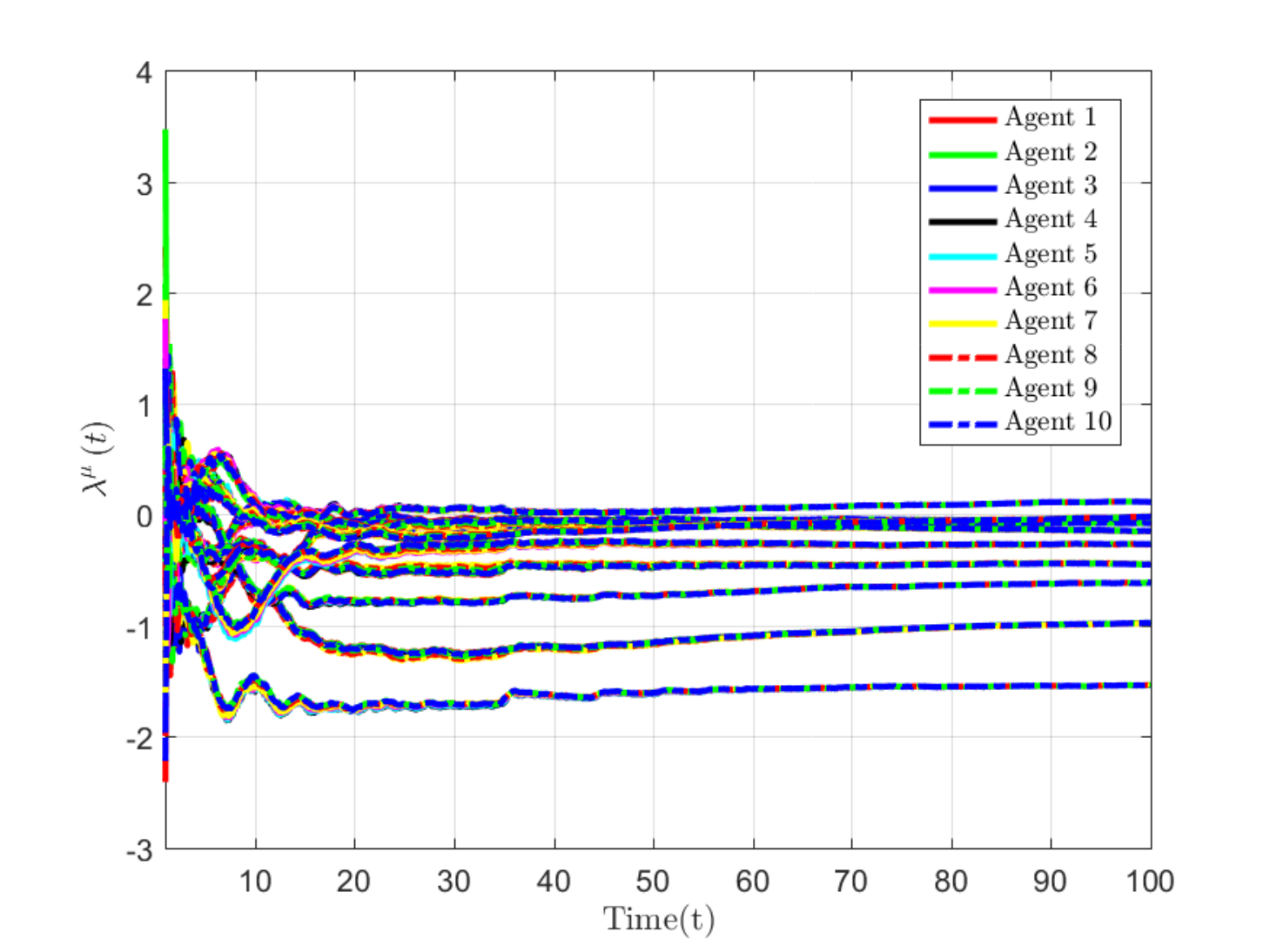}
	\includegraphics[width=5cm,height=4cm]{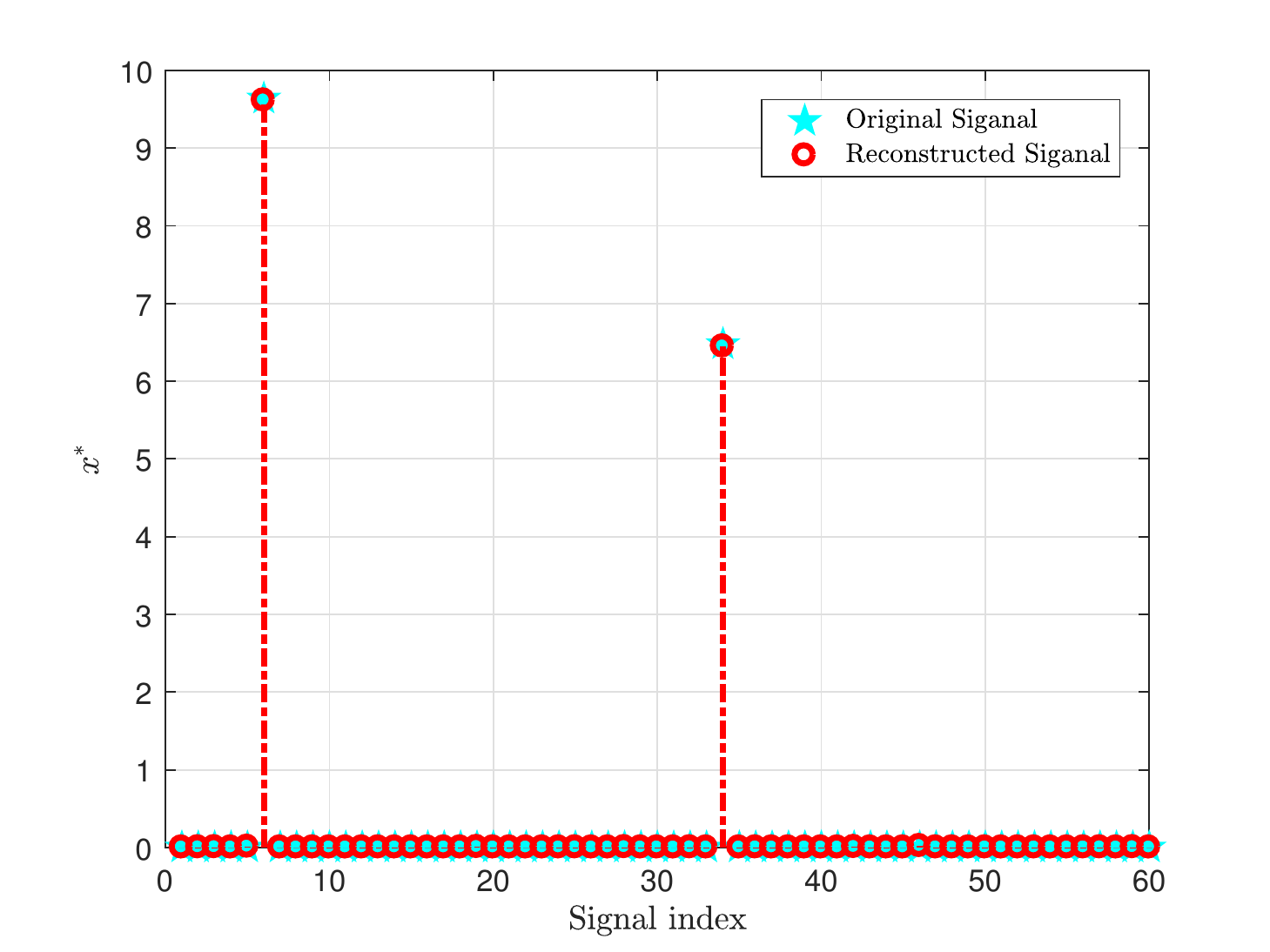}
	\includegraphics[width=5cm,height=4cm]{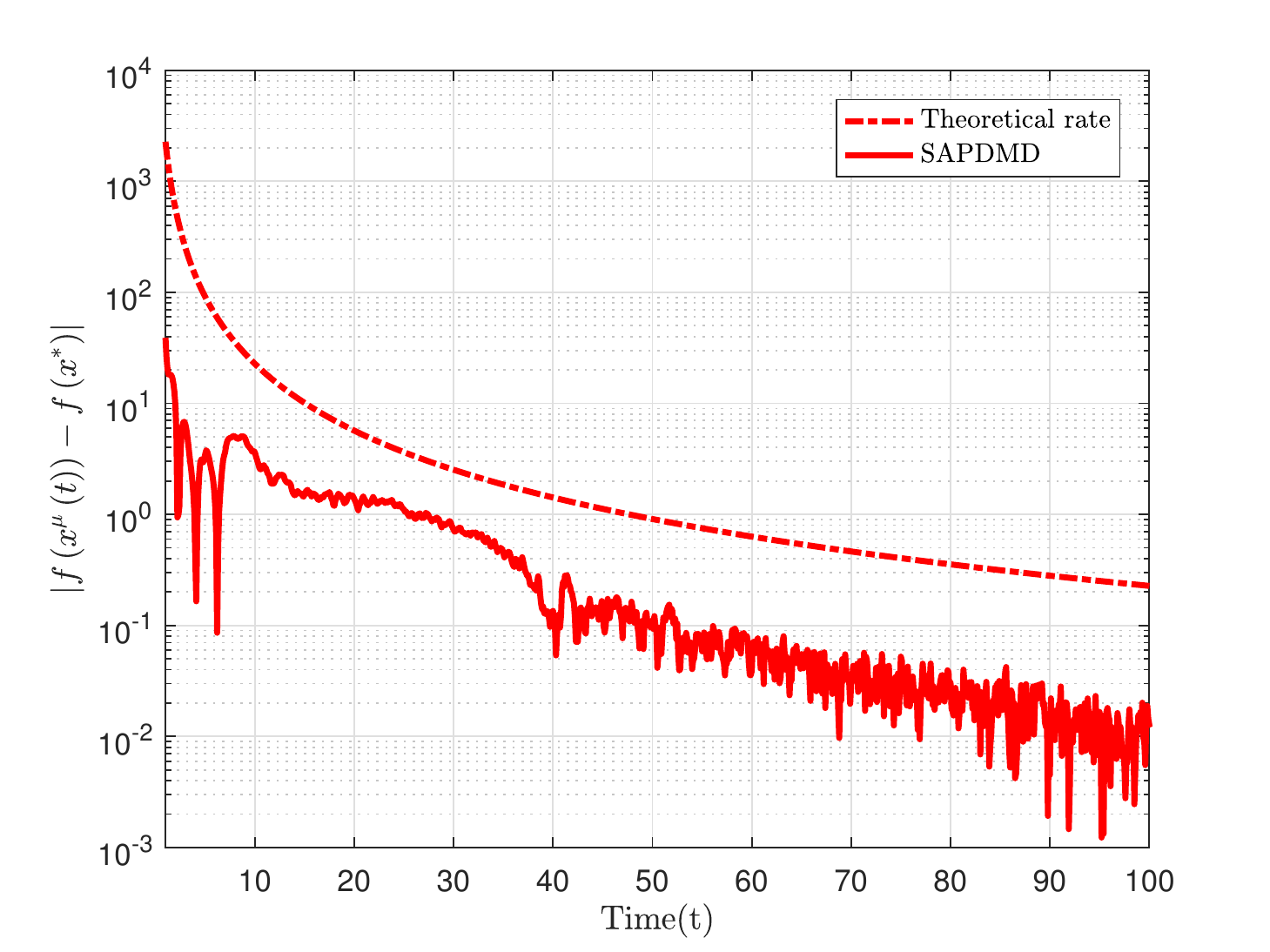}
	\caption {SADMD \eqref{SADMD} with with $\alpha =3$. (left) Trajectories of $\lambda$ of 10 agents. (middle)  Reconstructed sparse signal. (right) Error of $\left| f\left( x^{\mu}\left( t \right) \right) -f\left( x^* \right) \right|$.}
	\label{fig:DENMO_nonsmooth_exp}
\end{figure}

%
%


\acks{This work was supported in part by the National Key R$\&$D Program of China (No. 2018AAA0100101), in part by National Natural Science Foundation of China (Grant no. 61932006, 61772434) and in part by the Fundamental Research Funds for the Central Universities (Project No. XDJK2020TY003).}


%



%
%
%

\vskip 0.2in
\bibliography{sample}

\end{document}